\numberwithin{equation}{section}
\newcommand{\R}{\mathbb R}
\def\E{\mathbb E}
\def\XXint#1#2#3{{\setbox0=\hbox{$#1{#2#3}{\int}$}
\vcenter{\hbox{$#2#3$}}\kern-.5\wd0}}
\numberwithin{equation}{section}
\newtheorem{thm}{Theorem}[section]
\newtheorem{lem}[thm]{Lemma}
\newtheorem{prop}[thm]{Proposition}
\newtheorem{assumption}[thm]{Assumption}
\theoremstyle{definition}
\newtheorem{defn}[thm]{Definition}
\newtheorem{rmk}[thm]{Remark}
\def\smallnegint{\mathop{\int\mkern-13mu
        \raise.5ex\hbox{${\scriptscriptstyle\diagup}$}}\nolimits}
\def\bx{{\bf x}}
\def\by{{\bf y}}
\def\ssetminus{\,\raise.4ex\hbox{$\scriptstyle\setminus$}\,}
\newcommand{\be}{\begin{equation}}
\newcommand{\ee}{\end{equation}}
\def\dive{{\rm div}}
\renewcommand{\bar}{\overline}
\renewcommand{\tilde}{\widetilde}
\renewcommand{\hat}{\widehat}
\begin{document}
\title[Approximately optimal distributed controls]{Approximately optimal distributed controls for high-dimensional stochastic systems with pairwise interaction through controls}

\author{Elise Devey}
\thanks{INRIA Paris,
48 rue Barrault, CS 61534,
75647 Paris Cedex, Université Paris-Dauphine, elise.devey@inria.fr.\\ The author is grateful to Pierre Cardaliaguet for his valuable and insightful contributions to this work.}
\subjclass[2000]{93E20,49N80}
\begin{abstract}
This paper investigates large-population stochastic control problems in which agents share their state information and cooperate to minimize a convex cost functional. The latter is decomposed into individual and coupling costs, with the distinctive feature that the coupling term is a pairwise interaction function between the controls. To address this setting, we follow closely \cite{jackson2023approximatelyoptimaldistributedstochastic}: we introduce a related problem where each agent observes only its own state. We then establish a quantitative bound on the difference between the value functions associated with these two problems. We obtain this result by reformulating the problems analytically as Hamilton–Jacobi type equations and comparing their associated Hamiltonians. The main difficulty of our approach lies in establishing a precise comparison between the distributions of the corresponding optimal controls.
\end{abstract}

\maketitle      %% SHORT TITLE FOR RUNNING HEAD

%\tableofcontents

\section{Introduction}
Understanding large populations and complex systems is a central challenge in mathematical modeling, with many real-world applications, from social networks and power grid infrastructures to financial markets. In particular, many optimization problems can be described as high-dimensional stochastic optimal control problems in which many agents cooperate to minimize a cost functional.\\

In the context of our paper, we consider the following mathematical setup, directly inspired by the problems of managing the flexibility of the electrical grid. The main motivation is to extend existing high-dimensional stochastic control problems, such as coordinating flexible electrical appliances to support power system operation (see \cite{de2019mean,Seguret_2023}), by incorporating constraints from the electricity network.   For a time horizon $T >0\,,$ each of \( N \) agents \( i = 1, \ldots, N \) controls a state process \( X^i_t \), with values in \( \mathbb{R}^d \), governed by the stochastic dynamics
\begin{equation}\label{eq:1.1}
dX^i_t = \alpha^i(t, {\bf X_t}) \, dt + dW^i_t.
\end{equation}
Here, \( W^1, \ldots, W^N \) are independent Brownian motions, and \( \alpha = (\alpha^1, \ldots, \alpha^N) \) is a Markovian (feedback) control which depends on the entire vector \( X_t = (X^1_t, \ldots, X^N_t) \) of states. That is, \( \alpha \) belongs to the set of full-information controls, defined as follows.\\
\textbf{Full-information controls:} \(\mathcal{A}\) denotes the set of \(\alpha = (\alpha^1, \ldots, \alpha^N)\), where 
\[
\alpha^i : [0,T] \times (\mathbb{R}^d)^N \to \mathbb{R}^d
\]
is measurable for each \(i\), and the SDE \eqref{eq:1.1} admits a unique strong solution.\\
Therefore, the Full-Information Problem is a minimization of a convex functional and can be formulated as follows.
\[
\mathcal{P} : \inf_{\alpha \in \mathcal{A}} J_0(\alpha)\]
where, \[\quad
J_0(\alpha)=\E\biggl[\int_0^T \biggl(\frac{1}{2N}\sum_i \vert \alpha^i(t,\boldsymbol{X^{\alpha}_t} )\vert^2 
    +
    f^N(\boldsymbol{\alpha}(t,\boldsymbol{X^{\alpha}_t}))\biggr)dt+g^N(\boldsymbol{X^{\alpha}_T})\biggr]\,.\]

Notice that the presence of the interaction function $f^N$
 makes it impossible to solve each agent’s problem separately, as their behaviors are interdependent.\\
However, depending on the structure of $f^N$, one can argue that when the number of agents $N$ is large, finding the optimal control by treating each agent independently can provide a good approximation. The objective of this paper is to quantify the resulting loss in the cost function associated with the transition from centralized to decentralized optimization.\\

 \textbf{Introduction to Mean Field Games/Control and their Extensions}\\
 Analyzing Pareto optimality directly in N-player models is notoriously difficult because of their inherent complexity, and a similar difficulty arises in competitive control problems focused on Nash equilibria. To address these challenges, the theories of Mean Field Games (MFGs) and Mean Field Control (MFC) emerged about twenty years ago, targeting competitive and cooperative settings, respectively, see \cite{huang2006large, Lasry2007MeanFG}. Inspired by statistical physics research, these frameworks aim to describe the asymptotic behavior of Nash equilibria or Pareto optima in large populations of symmetric stochastic differential games as the number of agents tends to infinity. More precisely, these frameworks apply to exchangeable systems, that is, systems of symmetric agents whose interactions with others occur only through the empirical distribution of their state variables.
 In the asymptotic regime, the curse of dimensionality is alleviated, and the equilibria or optima can be characterized through the so-called MFG or MFC systems. The intuitive idea is that state processes should become approximately i.i.d. as $N \to \infty$, and therefore their
empirical measure $m^N_{{\bf X_t}}$ should be close to the common law $\mathcal{L}({\bf X_t})$ of state processes, by
a law of large numbers. Therefore, it leads to models in which an agent’s representative state depends on its own distribution, giving rise to the class of McKean–Vlasov (MKV) equations. \\
If $
\alpha^{MF} : [0,T] \times \mathbb{R}^d \to \mathbb{R}^d $
denotes an optimal control for this mean field control problem, then the controls 
$$(\alpha^{MF,i})_{i=1}^N \in \mathcal{A}, \quad \text{defined by } \alpha^{MF,i}(t, x) := \alpha^{MF}(t,x^i),$$
should be nearly optimal for the original control problem.
For an in-depth exposition, we point the reader to the foundational lectures of P.-L. Lions \cite{Lasry2007MeanFG} and the extensive monographs by Carmona and Delarue \cite{carmona2018probabilistic}. These works lay out the core mathematical techniques for tackling such problems, including Itô calculus along probability measure flows, the stochastic maximum principle, forward–backward SDEs of McKean–Vlasov type, and the Master Bellman equation in Wasserstein space.
\\

More general MFG systems, referred to as Extended MFG models or Mean Field Games of Controls, were introduced in \cite{gomes2014existence,gomes2015extendeddeterministicmeanfieldgames}. These models serve as an asymptotic framework for systems in which agents interact through both their state and control variables, as considered in our setup. For probabilistic formulations of these models, see \cite{carmona2015probabilistic,chassagneux2014probabilistic,djete2022mckean,carmona2018probabilistic}, and for analytic methods, we refer to \cite{kobeissi2022mean,kobeissi2022classical,achdou2020mean,santambrogio2021cucker,doi:10.1137/21M1407720,graber2021weak,graber2023master,camilli2023quasi,cardaliaguet2018mean}. The counterpart for MFC framework, known as Extended Mean Field Control (EMFC) has been studied in \cite{yong2013linear,graber2016linear,li2019linear,acciaio2019extended,pham2018bellman}. However, our model involves non-exchangeable agents arising from heterogeneous interactions and therefore cannot be approximated by this extended framework.\\

There have been some proposals
to extend the mean field framework to accommodate certain models with heterogeneity, more precisely when interactions depend on a network structure. Some authors have taken advantage of the theory of graphon to model heterogeneous interactions, \cite{caines2021graphon,bayraktar2023propagation, lacker2021soret,aurell2022stochastic,gao2020linear}. This framework is called Graphon Mean Field Games. It requires that the graph representing the interactions of agents possesses certain asymptotic structure, namely it has to be dense so that it converges to a graphon as the number of nodes tends to infinity. Another approach developped in \cite{bertucci2025strategicgeometricgraphsmean} consists in modeling  the interactions through a Riemannian geometric graph and deriving the asymptotic problem as a mean-field game with a Riemannian-based interaction structure. It allows to consider a dynamic structure of interaction that depends
on the interaction of the players themselves. \\

\textbf{The Convergence Problem in Mean Field Games and Control}\\
One of the main difficulties in the early development of the theory was to rigorously characterize and measure what qualifies as a “nearly optimal” solution in this setting.
The convergence problem in MFG and MFC addresses the precise relationship between 
$N$-player games and their mean field limits, with particular emphasis on how equilibria or optima, respectively, behave as the number of players grows. In cases where there exists a unique solution, one aims to demonstrate that the equilibrium/optimum of the finite 
$N$-player problem approach this limiting equilibrium/optimum in an appropriate sense. More generally, people have been interested in analyzing the convergence of the value functions $V^N$ and $V^{MF}$ respectively associated with finite and asymptotic models.\\ 

In the mean field control setting, if the running and terminal cost functions are convex and sufficiently smooth, the convergence problem has already been addressed quantitatively, with the optimal rate shown to be of order $N^{-1}\,,$ $\vert V^{N}-V^{MF}\vert =\mathrm{O}(N^{-1})\,,$ see \cite{germain2022rate}.
In the case of Mean Field Games, we replace the convexity by a monotonicity condition and apply the same strategy, see \cite{cardaliaguet2019master, carmona2018probabilistic}. Recently, Jackson and Mészáros provided a quantitative rate of convergence in the Mean Field Games of Control setting. Specifically, for a dimension $d >4\,,$ and an initial distribution ${\bf m_0} \in\mathcal{P}_2(\mathbb{R}^d)^N,$ they
established a rate of order $N^{\frac{-2}{d}}\,.$ \\

Answering similar questions in the absence of
structural conditions like convexity and monotonicity has been done qualitatively using compactness arguments, first for MFC by Lacker in \cite{lacker2016limittheorycontrolledmckeanvlasov}, then extended to Extended Mean Field Control by Djete in the series of works \cite{djete2022mckean, djete2022extendedmeanfieldcontrol, djete2023large}. More recently, \cite{cardaliaguet2023algebraic} followed by \cite{daudin2024optimal}, established quantitative algebraic convergence rates under 'natural' assumptions in the MFC setting. According to \cite{daudin2024optimal}, the fastest achievable convergence rate varies between $N^{\frac{-1}{d}}$ and $N^{\frac{-1}{2}}\,,$ depending on the metric used to measure the Lipschitz continuity of the value functions. For Graphon and Riemannian Mean Field Games, however, no convergence result has been proved yet.\\

\textbf{Our work and contribution}\\
In this paper, our aim is not to derive a suitable asymptotic problem and prove the convergence, but rather to demonstrate that the set of admissible controls may be restricted to a smaller subset without changing too much the minimal cost, which in particular includes the mean-field controls.
We define this subset as follows.\\

\textbf{Distributed controls:} \(\mathcal{A}_{\mathrm{dist}}\) denotes the set of \((\alpha^1, \ldots, \alpha^n) \in \mathcal{A}\) for which 
\[
\alpha^i(t, x_1, \ldots, x_n) = \alpha^i(t, x_i)
\]
depends only on the \(i\)-th state variable, for each \(i\).\\

Therefore, we alternatively define the distributed optimal control problem in order to compare it to the full-information problem. 
\[
 \mathcal{P}_{dist} : \inf_{\alpha \in \mathcal{A}_{\mathrm{dist}}} J_0(\alpha).
\]

While the full-information problem has been extensively investigated, see \cite{yong1999stochastic, fleming2006controlled, pham2009continuous}, the distributed control problem falls beyond the reach of the classical theory because of its nonstandard information constraints.\\

Still, if certain conditions ensure that the value functions associated with the two different problems
 remain close for large $N$, then one can restrict attention to distributed strategies, thereby reducing computational costs while still obtaining $\varepsilon$-optimal solutions.
In \cite{Seguret_2023}, the authors applied this idea to a high-dimensional convex stochastic control problem with interactions through the average of the agents’ states, showing that an 
$\varepsilon$-optimal distributed solution can be constructed without invoking the mean field limit. \\

Subsequently, Jackson and Lacker established a theoretical connection between the full-information and distributed problems in the case where agents interact through their states, see  \cite{jackson2023approximatelyoptimaldistributedstochastic}. In this context, they provide a sharp non-asymptotic bound on the gap between the value functions associated to the full-information and distributed problems. In other words, they quantify to what extend an optimal solution of the distributed control problem can be a good approximation for the optimal control of the full-information problem. \\

Our paper builds on their work and provides similar results in the context of interaction through the controls. 

The main difficulty of studying interaction through controls instead of interaction through states as in  \cite{jackson2023approximatelyoptimaldistributedstochastic} is that the optimal controls $(\alpha^1, \ldots, \alpha^N)$ have a less tractable structure, as we lose the decoupling property of the N Hamiltonians. So far, we have to restrict our study to pairwise and empirical average interactions through controls. \\
Thus, under convexity assumptions and for interaction functions of the form $$f^N : {\bf a} \to f_0(\frac{1}{N}\sum_ia^i)+\frac{1}{N^2}\sum_{i,j}\tilde h_{ij}(\vert a^i-a^j\vert)\,,$$ this paper establishes a quantitative bound between the two value functions $V^N$ and $V^N_{dist}$ respectively associated to the full-information and distributed control problem.
In the special case where $(g_{ij})_{i,j}$ are convex and $\max_{i,j}\Vert D_{ij} g^N \Vert_{\infty} \leq \frac{K}{N^{2}}$, for some positive constant $K\,,$
one obtains a convergence rate of order $N^{-1/2}$. Namely, we have
\begin{align*}
    0 \leq V^N_{\mathrm{dist}} - V^N &\leq \frac{M}{\sqrt{N}}\,,
\end{align*}
where $M$ depends on the partial and cross derivatives of $f^N$ and $g^N$ and is independent of $N\,.$
Note that these conditions on $g^N$ and $f^N$
 would correspond to the mean field regime in the case where the latter are symmetric.\\

\textbf{Outline of the paper.} The structure of the paper is as follows. We begin Section 2 by making precise assumptions, setting up the problem and presenting the main result. Section 3 gives some preliminary results about the optimal controls and the value function associated with the initial problem. Section 4 is finally dedicated to the proof of our main theorem.  
\section{Hypothesis and Main Theorem}

In this paper, all the processes are assumed to be defined on a complete filtered probability space 
$(\Omega, \mathcal{F}, \mathbb{F} = (\mathcal{F}_t)_{0 \leq t \leq T}, \mathbb{P})$, 
the filtration $\mathbb{F}$ satisfying the usual conditions, supporting N $d$-dimensional independent Wiener processes $\bigl((W^i_t)_{0 \leq t \leq T}\big)_{1\leq i\leq N}$. Let, for each $i \in \llbracket 1,N\rrbracket,$ $\mathbb{F}^i$ be the filtration generated by the process $(W^i_t)_{0 \leq t \leq T} $ and
    ${\bf Z}=(Z^1,...,Z^N)$ be a random vector such that  $Z^i$ is  $\mathbb{F}^i-$measurable  $\forall i \in \llbracket 1,N \rrbracket\,.$ \\

We will be working with the space $\mathcal{P}_2(\mathbb{R}^d)^N = \big(\mathcal{P}_2(\mathbb{R}^d)\big)^N,$ where $\mathcal{P}_2(\mathbb{R}^d)$ denotes the Wasserstein space of probability measures with finite second moment. 
We denote by $ m$ a generic element of $\mathcal{P}_2(\mathbb{R}^d)$ and by 
${\bf m} = (m_1,\dots,m_n)\,, $ a generic element of $\mathcal{P}_2(\mathbb{R}^d)^N$.\\
For $i = 1, \dots, N$, we denote by ${\bf m^{-i}}$ the element of 
$\mathcal{P}_2(\mathbb{R}^d)^{\,N-1}$ given by 
\[
{\bf m^{-i}} = (m_1, \dots, m_{i-1}, m_{i+1}, \dots, m_N).
\]

On both $\R$ and $\R^d,$ we write $\vert . \vert $ to denote the usual Euclidean norms  i.e. $\vert x^i \vert:=\sqrt{\sum_{k=1}^d\vert x^{i}_k\vert ^2} \,.$
On $(\R^d)^N\,,$ we will make use of three norms, we write :
\begin{itemize}
    \item $\Vert . \Vert$ to denote the Euclidean norm, i.e $\Vert {\bf x} \Vert:=\sqrt{\sum_{i=1}^N\sum_{k=1}^d\vert x^{i}_k\vert ^2} \,,$
    \item$ \Vert \cdot \Vert_{2,1}$ to denote the $L_{2,1}$ norm on $(\R^d)^N\,,$ i.e $$\Vert {\bf x} \Vert_{2,1}:=\sum_i\Vert x^i\Vert_2=\sum_{i=1}^N\sqrt{\sum_{k=1}^d\vert x^{i}_k\vert ^2} \,,$$ 
    \item $\Vert \cdot \Vert_{\infty}$ to denote the $\infty-$norm, i.e. $\Vert {\bf x} \Vert_{\infty}:= \max_{i \in \llbracket 1,N \rrbracket}\vert x^i\vert\,.$   
\end{itemize}
On $\R^{Nd\times Nd}\,,$ we write $\Vert \cdot \Vert_{op}$ to denote the operator norm, i.e. 
$
\Vert A \Vert_{op}:= \sup_{\|v\|=1} \|A v\|\,.
$\\
On $\mathcal{B}((\R^d)^N,(\R^d)^N)\,,$ we write $\Vert \cdot \Vert_{\infty}$ to denote the supremum norm on $(\R^d)^N$ of the $\infty-$norm on $(\R^d)^N$ , i.e.
\begin{align*}
    \Vert f \Vert_{\infty}:=\sup_{{\bf x} \in (\R^d)^N} \Vert f({\bf x}) \Vert_{\infty}=\sup_{{\bf x} \in (\R^d)^N}\max_{i \in \llbracket 1,N \rrbracket}\vert \big (f(x)\big )^i\vert\,.
\end{align*}

On $\mathcal{B}((\R^d)^N,\R^{d\times d})\,,$ we write $\Vert \cdot \Vert_{\infty}$ to denote the supremum norm on $(\R^d)^N$ of the Frobenius norm on $\R^{d\times d}$, i.e.
\begin{align*}
    \Vert f \Vert_{\infty}:=\sup_{{\bf x} \in (\R^d)^N} \sqrt{\sum_{k,l=1}^d\vert \big( f({\bf x})\big)_{k,l}\vert^2}\,.
\end{align*}

On $L^\infty(\R^d,m)\,,$ for some $ m \in \mathcal{P}_2(\R^d)\,,$ we write $\Vert \cdot \Vert_{L^\infty}$ to denote the $L^{\infty}-$norm with respect to the measure $ m\,,$ i.e. \[
\|f\|_{L^\infty} 
:= \inf\left\{ M \ge 0 \;:\; m\big(\{x \in \R^d : |f(x)| > M\}\big) = 0 \right\}.
\]

Let  $\mathcal{A}$ denotes the set of $\alpha = (\alpha^1, \dots, \alpha^N)$, where each $$\alpha^i: [0, T] \times (\mathbb{R}^d)^N \to \R^d\,,$$ is measurable and the SDE  
\begin{equation}
    \label{eq:state_strong}
    dX^{i,\alpha}_s= \alpha^i(s,\boldsymbol{X^{\alpha}_s})ds + dW^{i}_s \qquad s\in [t,T] \qquad X^{\alpha,i}_t=Z^i\,.
\end{equation}
admits a unique strong solution ${\bf X}$ satisfying
\begin{align*}
    \mathbb{E} &\left[ \int_t^T \left| \alpha_i(s, {\bf X_s}) \right|^2 ds \right] < \infty, \quad \text{for each } t \in [0, T] \,.
\end{align*}

So, given $\alpha \in \mathcal{A}$, we define the total cost functional $J_0(\alpha)$ associated with $\alpha$ by : 
\begin{align*}
    J_0(\alpha)=\E\biggl[\int_0^T \biggl(\frac{1}{2N}\sum_i \vert \alpha^i(t,\boldsymbol{X^{\alpha}_t} )\vert^2 
    +
    f^N(\boldsymbol{\alpha}(t,\boldsymbol{X^{\alpha}_t}))\biggr)dt+g^N(\boldsymbol{X^{\alpha}_T})\biggr]\,,
\end{align*}
where $\E$ denotes the expectation with respect to the given probability $\mathbb{P}$ and, for each i, $X^{\alpha,i}$ is solution to \eqref{eq:state_strong}.\\

Therefore, the goal is to minimize this cost functional over the set $\mathcal{A}$, namely to solve the following full-information problem
\begin{align}
    \label{eq:pb0}
    \inf_{\alpha \in \mathcal{A}}J_0(\alpha)\,. \tag{$P_{0}$}
\end{align}

We then define the associated value function 
\begin{align}
\label{def:valuefx0}
V^N:
    \begin{cases}
        [0,T]\times (\R^d)^N\to \R \\
        (s,\bx) \mapsto \inf_{\alpha \in \mathcal{A}}\E\biggl[\int_s^T \biggl(\frac{1}{2N}\sum_i \vert \alpha^i(t,\boldsymbol{X^{\alpha}_t} )\vert^2 
    +
    f^N(\boldsymbol{\alpha}(t,\boldsymbol{X^{\alpha}_t}))\biggr)dt+g^N(\boldsymbol{X^{\alpha}_T})\biggr]\,.
    \end{cases}  
\end{align}
where ${\bf X^{\alpha}}$ satisfies the SDE \eqref{eq:state_strong} and ${\bf X^{\alpha}_s}=\bx.$  \\

Let $\mathcal{A}_{dist} \subset \mathcal A$ denotes the set of controls \( (\alpha_1, \ldots, \alpha_n) \) for which, 
\[
\alpha_i(t, x_1, \ldots, x_n) = \alpha_i(t, x_i)
\]
depends only on the \( i \)-th state variable, for each \( i \).

An alternative problem to \eqref{eq:pb0} is to minimize the same cost functional but over the set $\mathcal{A}_{dist}$, namely solving 
\begin{align}
    \label{eq:pbdist}
    \inf_{\alpha \in \mathcal{A}_{dist}}J_0(\alpha)\,. \tag{$P_d$}
\end{align}

We also define the associated value function 
\begin{align}
\label{def:valuefxd}
\mathcal{V}_{dist}^N:
    \begin{cases}
        [0,T]\times \mathcal{P}_2(\mathbb{R}^d)^N\to \R \\
        (s,\boldsymbol \mu) \mapsto \inf_{\alpha \in \mathcal{A}_{dist}}\E\biggl[\int_s^T \biggl(\frac{1}{2N}\sum_i \vert \alpha^i(t,X^{\alpha,i}_t )\vert^2 
    +
    f^N(\boldsymbol{\alpha}(t,\boldsymbol{X^{\alpha}_t}))\biggr)dt+g^N(\boldsymbol{X^{\alpha}_T})\biggr]\,.
    \end{cases}
\end{align}
where ${\bf X^{\alpha}}$ satisfies the SDE \eqref{eq:state_strong} and ${\bf X^{\alpha}_s} \sim \boldsymbol \mu.$ 
\\

To be able to compare the full-information problem with the distributed problem, we introduce the lifted version of $V^N$,  ${\mathcal V}^N : [0,T] \times \mathcal{P}_2(\mathbb{R}^d)^N\to \R$ such that 
\begin{align}
    \label{def:lift_valuefx}
    {\mathcal V}^N(t, \boldsymbol \mu):= \int_{(\R^d)^N} V^N(t,\bx) \boldsymbol \mu(d\bx),\,.
\end{align}
Therefore, the idea is to 'lift' the full-information control problem from a state process in $(\R^d)^N$ to a state process in $\mathcal{P}_2(\mathbb{R}^d)^N\,,$  the space of vectors of probability measures in $\R^d\,.$

\begin{assumption}
    \label{hyp:fG}
    \begin{enumerate}
        \item The function $f^N : (\R^d)^N \to \R$ is of the form $$
        f^N({\bf a})=f_0(\frac{1}{N}\sum_ia^i)+\frac{1}{N^2}\sum_{i,j}h_{ij}( a^i-a^j )\,.$$  \\
        \item For any $(i,j) \in \llbracket 1,N\rrbracket^2,$ $h_{ij}$ is of the form 
        \begin{align*}
            h_{ij}:
            \begin{cases}
                \R^d \to \R_+\\
                a \mapsto \tilde h_{ij}(\vert a\vert)\,,
            \end{cases}
        \end{align*}
        where $\tilde h_{ij} : \R_+ \to \R_+$ is of class $C^2\,,$ convex, nondecreasing, such that $\tilde h_{ij}(0)=\tilde h'_{ij}(0)=0$ and $\Vert D^2h_{ij}\Vert_{\infty}$ is finite.\\
        \item The function $f_0 : \R^d \to \R$ is of class $C^2\,,$ convex, Lipschitz and $\Vert D^2f_0\Vert_\infty$ is finite. 
\item The function $g^N : (\R^d)^N \to \R$ is of class $C^2\,,$ bounded from below,  convex and has bounded derivatives of order two. We denote $C_G >0$ a constant such that the two inequalities hold : 
\begin{align*}
0 \leq  D^2 g^N  \leq \frac{C_G}{N} I_{Nd \times Nd}\,, \qquad\sup_i \Vert D_ {x_ i}g^N\Vert_{\infty}\leq \frac{C_G}{N}\,.
\end{align*}
    \end{enumerate}
\end{assumption}

\begin{rmk}
    The inequality $D^2 g^N  \leq \frac{C_G}{N} I_{Nd \times Nd}$ means that, for any $ {\bf x},{\bf y} \in (\R^d)^N, $ we have $$ {\bf y}^TD^2 g^N({\bf x}){\bf y}\leq \frac{C_G}{N}\sum_{i=1}^N\vert y^i\vert\,.$$
\end{rmk}

Before stating our main theorem, we define a concentration property that must be satisfied by the initial distribution. 

\begin{defn}[Poincaré inequality]
Let $m \in \mathcal{P}_2(\mathbb{R}^d)$ be a probability measure. One says that $m$ satisfies the Poincaré inequality with some constant $c$ if 
\[
\operatorname{Var}_m(g):=\int \vert g(x)-\int g(y)m(dy)\vert^2m(dx)  \leq c \int  |\nabla g(x)|^2 m(dx),
\]
for all bounded Lipschitz functions \( g : \mathbb{R}^d \to \mathbb{R} \). \\
\end{defn}
   
\begin{rmk}
    Here are some probability measures that satisfy the Poincaré inequality : 
    \begin{itemize}
        \item Any strongly log-concave probability measure (including Gaussian measures), that is, a measure of the form 
$$d\mu(x) = Z^{-1} e^{-V(x)}\,d\lambda(x)\,,$$ with $\nabla^2 V(x) \geq \lambda I_d$ for some $\lambda > 0$, 
satisfies a Poincaré inequality with constant $$c \le 1/\lambda\,;$$

        \item Any compactly supported probability measure whose support is smooth and convex satisfies a Poincaré inequality, with a constant depending only on the geometry of the support ;
        \item If,  $ \mu^1,....,\mu^N \in \mathcal{P}_2(\mathbb{R}^d)$ satisfy the Poincaré inequality with some constant $c^1,...,c^N$, then the product measure $$\boldsymbol \mu:=(\mu^1 \otimes \dots \otimes \mu^N) \in \mathcal{P}_2(\mathbb{R}^d)^N$$ satisfies the Poincaré inequality with constant $c:=\max_{i \in \llbracket 1, \dots N \rrbracket}c^i\,.$ 
    \end{itemize}
\end{rmk}

\begin{thm}
\label{thm:diff_V_O}
    Let $(t,\boldsymbol \mu) \in [0, T] \times \mathcal{P}_2(\mathbb{R}^d)^N$ such that $\boldsymbol \mu$ satisfies the Poincaré inequality with some non-negative constant $c_p$ and suppose that Assumption \ref{hyp:fG} holds.\\

    Then, if, for each $(i,j) \in \llbracket 1,N \rrbracket,$ 
    \begin{align}
        \label{hyp:gN}
    \Vert D_{ij}g^N \Vert_{\infty} \leq \frac{K_G}{N^2}\,, 
    \end{align}
    for some positive constant $K_G$ independent of N, we have 
    \begin{align*}
    0 \leq \mathcal{V}^N_{\mathrm{dist}}(t,\boldsymbol \mu) - \mathcal V^N(t,\boldsymbol \mu)
     & \leq \frac{M}{\sqrt{N}}\,,
\end{align*}
for some positive constant $M\,,$ independent of N. \\

Without the additional hypothesis \eqref{hyp:gN}, we have more generally
\begin{align*}
    0 \leq \mathcal{V}^N_{\mathrm{dist}}(t,\boldsymbol \mu) - \mathcal V^N(t,\boldsymbol \mu) \leq (T-t)\big (K_f(t)+K_g(t)\big )\,,
\end{align*}
where  
\begin{align*}
    \begin{cases}
        K_g(t):=e^{\frac{3C_G}{2N}(T-t)}\sqrt{NC_P\sum_{ij}\Vert D_{ij}g^N\Vert_{\infty}^2}\biggl(2C_G+e^{\frac{3C_G}{2N}(T-t)}\sqrt{NC_P\sum_{ij}\Vert D_{ij}g^N\Vert_{\infty}^2}\biggr)\\
    K_f(t):=  K_1+K_f'\frac{(C_G +\Vert Df_0\Vert_\infty)^2}{\sqrt{N}}\sqrt{(\frac{4}{N^2}\sum_{ij}\Vert D^2h_{ij}\Vert_{\infty}^2+\Vert D^2f_0\Vert_{\infty}^2)\big ( e^{3\frac{C_G}{N}(T-t)}-1
    \big )}\\
    K_1:= \frac{K_1'}{\sqrt{N}}(2\max_{ij}\Vert Dh_{ij}\Vert_{\infty,C_G+\Vert Df_0\Vert_{\infty}}+\Vert Df_0\Vert_{\infty})\\
    \qquad \qquad\times (C_G+\Vert Df_0\Vert_{\infty}) \big (\sqrt{\frac{4}{N^2}\sum_{ij}\Vert D^2h_{ij}\Vert_{\infty}^2}+\Vert D^2f_0\Vert_{\infty}\big )^2\,,
    \end{cases}
\end{align*}
with $C_G$ is defined in Assumption \ref{hyp:fG}, 
$$ C_p:=\frac{e^{2C_G T} - 1}{2C_G} + c_p e^{2C_G T}\,,$$
$K_1', K_f'\geq 0$  independent of N, and
\begin{align*}
    \max_{ij}\Vert Dh_{ij}\Vert_{\infty,C_G+\Vert Df_0\Vert_{\infty}}:=\max_{ij}\sup_{ x \in \R^d , \vert x \vert \leq C_G+\Vert Df_0\Vert_{\infty}}\vert Dh_{ij}(x)\vert \,.
\end{align*}
\end{thm}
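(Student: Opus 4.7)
The lower bound $\mathcal V^N\le\mathcal V^N_{dist}$ is immediate since $\mathcal A_{dist}\subset\mathcal A$; all the work lies in the upper bound. Following the strategy announced in the abstract, my plan is to realize both value functions as solutions of Hamilton--Jacobi equations on $\mathcal P_2(\R^d)^N$, to show that the lifted $\mathcal V^N$ is an $\varepsilon$-subsolution of the equation characterizing $\mathcal V^N_{dist}$, and to conclude by a comparison principle. The classical HJB satisfied by $V^N$ on $(\R^d)^N$ is
\begin{equation*}
\partial_tV^N+\tfrac12\sum_i\Delta_{x^i}V^N+H^N(\bx,\nabla V^N)=0,\qquad V^N(T,\bx)=g^N(\bx),
\end{equation*}
with $H^N(\bx,\boldsymbol p):=\inf_{\boldsymbol a\in(\R^d)^N}\{\sum_i(p^i\cdot a^i+\frac{1}{2N}|a^i|^2)+f^N(\boldsymbol a)\}$. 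Integrating against a product measure $\boldsymbol\mu$ yields an evolution equation for $\mathcal V^N(t,\boldsymbol\mu)=\int V^N(t,\bx)\,d\boldsymbol\mu$, whereas the dynamic programming principle for the distributed problem produces the parallel HJB
\begin{equation*}
\partial_t\mathcal V^N_{dist}+\tfrac12\sum_i\int\Delta_{x^i}\tfrac{\delta\mathcal V^N_{dist}}{\delta\mu^i}\,d\mu^i+\mathcal H^N_{dist}(\boldsymbol\mu,D_\mu\mathcal V^N_{dist})=0,
\end{equation*}
where $\mathcal H^N_{dist}$ is obtained from $H^N$ by restricting the infimum to feedbacks $\alpha^i(x^i)$ depending only on the $i$-th coordinate and integrating against $\boldsymbol\mu$.

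A backward Feynman--Kac/maximum-principle argument along the optimal distributed flow reduces the upper bound $\mathcal V^N_{dist}-\mathcal V^N\le\varepsilon$ to estimating the residual
\begin{equation*}
R(t,\boldsymbol\mu):=\mathcal H^N_{dist}(\boldsymbol\mu,D_\mu\mathcal V^N)-\int H^N(\bx,\nabla V^N)\,d\boldsymbol\mu\ge 0,
\end{equation*}
the sign $R\ge 0$ coming from the fact that distributed feedbacks are a restricted class in the pointwise minimization defining $H^N$. To estimate $R$, I would introduce the pointwise full-information minimizer $\boldsymbol a^*(t,\bx):=\arg\min H^N(\bx,\nabla V^N(t,\bx))$ and its distributed projection by conditional expectation, $\tilde\alpha^i(t,x^i):=\E_{\boldsymbol\mu}[a^{*,i}(t,\boldsymbol X)\mid X^i=x^i]$, and use $\tilde{\boldsymbol\alpha}$ as a test control in $\mathcal H^N_{dist}$. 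The tower property combined with Jensen's inequality on the kinetic term (which has the right sign and is discarded) leaves two contributions: a covariance residue $\sum_i\int\mathrm{Cov}_{\boldsymbol\mu}(\nabla_{x^i}V^N,a^{*,i}\mid X^i)\,d\mu^i$ and the interaction mismatch $\int[f^N(\tilde{\boldsymbol\alpha})-f^N(\boldsymbol a^*)]\,d\boldsymbol\mu$. Using the $C^2$/Lipschitz bounds on $Df_0$ and $Dh_{ij}$ from Assumption~\ref{hyp:fG} together with Cauchy--Schwarz, both pieces are controlled by the conditional $L^2$ error $\sum_i\E_{\boldsymbol\mu}[|a^{*,i}(\boldsymbol X)-\tilde\alpha^i(X^i)|^2]$.

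The Poincar\'e hypothesis now enters decisively: tensorization of the coordinate-wise Poincar\'e inequalities gives
\begin{equation*}
\sum_i\E_{\boldsymbol\mu}[|a^{*,i}(\boldsymbol X)-\tilde\alpha^i(X^i)|^2]\le c_p\sum_i\sum_{j\ne i}\E_{\boldsymbol\mu}[|\nabla_{x^j}a^{*,i}(\boldsymbol X)|^2].
\end{equation*}
Differentiating the first-order optimality condition $\nabla_{x^i}V^N+a^{*,i}/N+\partial_{a^i}f^N(\boldsymbol a^*)=0$ in $\bx$ expresses $\nabla_{x^j}a^{*,i}$ as an algebraic function of the mixed second derivatives $D_{ij}V^N$, with the quadratic block coming from $D^2f^N$ playing a favorable role by convexity. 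A Bernstein-type argument (differentiating the HJB twice and propagating the resulting linear inequality along the optimal dynamics, as prepared by Section~3) then controls $\|D_{ij}V^N(t,\cdot)\|_\infty$ from its terminal value $\|D_{ij}g^N\|_\infty$ with an exponential factor of the form $e^{C_G(T-t)/N}$; the enhanced constant $C_p=\tfrac{e^{2C_GT}-1}{2C_G}+c_pe^{2C_GT}$ appears here as the Poincar\'e constant of $\mathcal L(\boldsymbol X_s)$ transported under the optimal drift, whose Lipschitz norm is bounded by $C_G$ thanks to Assumption~\ref{hyp:fG}.

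Time-integrating the residual bound along the distributed flow via a Gr\"onwall-type argument assembles exactly into the formulas $K_f(t)$ and $K_g(t)$ in the general case. Under the sharp hypothesis~\eqref{hyp:gN}, the terminal estimate $\sum_{ij}\|D_{ij}g^N\|_\infty^2\le K_G^2/N^2$ gives $\sqrt{N\sum_{ij}\|D_{ij}g^N\|_\infty^2}=O(N^{-1/2})$, which propagates through to the announced rate $M/\sqrt N$. The principal obstacle is the propagation of the off-diagonal second-order bounds on $V^N$: the pairwise form $f^N(\boldsymbol a)=f_0(\bar a)+\tfrac{1}{N^2}\sum_{ij}h_{ij}(a^i-a^j)$ prescribed in Assumption~\ref{hyp:fG} is essential here, since it forces each coupling contribution in the Bernstein estimate to carry an extra $1/N$ factor, which is what keeps $\sum_{ij}\|D_{ij}V^N\|_\infty^2$ of order $1/N$ and converts the sharp terminal assumption into the $1/\sqrt N$ rate.
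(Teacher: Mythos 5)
Your overall framework---HJB comparison, conditional-expectation test control, covariance/interaction decomposition, and Poincar\'e---follows the paper's spirit, and the lower bound and the derivation of the residual $R$ are fine. The gap is in the step you label ``Bernstein-type argument'': you need a pointwise estimate of the form $\|D_{ij}V^N(t,\cdot)\|_\infty\lesssim e^{C_G(T-t)/N}\|D_{ij}g^N\|_\infty$ for $i\neq j$, i.e.\ off-diagonal second derivatives of $V^N$ of size $O(1/N^2)$ at intermediate times. No such estimate is established in the paper---Lemma~\ref{lem:D2V} only gives the quadratic-form bound $0\le D^2V^N\le \frac{C_G}{N}I$, which permits $|D_{ij}V^N|$ of order $1/N$---and it is not clear how to propagate $\ell^\infty$ control of the off-diagonal Hessian backward through the Riccati-type equation for $D^2V^N$ under Assumption~\ref{hyp:fG} alone: the $D^2f^N$ coupling mixes all entries, and a Bernstein/maximum-principle argument controls the spectral norm (which is what Lemma~\ref{lem:D2V} does) but does not single out individual off-diagonal entries.

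The paper avoids this entirely by never asking for pointwise control of $D_{ij}V^N$ at times $t<T$. The quantity that is propagated is the conditional variance $E_Q^N(s,{\bf m}_s)=N\sum_i\big(\E\,|D_iV^N(s,\check{\bf X}_s)|^2-\E\,|\E[D_iV^N(s,\check{\bf X}_s)\mid\check X^i_s]|^2\big)$ along the distributed optimal flow $\check{\bf X}_s$: It\^o and the operator-norm bound on $D^2V^N$ yield a closed Gr\"onwall inequality (Proposition~\ref{prop:EQ_mT}) whose backward integration transfers the terminal value $E_Q^N(T,{\bf m}_T)$---the only place where the Poincar\'e inequality, transported from $\boldsymbol\mu$ to ${\bf m}_T$ under the $C_G$-Lipschitz drift $\check a^{\mathcal V}$, and the hypothesis \eqref{hyp:gN} on $D_{ij}g^N$ are used---to every $s\in[t,T]$. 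This flow-propagated $L^2$ quantity plays exactly the role your plan assigns to pointwise Hessian control, and it also explains why the proof is carried out along the curve $s\mapsto{\bf m}_s$ rather than at an arbitrary measure. A smaller structural difference: you upper-bound $\mathcal H^N_{dist}$ with the test control $\tilde\alpha^i=\E[a^{*,i}\mid X^i]$, while the paper works with the genuine distributed minimizer $\check a$ and compares it with $\hat a$ through the fixed-point characterizations \eqref{defhatapt} and \eqref{defchecka}, leading to Lemma~\ref{lem:AN}; your variant is a valid bound but would require its own version of that lemma.
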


\section{Preliminary Results }
In order to prove Theorem \ref{thm:diff_V_O}, we first state some useful results about the value function $V^N$.

\begin{lem}
    \label{lem:D2V}
    For each $0 \leq t \leq T$ and ${\bf x} \in (\mathbb{R}^d)^N$, the function $V^N(t, \cdot)$ is twice differentiable with  
\[
0 \leq D^2 V^N (t, {\bf x}) \leq \frac{C_G}{N} I_{Nd \times Nd},
\]
where $C_G$ is defined in item 4. of Assumption \ref{hyp:fG}.
\end{lem}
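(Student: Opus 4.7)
My plan is to derive the two one-sided second-order bounds separately via a synchronous coupling argument, and then to invoke Alexandrov's theorem to conclude that $V^N(t,\cdot)$ is twice differentiable with the stated pointwise Hessian estimate. Throughout, I work with $\mathbb{F}$-adapted open-loop controls; by a standard shift argument and strong uniqueness for the SDE \eqref{eq:state_strong}, the resulting value function coincides with the feedback value $V^N$ defined in \eqref{def:valuefx0}.

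For the convexity bound $D^2 V^N \geq 0$, I would first rewrite the cost functional along an open-loop control $\alpha$ starting at ${\bf x}$ as
\[
J_0(\alpha, {\bf x}) = \mathbb{E}\!\left[\int_t^T\!\Bigl(\tfrac{1}{2N}\sum_i|\alpha^i_s|^2+f^N(\alpha_s)\Bigr)ds + g^N\!\Bigl({\bf x}+\int_t^T\alpha_s\,ds+{\bf W_T}-{\bf W_t}\Bigr)\right].
\]
This map is \emph{jointly} convex in $(\alpha,{\bf x})$: the quadratic term depends only on $\alpha$ and is convex; the running cost $f^N$ is convex as a sum of compositions of convex functions with linear maps (using items 2 and 3 of Assumption \ref{hyp:fG}, noting in particular that $h_{ij}(\cdot)=\tilde h_{ij}(|\cdot|)$ is convex since $\tilde h_{ij}$ is convex and nondecreasing); and the terminal term is the convex function $g^N$ composed with an affine map of $({\bf x},\alpha)$. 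Since the partial infimum of a jointly convex function is convex, $V^N(t,\cdot)=\inf_\alpha J_0(\alpha,\cdot)$ is convex on $(\mathbb{R}^d)^N$.

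For the upper bound $D^2 V^N \leq \tfrac{C_G}{N}I_{Nd\times Nd}$, I would prove the semi-concavity estimate
\[
V^N(t,{\bf x}+{\bf h})+V^N(t,{\bf x}-{\bf h})-2V^N(t,{\bf x}) \leq \tfrac{C_G}{N}\,\|{\bf h}\|^2
\qquad \forall\,{\bf x},{\bf h}\in(\mathbb{R}^d)^N.
\]
Fix an $\varepsilon$-optimal $\alpha^*\in\mathcal{A}$ for the initial point ${\bf x}$, with associated trajectory ${\bf X^*}$. Define the shifted feedbacks $\beta^\pm(s,{\bf y}):=\alpha^*(s,{\bf y}\mp{\bf h})\in\mathcal{A}$. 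Driven by the same Brownian motion ${\bf W}$, the controls $\beta^\pm$ produce, from initial data ${\bf x}\pm{\bf h}$, the shifted strong solutions ${\bf X^*}\pm{\bf h}$, along which the realized open-loop control equals $\alpha^*(s,{\bf X^*_s})$. The quadratic and $f^N$ contributions to the running cost therefore coincide for $\beta^+$, $\beta^-$, and $\alpha^*$; summing the three cost inequalities gives
\[
V^N(t,{\bf x}+{\bf h})+V^N(t,{\bf x}-{\bf h})-2V^N(t,{\bf x}) \leq \mathbb{E}\!\left[g^N({\bf X^*_T}+{\bf h})+g^N({\bf X^*_T}-{\bf h})-2g^N({\bf X^*_T})\right]+2\varepsilon,
\]
which is bounded by $\tfrac{C_G}{N}\|{\bf h}\|^2$ thanks to the Hessian bound on $g^N$ in item 4 of Assumption \ref{hyp:fG}. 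Letting $\varepsilon\to0$ concludes.

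Combining convexity with $(C_G/N)$-semi-concavity shows $V^N(t,\cdot)$ is of class $C^{1,1}$ on $(\mathbb{R}^d)^N$, hence (by Alexandrov's theorem) twice differentiable almost everywhere with Hessian satisfying $0\leq D^2 V^N(t,\cdot)\leq \tfrac{C_G}{N}I_{Nd\times Nd}$, which is the content of the lemma. The only subtle ingredient is the equivalence between feedback and open-loop values used in the convexity step; I expect this to be the main technical obstacle, though it is standard under the admissibility and quadratic-growth conditions of the problem and does not rely on any further structure beyond Assumption \ref{hyp:fG}.
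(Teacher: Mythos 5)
Your argument mirrors the paper's: joint convexity of the open-loop cost in $(\alpha,{\bf x})$ plus the partial-infimum rule gives convexity of $V^N(t,\cdot)$, and running the same control from translated initial points (so that the control-dependent running cost is unchanged) transfers the Hessian bound on $g^N$ to a semi-concavity estimate for $V^N$. One caveat worth noting: your Alexandrov step only yields twice differentiability almost everywhere, which is strictly weaker than the pointwise claim in the lemma statement, but the paper's own proof quietly has the same gap (it invokes a characterization of $D^2 g \le C I$ that presupposes $g$ is already $C^2$), and in practice only the resulting $C^{1,1}$ bound on $DV^N$ is used downstream.
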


\begin{proof}
    First notice that the control problem under study is equivalent when posed over open-loop controls.\\
    We will first show that $V^N$ is convex. We define the function J as 
    \begin{align*}
        J(t,{\bf{x}},{\bf{a}}):=\E\biggl[\int_t^T\biggl(\sum_i\frac{1}{2N}(\vert a^{i}_s\vert^2+f^N({\bf{a}}_s)\biggr)ds+g^N({\bf{X^{a}_T}})\biggr]\,,
    \end{align*}
    where ${\bf a} = (a^1, \ldots, a^n) $ is a vector of square-integrable, adapted processes such that each $a^i$  is adapted to the  filtration $\mathbb{F}$  supporting N $d$-dimensional independent Wiener processes $\bigl((W^i_t)_{0 \leq t \leq T}\big)_{1\leq i\leq N}$, and where ${\bf{X^{a}}}$ satisfies the following SDE 
    \begin{align*}
        dX^{i,a}_t= a^i_tdt + dW^{i}_t\,,
    \end{align*} with ${\bf X_t^a}={\bf x}\,.$\\
    
    We first need to prove that $J(t,.,.)$ is convex on $(\R^d)^N \times \mathcal{A}$.\\
    Let ${\bf x}^0, {\bf x}^1 \in (\mathbb{R}^d)^N$ and ${\bf a}_0(\cdot), {\bf a}_1(\cdot) \in \mathcal{A}$.  \\
For $0 \leq \lambda \leq 1$, let  
$${\bf x}^\lambda = (1 - \lambda){\bf x}^0 + \lambda {\bf x}^1 \,,$$ 
$${\bf a}_\lambda(\cdot) = (1 - \lambda){\bf a}_0(\cdot) + \lambda {\bf a}_1(\cdot)\,.$$  
Let ${\bf x}_0(\cdot),{\bf x}_1(\cdot)$ and ${\bf x}_\lambda(\cdot)$ be the solutions to 
\begin{align*}
    d{\bf x}_\lambda(s)={\bf a}_\lambda(s)ds+dW(s)\,.
\end{align*}
for $t \leq s \leq t_1$, with ${\bf x}_\lambda(t) = {\bf x}^\lambda$. \\ 
By linearity of the drift with repect to the control, we have
${\bf x}_\lambda(s) = \lambda {\bf x}_1(s) + (1 - \lambda){\bf x}_0(s)$.\\
Moreover, $f^N$ is convex as a sum of convex functions according to item 2. and 3. of Assumption \ref{hyp:fG}.
    Together with the convexity of the quadratic function, we get the convexity of J. In other words, 
    \begin{align*}
        J(t,x^{\lambda},a_{\lambda})\leq(1-\lambda)J(t,{\bf x}^0,{\bf a}_0)+\lambda J(t,{\bf x}^1,{\bf a}_1)
    \end{align*}
    We now want to prove that $V^N(t,.)$ is convex on $(\R^d)^N$.\\
    Given $\delta >0$, choose ${\bf a}_0(.), {\bf a}_1(.)$ such that for $i=0,1$,
    \begin{align*}
        J(t,{\bf x}^i,{\bf a}_i)<V^N(t,{\bf x}^i)+\delta\,.
    \end{align*}
    Therefore,
    \begin{align*}
        V^N(t,{\bf x}^{\lambda})\leq J(t,{\bf x}^{\lambda},{\bf a}_{\lambda})\leq (1-\lambda)V^N(t,{\bf x}^0)+\lambda V^N(t,{\bf x}^1) + \delta\,.
    \end{align*}
    Since $\delta$ is arbitrary, $V(t,.)$ is convex.\\
    
    Secondly, we now prove the upper bound.\\
    We will use the fact that a $C^2$ function $g$ on a Euclidean space satisfies  
$D^2 g({\bf{x}}) \leq C I_{nd \times nd}$ for all ${\bf{x}}$, if and only if  
$$g(r {\bf{y}} + (1 - r) {\bf{z}}) \geq r g({\bf{y}}) + (1 - r) g({\bf{z}}) - \frac{C}{2} r(1 - r) |{\bf{y}} - {\bf{z}}|^2, \quad \forall {\bf{y}}, {\bf{z}}, \forall r \in (0,1)\,.$$\\
Fix $t \in [0, T]$ as well as ${\bf{x}}, {\bf{y}}, {\bf{z}} \in (\mathbb{R}^d)^N$ such that ${\bf{x}} = r {\bf{y}} + (1 - r) {\bf{z}}$ for some $r \in (0,1)$.  
Let ${\bf a^*}$ denote an open-loop optimizer of the control problem. Define $X$ by 
$$X_s^i=x^i+\int_t^sa^{*,i}_udu+(W_s^i - W_t^i)\quad t \leq s \leq T \,,$$ and ${\bf{Y}},{\bf{Z}} $ by  
\[
Y_s^i = y^i + \int_t^s a^{*,i}_u \, du + (W_s^i - W_t^i), \quad t \leq s \leq T,
\]
\[
Z_s^i = z^i + \int_t^s a^{*,i}_u \, du + (W_s^i - W_t^i), \quad t \leq s \leq T.
\]
So ${\bf{X}}=r{\bf{Y}}+(1-r){\bf{Z}}$ and ${\bf{Y}}-{\bf{Z}}={\bf{y}}-{\bf{z}}$\,. Moreover, 
\begin{align*}
    V^N(t,{\bf{x}})&=\E\int_t^T\biggl(\sum_i\frac{1}{2N}(a^{*,i}_s)^2+f^N(\boldsymbol{a^{*}_s})\biggr)ds+g^N({\bf{X_T}})\\
    &\geq \E\int_t^T\biggl(\sum_i\frac{1}{2N}(a^{*,i}_s)^2+f^N(\boldsymbol{a^{*}_s})\biggr)ds+rg^N({\bf{Y_T}})+(1-r)g^N({\bf{Z_T}})\\
    & \qquad \qquad -\frac{C_G}{2N}r(1-r)\vert {\bf{y}}-{\bf{z}}\vert ^2\\
    &\geq rV^N(t,{\bf{y}})+(1-r)V^N(t,{\bf{z}})-\frac{C_G}{2N}\vert {\bf{y}}-{\bf{z}}\vert ^2\,.
\end{align*}
and thus claimed estimate holds.

\end{proof}

\begin{lem}
    \label{lem:Lip_V}
    For each $0 \leq t_0 \leq T$, and $({\bf x_0},{\bf y_0})\in (\R^d)^N,$
    \begin{align*}
        \vert V^N(t_0,{\bf y_0})-V^N(t_0,{\bf x_0})\vert &\leq \frac{C_G}{N}\Vert{\bf y_0}-{\bf x_0} \Vert_{2,1}\,, 
    \end{align*}
        where the constant $C_G$ is defined in item 4. of Assumption \ref{hyp:fG}, in particular such that \begin{align*}
    \sup_i \Vert D_ {x_ i}g^N\Vert_{\infty}\leq \frac{C_G}{N}\,,
\end{align*}
and where we recall that $\Vert \cdot \Vert_{2,1}$ is defined by $$\Vert {\bf x} \Vert_{2,1}:=\sum_i\Vert x^i\Vert_2=\sum_{i=1}^N\sqrt{\sum_{k=1}^d\vert x^{i}_k\vert ^2} \,.$$
\end{lem}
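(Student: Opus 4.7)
The plan is to use the same coupling idea employed in the proof of Lemma \ref{lem:D2V}: work with open-loop controls and couple the two state processes starting at ${\bf x_0}$ and ${\bf y_0}$ by driving them with the same Brownian motion and the same control. The key observation is that both the kinetic cost $\frac{1}{2N}\sum_i |a^i|^2$ and the interaction cost $f^N({\bf a})$ depend only on the control, not on the state, so under this coupling the only contribution to the cost difference comes from the terminal cost $g^N$.

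Concretely, given $\varepsilon > 0$, I would pick an open-loop control ${\bf a}^*$ that is $\varepsilon$-optimal for the problem starting at $(t_0, {\bf x_0})$, and define the two coupled trajectories
\begin{align*}
X^i_s &= x_0^i + \int_{t_0}^s a^{*,i}_u \, du + (W^i_s - W^i_{t_0}), \\
\tilde X^i_s &= y_0^i + \int_{t_0}^s a^{*,i}_u \, du + (W^i_s - W^i_{t_0}),
\end{align*}
for $t_0 \le s \le T$. Then ${\bf \tilde X}_T - {\bf X}_T = {\bf y_0} - {\bf x_0}$ almost surely, and since the running cost is identical for the two trajectories, one obtains
\begin{align*}
V^N(t_0, {\bf y_0}) - V^N(t_0, {\bf x_0}) \leq \E\bigl[g^N({\bf \tilde X}_T) - g^N({\bf X}_T)\bigr] + \varepsilon.
\end{align*}

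Next I would control the $g^N$ increment coordinate by coordinate using the fundamental theorem of calculus along the straight segment ${\bf X}_T + \lambda({\bf y_0} - {\bf x_0})$, $\lambda \in [0,1]$, which gives
\begin{align*}
g^N({\bf \tilde X}_T) - g^N({\bf X}_T) = \sum_{i=1}^N \int_0^1 D_{x_i} g^N\bigl({\bf X}_T + \lambda ({\bf y_0} - {\bf x_0})\bigr) \cdot (y_0^i - x_0^i) \, d\lambda.
\end{align*}
Applying Cauchy--Schwarz coordinatewise and the bound $\sup_i \|D_{x_i} g^N\|_\infty \le C_G/N$ from Assumption \ref{hyp:fG}, the right-hand side is at most $\frac{C_G}{N} \sum_i |y_0^i - x_0^i| = \frac{C_G}{N} \|{\bf y_0} - {\bf x_0}\|_{2,1}$.

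Finally, letting $\varepsilon \to 0$ and swapping the roles of ${\bf x_0}$ and ${\bf y_0}$ (using an $\varepsilon$-optimal control for the problem starting at ${\bf y_0}$) yields the absolute-value bound. There is essentially no serious obstacle here, the only subtle point being that one must argue on $\varepsilon$-optimal open-loop controls (since the existence of a strict minimizer is not asserted at this stage) and rely on the equivalence between feedback and open-loop formulations already invoked in Lemma \ref{lem:D2V}.
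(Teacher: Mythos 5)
Your argument is essentially identical to the paper's: both couple the two trajectories via the same open-loop control and Brownian motion, observe that the running cost cancels, and bound the terminal-cost increment through $\sup_i \|D_{x_i} g^N\|_\infty \le C_G/N$ to obtain the $\|\cdot\|_{2,1}$ estimate. The only cosmetic difference is that you work with $\varepsilon$-optimal controls and a fundamental-theorem-of-calculus integral, whereas the paper invokes an open-loop optimizer directly and the coordinatewise Lipschitz bound on $g^N$; the substance is the same.
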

\begin{proof}
    First notice that the control problem under study is equivalent when posed over open-loop controls.\\
 We recall the definition of the cost function J as 
    \begin{align*}
        J(t,{\bf{x}},{\bf{a}})=\E\biggl[\int_t^T\biggl(\sum_i\frac{1}{2N}(\vert a^{i}_s\vert^2+f^N({\bf{a}}_s)\biggr)ds+g^N({\bf{X^{a}_T}})\biggr]\,,
    \end{align*}
 where ${\bf a} = (a^1, \ldots, a^n) $ is a vector of square-integrable, adapted processes such that each $a^i$  is adapted to the  filtration $\mathbb{F}$  supporting N $d$-dimensional independent Wiener processes $\bigl((W^i_t)_{0 \leq t \leq T}\big)_{1\leq i\leq N}$, and where ${\bf{X^{a}}}$ satisfies the following SDE 
    \begin{align*}
        dX^{i,a}_t= a^i_tdt + dW^{i}_t\,,
    \end{align*} with ${\bf X_t^a}={\bf x}\,.$\\
    
    Let ${\bf x}_0, {\bf y}_0 \in (\mathbb{R}^d)^N$. Then, for all ${\bf a}(\cdot) \in \mathcal{A},$
\begin{align*}
    V^N(t_0,{\bf y}_0)\leq J(t_0,{\bf y_0},{\bf a})=J(t_0,{\bf x}_0,{\bf a})+ \E\biggl[g^N({\bf X_T^{a,y_0}})-g^N({\bf X_T^{a,x_0}})\biggr]\,.
\end{align*}
In particular for ${\bf a}={\bf a^*}$ denoting an open-loop optimizer of the control problem,
\begin{align*}
    V^N(t_0,{\bf y_0})\leq V^N(t_0,{\bf x_0})+ \E\biggl[g^N({\bf X_T^{a^*,y_0}})-g^N({\bf X_T^{a^*,x_0}})\biggr]\,.
\end{align*}

\begin{align*}
     V^N(t_0,{\bf y_0})-V^N(t_0,{\bf x_0}) &\leq \E(g^N({\bf X_T^{a^*,y_0}})-g^N({\bf X_T^{a^*,x_0}}))\\
    & \leq \E \big (\sum_i\Vert D_ig^N\Vert_{\infty}\vert X_T^{a^*,y_0,i}-X_T^{a^*,x_0,i}\vert\big )\\
    & \leq \frac{C_G}{N}\E \Vert {\bf X_T^{a^*,y_0}}-{\bf X_T^{a^*,x_0}} \Vert_{2,1}\\
    &\leq \frac{C_G}{N} \Vert {\bf y_0}-{\bf x_0} \Vert_{2,1}\,.
\end{align*}
where the third inequality comes from item 4. of Assumption \ref{hyp:fG} and the fourth because for each $i \in \llbracket 1,N \rrbracket,$
$$X^{a^*,x_0,i}_T=x_0^i+\int_{t_0}^Ta^{*,i}_udu+W_T^i-W_{t_0}^i\,,$$

$$X^{a^*,y_0,i}_T=y_0^i+\int_{t_0}^Ta^{*,i}_udu+W_T^i-W_{t_0}^i \,.$$

Therefore, $\vert V^N(t_0,{\bf y_0})-V^N(t_0,{\bf x_0})\vert \leq \frac{C_G}{N} \Vert {\bf y_0}-{\bf x_0} \Vert_{2,1}\,.$
\end{proof}

We then introduce two functions ${\bf \hat a}$ and ${\bf \check a} $ and provide some information and results.
\begin{defn}
    \label{def:hat_a_pt}
    For each ${\bf p} \in (\R^d)^N,$ define
\begin{align*}
    \hat{a}({\bf p}):=\text{argmax}_{{\bf a}\in (\mathbb{R}^d)^N}-\sum_{i=1}^N (p^i a^i +\frac{1}{2N} |a^i|^2) -f^N({\bf a})\,.
\end{align*}
In other words, $\hat{a}({\bf p})$ is the optimal solution to the problem 
$$H^N((p^i)_{i=1,\dots, N}):= \sup_{{\bf a}\in (\R^d)^N} -\sum_{i=1}^N (p^i a^i +\frac{1}{2N} |a^i|^2) -f^N({\bf a})\,.$$
\end{defn}

\begin{rmk}
\label{rmk:hat_a_fx}
One can notice that, for ${\bf p}=(p^i=p^i(\bx))_{i=1,\dots, N} \in (L^\infty((\R^d)^N))^N$,the function $\hat{a} \circ p$ is an optimal solution for \begin{align*}
\int_{(\R^d)^N} H^N&((p^i(x))_{i=1, \dots, N}){\bf m}(d\bx)\\ &= \sup_{{\bf a} \in \bigl(L^2_{{\bf m}}\big (\mathbb{R}^{dN}\bigr)\bigr)^N}\int_{(\R^d)^N} -\big ((\sum_i a^i(\bx) p^i(\bx) 
 +\frac{1}{2N} |a^i(\bx)|^2) + f^N({\bf a(x)})\big ){\bf m}(d\bx)\,,\end{align*}
where $\hat{a}$ is given in Definition \ref{def:hat_a_pt}.
\end{rmk}

\begin{defn}
\label{def:check_alpha}
    For $(q^i=q^i(x^i))_{i=1,\dots, N} \in (L^\infty(\R^d))^N$ and ${\bf m} \in (\mathcal P_2(\R^d))^N$, define 
    \begin{align*}
        \check  a({\bf q},{\bf m}) :=\text{argmax}_{{\bf a} \in \Pi_{i=1}^NL^2_{m^i}(\mathbb{R}^d)}-l({\bf a}, {\bf q}, {\bf m})\,,
    \end{align*}
    where
    $$
    l({\bf a}, {\bf q}, {\bf m}):=\int_{(\R^d)^N} (\sum_i (a^i(x^i) q^i(x^i)+\frac{1}{2N} |a^i(x^i)|^2) + f^N({\bf a(x)})) {\bf m}(d{\bf x})\,.$$
    In other words, $\check  a({\bf q},{\bf m})$ is the optimal solution to the problem 
    \begin{align*}
    \mathcal H^N&((q^i)_{i=1,\dots, N}, {\bf m}) \\&:= \sup_{{\bf a} \in \Pi_{i=1}^NL^2_{m^i}(\mathbb{R}^d)} -\int_{(\R^d)^N} (\sum_i (a^i(x^i) q^i(x^i)+\frac{1}{2N} |a^i(x^i)|^2) + f^N({\bf a(x)})) {\bf m}(d{\bf x})\,. 
\end{align*}
\end{defn}

\begin{rmk}
    $l$ is coercive and strictly convex with respect to ${\bf a}$ in $\Pi_{i=1}^NL^2_{m^i}(\mathbb{R}^d)$ , thus it admits a unique minimizer in $\Pi_{i=1}^NL^2_{m^i}(\mathbb{R}^d)$. Therefore, for any ${\bf q}\in (L^\infty(\R^d))^N$ and ${\bf m} \in (\mathcal P_2(\R^d))^N$, $\check  a({\bf q},{\bf m})$ is well defined in $\Pi_{i=1}^NL^2_{m^i}(\mathbb{R}^d)$. 
\end{rmk}
We will first give the necessary conditions of optimality satisfied by ${\bf \hat a}$ and ${\bf \check a} $. \\

Let ${\bf p} \in (\R^d)^N$ and, recalling Definition \ref{def:hat_a_pt}, let  ${\bf \hat a({\bf p})}$ be the optimal solution of the problem $H^N((p^i)_{i=1,\dots, N})\,.$ \\
Consequently, for each $i \in \llbracket 1,N\rrbracket$, $\hat a^i({\bf p})$ satisfies the fixed point relation: 
\begin{align}
\label{defhatapt}
\hat a^i({\bf p}) = -N p^i - N \partial_if^N \big ((\hat a^j({\bf p}))_{j=1, \dots N}\big ).
\end{align}

Similarly, ${\bf \check a}:={\bf \check a(p, {\bf m})}$, given in Definition \ref{def:check_alpha}, is an optimal solution of the problem $\mathcal H^N((p^i)_{i=1,\dots, N}, {\bf m})$. Then, for each $i \in \llbracket 1,N\rrbracket$, $\check a^i$ is characterized by the fixed point relation: 

\be\label{defchecka}
\check a^i(x^i) = -N p^i(x^i) - N  \int_{(\R^d)^{N-1}} \partial_if^N (\check a^i(x^i),\big(\check a^j(y^j)\big )_{j\neq i}){\bf m}^{-i}(d\by^{-i}),
\ee
for $x^i \in \operatorname{supp}(m^i)\,.$ By convention, we extend $\check a^i$ so that it satisfies the relation \eqref{defchecka} even outside the support of $m^i\,.$
\\

We next state some Lipschitz continuity for ${\bf \hat a}$ and ${\bf \check a} $.
\begin{lem}
    \label{lem:bd_diff_a}
    Recalling Definition \ref{def:hat_a_pt}, let, for ${\bf p} \in (\R^d)^N,$ ${\bf \hat a(p)}$ be the optimal solution to the problem 
    \begin{align*}
H^N((p^i)_{i=1,\dots, N}) = \sup_{{\bf a}\in (\R^d)^N} -\sum_{i=1}^N (p^i a^i +\frac{1}{2N} |a^i|^2) -f^N({\bf a})\,.
\end{align*}
    Then, for all ${\bf p, \tilde p} \in (\R^d)^N,$
    $$
    \sum_i\vert\hat a^i({\bf{p}}) - \hat a^i({\bf{\tilde p}}) \vert^2\leq N^2\sum_i\vert p^i - \tilde p^i\vert ^2\,.$$
\end{lem}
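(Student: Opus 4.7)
I will deduce the Lipschitz estimate from the first-order optimality condition \eqref{defhatapt} for $\hat a({\bf p})$, exploiting the convexity of $f^N$ (a consequence of items 2 and 3 of Assumption \ref{hyp:fG}) in the form of monotonicity of $\nabla f^N$. The underlying geometric fact is that the objective
\[
{\bf a}\mapsto \sum_i\Bigl(p^i a^i + \frac{1}{2N}|a^i|^2\Bigr) + f^N({\bf a})
\]
that defines $H^N$ is $\tfrac{1}{N}$-strongly convex on $(\R^d)^N$, with $\hat a({\bf p})$ as its unique minimizer, so a Lipschitz estimate with Lipschitz constant $N$ on the minimizer as a function of the linear perturbation ${\bf p}$ is to be expected.

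\textbf{Steps.} I would first write the fixed-point relation \eqref{defhatapt} for both ${\bf p}$ and ${\bf \tilde p}$ and subtract, obtaining for each $i \in \llbracket 1,N \rrbracket$
\[
\hat a^i({\bf p}) - \hat a^i({\bf \tilde p}) = -N(p^i - \tilde p^i) - N\bigl[\partial_i f^N(\hat a({\bf p})) - \partial_i f^N(\hat a({\bf \tilde p}))\bigr].
\]
Taking the Euclidean inner product in $\R^d$ of this identity with $\hat a^i({\bf p}) - \hat a^i({\bf \tilde p})$ and summing over $i$ gives
\[
\sum_i |\hat a^i({\bf p}) - \hat a^i({\bf \tilde p})|^2 + N\sum_i\bigl(\partial_i f^N(\hat a({\bf p})) - \partial_i f^N(\hat a({\bf \tilde p}))\bigr)\cdot(\hat a^i({\bf p}) - \hat a^i({\bf \tilde p})) = -N\sum_i (p^i - \tilde p^i)\cdot(\hat a^i({\bf p}) - \hat a^i({\bf \tilde p})).
\]
Since $f^N$ is convex on $(\R^d)^N$, $\nabla f^N$ is monotone, so the second term on the left-hand side is nonnegative and can be dropped. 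Applying the Cauchy--Schwarz inequality to the right-hand side and dividing yields
\[
\sqrt{\sum_i |\hat a^i({\bf p}) - \hat a^i({\bf \tilde p})|^2} \le N\,\sqrt{\sum_i |p^i - \tilde p^i|^2},
\]
which, after squaring, is exactly the claim.

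\textbf{Main obstacle.} There is essentially no obstacle: the estimate is a standard consequence of the $\tfrac{1}{N}$-strong convexity of the objective defining $H^N$ combined with monotonicity of $\nabla f^N$. The only point worth verifying is the nonnegativity of the monotonicity term, which follows at once from Assumption \ref{hyp:fG}: $f_0$ and each $h_{ij}$ are convex, so $f^N$ is convex as a sum of convex functions of ${\bf a}$, giving $\langle \nabla f^N({\bf a}) - \nabla f^N({\bf b}),\,{\bf a}-{\bf b}\rangle \geq 0$ for all ${\bf a},{\bf b} \in (\R^d)^N$.
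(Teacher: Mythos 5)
Your proof is correct and follows essentially the same route as the paper's: both start from the fixed-point relation \eqref{defhatapt}, take the inner product of the difference with $\hat a^i({\bf p}) - \hat a^i({\bf \tilde p})$ and sum over $i$, drop the nonnegative term coming from monotonicity of $\nabla f^N$ (convexity of $f^N$), and conclude by Cauchy--Schwarz. The only cosmetic difference is that you explicitly isolate the strong-convexity interpretation and move the monotonicity term to the left-hand side, whereas the paper keeps it on the right and drops it as a nonpositive contribution.
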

\begin{proof}
 As mentioned above, ${\bf \hat a(p)}$ satisfies the fixed-point equation \ref{defhatapt}.
Thus,    \begin{align*}
    \sum_i\vert\hat a^i({\bf{p}}) - \hat a^i({\bf{\tilde p}}) \vert^2&=-N\sum_i\big (\hat a^i({\bf{p}}) - \hat a^i({\bf{\tilde p}})\big)\big ( p^i - \tilde p^i+\partial_if(\hat a({\bf{p}}))-\partial_if(\hat a({\bf{\tilde p}})) \big)\\
    &=N\sum_i\big (\hat a^i({\bf{p}}) - \hat a^i({\bf{\tilde p}})\big)\big ( \tilde p^i - p^i\big)\\
    &-N\sum_i\big (\hat a^i({\bf{p}}) - \hat a^i({\bf{\tilde p}})\big)\big ( \partial_if( {\bf{\hat a(p)}})-\partial_if({\bf{\hat a(\tilde p)}}) \big)\\
    &\leq N\sum_i\big (\hat a^i({\bf{p}}) - \hat a^i({\bf{\tilde p}})\big)\big ( \tilde p^i - p^i\big)\\
    &\leq N\sqrt{\sum_i\vert\hat a^i({\bf{p}}) - \hat a^i({\bf{\tilde p}})  \vert^2}\sqrt{\sum_i\vert\tilde p^i - p^i\vert^2}\,,
\end{align*}
where the first inequality comes from the convexity of $f^N$ according to item 2. and 3. of Assumption \ref{hyp:fG}.
\end{proof}
\begin{lem}
    \label{lem:bd_diff_check_a}
    Let, for $(p^i=p^i(x^i))_{i=1,\dots, N} \in (L^\infty(\mathbb{R}^d))^N$ and ${\bf m} \in (\mathcal P_2(\mathbb{R}^d))^N,$
    ${\bf \check a(p,m)}$ be the optimal solution to the problem 
$$
\mathcal H^N((p^i)_{i=1,\dots, N}, {\bf m}) = \sup_{(a^i=a^i(x^i))} -\int_{(\R^d)^N} (\sum_i (a^i(x^i) p^i(x^i)+\frac{1}{2N} |a^i(x^i)|^2) + f^N({\bf a(\bx)})) {\bf m}(d{\bf x}). 
$$
    Then, for all ${\bf x, \tilde x} \in (\mathbb{R}^d)^N,$ and $i \in \llbracket 1,N \rrbracket$
    $$
    \vert \check a^i({\bf p, m})(x^i) - \check a^i({\bf{ p,m}})(\tilde x^i) \vert \leq N\vert p^i(x^i) -   p^i(\tilde x^i)\vert\,.$$
\end{lem}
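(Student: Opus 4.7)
The plan is to mimic the proof of Lemma \ref{lem:bd_diff_a} but pointwise in $x^i$ instead of after integration. The two key inputs are the fixed-point relation \eqref{defchecka} satisfied by $\check a^i$ at every $x^i\in\R^d$ (thanks to the extension convention stated just after \eqref{defchecka}), and the monotonicity of $a^i \mapsto \partial_i f^N(a^i,\cdot)$, which is a direct consequence of convexity of $f^N$ (items 2 and 3 of Assumption \ref{hyp:fG}).

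First I would evaluate \eqref{defchecka} at $x^i$ and at $\tilde x^i$ and subtract. Writing $\Delta := \check a^i({\bf p},{\bf m})(x^i) - \check a^i({\bf p},{\bf m})(\tilde x^i)$, this gives
\[
\Delta = -N\bigl(p^i(x^i)-p^i(\tilde x^i)\bigr) - N\!\int_{(\R^d)^{N-1}}\!\!\Bigl[\partial_i f^N\bigl(\check a^i(x^i),(\check a^j(y^j))_{j\neq i}\bigr)-\partial_i f^N\bigl(\check a^i(\tilde x^i),(\check a^j(y^j))_{j\neq i}\bigr)\Bigr]{\bf m}^{-i}(d\by^{-i}).
\]
Taking the scalar product with $\Delta$ leaves $|\Delta|^2$ on the left-hand side. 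For the integral term, convexity of $f^N$ implies that, for each frozen tuple $(a^j)_{j\neq i}$, the map $a^i\mapsto \partial_i f^N(a^i,\cdot)$ is monotone, so the integrand $\bigl[\partial_i f^N(\check a^i(x^i),\cdot)-\partial_i f^N(\check a^i(\tilde x^i),\cdot)\bigr]\cdot\Delta$ is nonnegative pointwise in $\by^{-i}$. After the factor $-N$ this contribution is nonpositive and can be dropped.

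What remains is
\[
|\Delta|^2 \,\leq\, -N\bigl(p^i(x^i)-p^i(\tilde x^i)\bigr)\cdot \Delta \,\leq\, N\,\bigl|p^i(x^i)-p^i(\tilde x^i)\bigr|\,|\Delta|
\]
by Cauchy--Schwarz, and dividing by $|\Delta|$ (the case $\Delta=0$ being trivial) yields the claim. I do not expect a genuine obstacle here: the only delicate point is that \eqref{defchecka} must be available for every $x^i\in\R^d$, which is precisely the purpose of the convention extending $\check a^i$ outside $\operatorname{supp}(m^i)$, and the monotonicity of $\partial_i f^N$ in its $i$-th slot that we exploit is nothing other than the convexity of the $C^2$ function obtained by restricting $f^N$ to an affine line in the $i$-th coordinate.
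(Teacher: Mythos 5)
Your proof is correct and follows essentially the same route as the paper: subtract the fixed-point relation \eqref{defchecka} at the two points, take the inner product with $\Delta$, drop the nonpositive term coming from the monotonicity of $a^i\mapsto\partial_i f^N(a^i,\cdot)$ (i.e., convexity of $f^N$ in its $i$-th block), and finish with Cauchy--Schwarz. Your explicit remark about the extension convention being what makes the fixed-point identity available for all $x^i$ is a useful clarification that the paper leaves implicit in the proof body.
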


\begin{proof}
The definition of ${\bf \check a}:={\bf \check a(p,m)}$ implies that it satisfies the fixed-point equation, for all ${\bf x} \in (\mathbb{R}^d)^N$: $$
\check a^i(x^i) = -N p^i(x^i) - N  \int_{(\mathbb{R}^d)^{N-1}} \partial_if^N ((\check a^j(x^i, {\bf y}^{-i}))_{j=1, \dots N}){\bf m}^{-i}(d{\bf y}^{-i}).
$$ 
Let ${\bf x}$,$ {\bf \tilde x} \in (\mathbb{R}^d)^N\,,$
    \begin{align*}
    &\vert \check a^i(x^i) - \check a^i(\tilde x^i) \vert^2\\
    &=-N\big (\check a^i(x^i) - \check a^i(\tilde x^i)\big)\cdot\big ( p^i(x^i) -  p^i(\tilde x^i)\big )\\
    & \quad -N\big (\check a^i(x^i) - \check  a^i(\tilde x^i)\big)\\
    & \qquad  \cdot \int_{(\mathbb{R}^d)^{N-1}} \big (\partial_if^N (\check a^i( x^i),(\check a^j(y^j))_{j \neq i})-\partial_if^N (\check a^i(\tilde x^i),(\check a^j(y^j))_{j \neq i}){\bf m}^{-i}(d{\bf y}^{-i})\\
    &\leq N\big (\check a^i(x^i) - \check a^i(\tilde x^i)\big)\cdot \big (p^i(\tilde x^i) - p^i( x^i)\big )\\
    &\leq N\vert \check a^i(x^i) - \check a^i(\tilde x^i)\vert  \vert  p^i(\tilde x^i) - p^i( x^i)\vert \,,
\end{align*}
where the first inequality comes from the convexity of $f^N$ with respect to each of its variables.
\end{proof}

We will now prove that, for each i, $\check a({\bf p}, {\bf m})^i$ is bounded under some condition on ${\bf p}\in (L^\infty(\R^d))^N$.
\begin{lem}
    \label{lem:check_a_bdd}
Let ${\bf m} \in \mathcal{P}_2(\mathbb{R}^d)^N,$ and  $\boldsymbol{p}=(p^i)_i \in (L^\infty(\mathbb{R}^d))^N$ satisfying 
\[
      \|p^i\|_{\infty}\leq \frac{C_G}{N} \; \forall i\,. 
\]
Let
$\check a({\bf p}, {\bf m})$ be introduced in Definition  \ref{def:check_alpha}.\\

Then, $\forall i \in \llbracket 1,N \rrbracket$, $\Vert \check a^i({\bf p},{\bf m}) \Vert_{L^\infty}\leq C_G+\Vert Df_0\Vert_{\infty}\,.$
\end{lem}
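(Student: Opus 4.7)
My strategy is to apply a maximum-principle-type argument to the fixed-point equation \eqref{defchecka}, exploiting the radial structure of the interaction kernels $h_{ij}$. Set $M := \max_{i} \|\check a^i({\bf p}, {\bf m})\|_{L^\infty}$. First I would verify that $M<\infty$: by construction $\check a^i \in L^2(m^i)$, and Lemma \ref{lem:bd_diff_check_a} combined with $\|p^i\|_\infty \leq C_G/N$ yields the oscillation bound $|\check a^i(x^i)-\check a^i(\tilde x^i)| \leq 2 C_G$ for a.e.\ $x^i, \tilde x^i \in \supp(m^i)$, hence $\check a^i \in L^\infty(m^i)$.

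Using $\tilde h_{ij}(0) = \tilde h_{ij}'(0) = 0$, a direct computation yields $N\partial_i f^N({\bf a}) = Df_0\bigl(\tfrac{1}{N}\sum_k a^k\bigr) + T_i({\bf a})$, where
\begin{equation*}
T_i({\bf a}) := \frac{1}{N}\sum_{j \neq i}\bigl[\tilde h_{ij}'(r_{ij})+\tilde h_{ji}'(r_{ij})\bigr]\frac{a^i-a^j}{r_{ij}},\qquad r_{ij}:=|a^i-a^j|,
\end{equation*}
with the convention that the vector vanishes at $r_{ij}=0$. For $\epsilon>0$, I would pick $i^*$ and $x^{i^*}\in\supp(m^{i^*})$ at which \eqref{defchecka} holds and $|\check a^{i^*}(x^{i^*})|\geq M-\epsilon$, and set $v := \check a^{i^*}(x^{i^*})/|\check a^{i^*}(x^{i^*})|$. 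Taking the scalar product of \eqref{defchecka} with $v$ then gives
\begin{equation*}
|\check a^{i^*}(x^{i^*})| \leq N\|p^{i^*}\|_\infty + \|Df_0\|_\infty - v\cdot\int T_{i^*}(\check a(x^{i^*},{\bf y}^{-i^*}))\,{\bf m}^{-i^*}(d{\bf y}^{-i^*}).
\end{equation*}

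The core step is to bound the remaining $T_{i^*}$ term from below. For $m^j$-a.e.\ $y^j$ one has $\check a^j(y^j)\cdot v \leq |\check a^j(y^j)| \leq M$, whereas $|\check a^{i^*}(x^{i^*})| \geq M-\epsilon$, so $(\check a^{i^*}(x^{i^*})-\check a^j(y^j))\cdot v \geq -\epsilon$. Combined with $\tilde h_{ij}'(r) \leq Kr$, where $K:=\max_{ij}\|D^2 h_{ij}\|_\infty$ (this follows from $\tilde h_{ij}'(0)=0$ and boundedness of $\tilde h_{ij}''$), each summand in $v\cdot T_{i^*}$ is at least $-2K\epsilon$, hence $v\cdot\int T_{i^*}\,d{\bf m}^{-i^*} \geq -2K\epsilon$. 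Substituting yields $M-\epsilon \leq C_G + \|Df_0\|_\infty + 2K\epsilon$, and letting $\epsilon\to 0$ closes the argument. The main obstacle is that $T_{i^*}$ has no a priori sign, and a naive triangle inequality would pick up a contribution growing with $M$ itself; the cornerstone is that the radial structure $h_{ij}(z)=\tilde h_{ij}(|z|)$ with $\tilde h_{ij}$ nondecreasing, together with the choice of $v$ in the direction of a near-maximum of $|\check a^{i^*}|$, makes the projection $v\cdot T_{i^*}$ asymptotically nonnegative.
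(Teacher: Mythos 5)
Your argument is correct in its essentials and reaches the same bound, but by a genuinely different mechanism than the paper. The paper runs a variational/truncation argument: it introduces the pointwise projection $\tilde a^i$ of $\check a^i$ onto the closed ball $B(0, C_G + \|Df_0\|_\infty)$ and shows $l(\tilde a,{\bf p},{\bf m}) \leq l(\check a,{\bf p},{\bf m})$ by combining (i) the $1$-Lipschitz property of the projection, which gives $|\tilde a^i - \tilde a^j| \leq |\check a^i - \check a^j|$ and hence decreases each $\tilde h_{ij}(\cdot)$ term by monotonicity, with (ii) convexity/Lipschitz estimates showing the $p^i\cdot a^i$, quadratic, and $f_0$ terms also favor the truncated control; strict convexity of $l$ then forces $\check a = \tilde a$ $m$-a.e. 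Your route is instead a maximum-principle argument on the first-order condition \eqref{defchecka}: you locate a near-maximum of $|\check a^{i^*}|$, project the identity onto the direction $v$ of that near-maximum, and use the radial form $Dh_{ij}(z) = \tilde h_{ij}'(|z|)z/|z|$ with $\tilde h_{ij}' \geq 0$ and $\tilde h_{ij}'(r)\le \|D^2 h_{ij}\|_\infty\, r$ to show the pairwise term contributes only an $O(\epsilon)$ error of the favorable sign. Both proofs hinge on the same structural features — the radial nondecreasing form of $h_{ij}$ — with yours being a pointwise maximum-principle proof and the paper's a competitor-comparison proof; the paper's version avoids the $\epsilon$-extraction and identifies $\check a$ exactly with the truncation, while yours is arguably more illuminating about why the first-order system itself enforces the bound. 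Two small points of precision to tidy: first, each of the $N-1$ summands of $v\cdot T_{i^*}$ is bounded below by $-2K\epsilon/N$ (the $1/N$ prefactor already sits inside $T_i$), not by $-2K\epsilon$; it is the full sum that is $\geq -2K\epsilon$, so the final substitution is still correct. Second, you should note explicitly that the near-maximum point $x^{i^*}$ can be chosen inside the full-measure set on which \eqref{defchecka} holds (these fixed-point relations come from the variational characterization and hold $m^{i^*}$-a.e.), so that taking the inner product with $v$ is legitimate.
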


\begin{proof}

Fix ${\bf m} \in \mathcal{P}_2(\mathbb{R}^d)^N$ and ${\bf p} \in (L^{\infty}(\mathbb{R}^d))^N $.\\

Given ${\bf a}=(a^i)_i \in (L^2(\mathbb{R}^d))^N$, recall the definition of function $l$ given in Definition \ref{def:check_alpha}
\begin{align*}
l({\bf a}&,{\bf p},{\bf m}) =  \int_{(\mathbb{R}^d)^N} \biggl( \sum_i \left( a^i(x^i) p^i(x^i) + \frac{1}{2N} |a^i(x^i)|^2 \right) + f^N\big ({\bf a(x)}\big) \biggr) {\bf m(d x}) \\
&= \int_{(\mathbb{R}^d)^N} \biggl( \sum_i \left( a^i(x^i) p^i(x^i) + \frac{1}{2N} |a^i(x^i)|^2 \right) + f_0(\frac{1}{N}\sum_ia^i(x^i))\\
&\qquad \qquad+\frac{1}{N^2} \sum_{i,j} h_{ij}( a^i(x^i) - a^j(x^j) ) \biggr) {\bf m(d x})\\
&= \int_{(\mathbb{R}^d)^N} \biggl( \sum_i \left( a^i(x^i) p^i(x^i) + \frac{1}{2N} |a^i(x^i)|^2 \right) + f_0(\frac{1}{N}\sum_ia^i(x^i))\\
&\qquad \qquad+\frac{1}{N^2} \sum_{i,j} \tilde h_{ij}(\vert a^i(x^i) - a^j(x^j)\vert ) \biggr) {\bf m(d x})\,.
\end{align*}

Let ${\bf \check a(p,m)} \in (L^2(\mathbb{R}^d))^N$ be a maximizer of $-l(\cdot,{\bf p},{\bf m})\,,$ as defined in Definition \ref{def:check_alpha}, and denote it by ${\bf \check a}$ for simplicity.\\
Define $M:=C_G+\Vert Df_0\Vert_{\infty}\,$
and ${\bf \tilde a} \in (L^2(\mathbb{R}^d))^N$ such that, for any $i \in \llbracket 1,N \rrbracket$ and $ x^i \in \mathbb{R}^d$, 

\begin{align*}
\tilde{a}^i(x^i) := 
\begin{cases}
\check a^i(x^i) & \text{if } \vert a^i(x^i)\vert \leq M\,, \\
M \frac{\check a^i(x^i)}{\vert \check a^i(x^i)\vert }& \text{otherwise.} 
\end{cases}
\end{align*}
Notice that, for any $i \in \llbracket 1,N\rrbracket\,,$ and $x^i \in \R^d\,,$  $\tilde a^i(x^i)$ is the projection of $\check a^i(x^i)$ onto the closed convex set $B(0, M)$. 
Because any projection is $1-$Lipschitz, we get that for all $(i,j) \in \llbracket 1,N\rrbracket^2\,, $ and $(x^i,x^j) \in (\R^d)^2\,,$
$$
|\tilde a^i(x^i)-\tilde a^j(x^j)|\leq |\check a^i(x^i)-\check a^j(x^j)|.
$$
 According to Item 2. of Assumption \ref{hyp:fG}, the functions $(\tilde h_{ij})_{(i,j) \in \llbracket 1,N \rrbracket^2}$ are nondecreasing, so
\begin{align}
\label{ineq:hij}
 \tilde h_{ij}(|\tilde a^i(x^i)-\tilde a^j(x^j)|)\leq  \tilde h_{ij}(|\check a^i(x^i)-\check a^j(x^j)|). 
\end{align}
Moreover, using, for any $i \in \llbracket 1,N\rrbracket\,,$ and $x^i \in \R^d$, that $a^i \to p^i(x^i)\cdot a^i+\frac{1}{N}\vert a^i\vert^2 $ is convex and $f_0$ is Lipschitz by Item 3. of Assumption \ref{hyp:fG}, we have

\begin{align*}
& \sum_i (p^i(x^i)\cdot \check a^i(x^i) +\frac{1}{2N} |\check a^i(x^i)|^2) + f_0(\frac{1}{N} \sum_i \check  a^i(x^i))  \\
& \geq \sum_i (p^i(x^i)\cdot \tilde a^i(x^i) +\frac{1}{2N} |\tilde a^i(x^i)|^2) + f_0(\frac{1}{N} \sum_i \tilde  a^i(x^i)) +\sum_i p^i(x^i)\cdot (\check a^i(x^i)-\tilde a^i(x^i))\\
& \qquad  +\sum_i\frac{1}{N} \tilde a^i(x^i)\cdot(\check a^i(x^i)-\tilde a^i(x^i)) -\|Df_0\|_\infty\frac{1}{N} \sum_i |\check a^i(x^i)-\tilde a^i(x^i)| . 
\end{align*}
By assumption,  $\Vert p^i\Vert_{\infty}\leq \frac{C_G}{N}\,,$ and noticing that   $\tilde a^i(x^i)\cdot (\check a^i(x^i)-\tilde a^i(x^i))= M |\check a^i(x^i)-\tilde a^i(x^i)|$, we deduce that
\begin{align}
\label{ineq:p_f0}
& \sum_i (p^i(x^i)\cdot \check a^i(x^i) +\frac{1}{2N} |\check a^i(x^i)|^2) + f_0(\frac{1}{N} \sum_i \check  a^i(x^i))  \nonumber \\
& \geq  \sum_i (p^i(x^i)\cdot \tilde a^i (x^i)+\frac{1}{2N} |\tilde a^i(x^i)|^2) + f_0(\frac{1}{N} \sum_i \tilde  a^i(x^i))  \nonumber\\
& \qquad+ 
\frac{1}{N} \sum_i |\check a^i(x^i)-\tilde a^i(x^i)| (-C_G +M -\|Df_0\|_\infty)\nonumber \\
& \geq  \sum_i (p^i(x^i)\cdot \tilde a^i (x^i)+\frac{1}{2N} |\tilde a^i(x^i)|^2) + f_0(\frac{1}{N} \sum_i \tilde  a^i(x^i)). 
\end{align}

Thus, by summing over $(i,j)$ the inequality \eqref{ineq:hij} together with \eqref{ineq:p_f0} and by integrating over ${\bf m}\,,$ we get 
$$l({\bf \check{a}},{\bf p},{\bf m})\geq l({\bf \tilde a},{\bf p},{\bf m})\,.$$
By definition ${\bf \check{a}}\in \text{argmax}_{{\bf a}\in (L^2(\mathbb{R}^d))^N}-l({\bf a},{\bf p},{\bf m})$, we necessarily have $l({\bf \tilde a},{\bf p},{\bf m}) = l({\bf \check{a}},{\bf p},{\bf m})\,.$ By the strict convexity with respect to ${\bf a}$ of the function $l$ (because of the quadratic term and because $h_{ij}$ and $f_0$ are convex), we have ${\bf\check{a}}={\bf \tilde{a}}$ $m$-almost everywhere. 
So, we have that 
\begin{align*}
    \Vert \check{a}({\bf p,m}) \Vert_{L^\infty}=\Vert \tilde {a}({\bf p,m}) \Vert_{L^\infty}\leq C_G+ \Vert Df_0 \Vert_{\infty}\,.
\end{align*}

\end{proof}

\begin{rmk}
   Only at this stage does the specific form $h_{ij}=\tilde h_{ij} \circ |\cdot|\,,$ given in item 2 of Assumption \ref{hyp:fG}, play a role.
\end{rmk}
%# try to prove boundedeness of a without boundedness of the gradient of f.

\section{Proof of the main Theorem}

This section is dedicated to the proof Theorem \ref{thm:diff_V_O}.\\

The idea of the proof is to compare the value functions using a form of comparison principle. We thus detail the PDEs associated with the two control problems.

\begin{enumerate}
    \item The full-information control problem :\\
    
The control problem defined in \eqref{eq:pb0} is a centralized optimal control problem.  Its value function, defined in \eqref{def:valuefx0}, satisfies 
\begin{equation}
\label{eq:V}
-\partial_t V^N -\sum_{i=1}^N \Delta_{x^i} V^N +H^N(DV^N)=0, \qquad V^N(T,\bx )= G^N(\bx)
\end{equation}
where we recall
$$
H^N((p^i)_{i=1,\dots, N}) = \sup_{{\bf a}\in (\R^d)^N} -\sum_{i=1}^N (p^i a^i +\frac{1}{2N} |a^i|^2) -f^N({\bf a})\,.
$$
\\

\item The distributed control problem :\\

The second one is the decentralized control problem \eqref{eq:pbdist}. Its value function $\mathcal V_{dist}^N:[0,T]\times (\mathcal P_2(\R^d))^N\to \R$, defined in \eqref{def:valuefxd}, (formally) solves
\begin{align}
\label{eq:Vdist}
& -\partial_t \mathcal V_{dist}^N -\sum_{i=1}^N \int \dive_y (\big (D_{m^i}\mathcal V_{dist}^N( \boldsymbol \mu^{-i},\cdot)\big )_i) \mu^i +\mathcal H^N (\big (D_{m^i} \mathcal V_{dist}^N( \boldsymbol \mu^{-i},\cdot)\big )_i, \boldsymbol \mu) =0, \nonumber \\
& \mathcal V_{dist}^N(T, \boldsymbol \mu)= \int_{(\R^d)^N} g^N({\bf x})\boldsymbol \mu(d{\bf x}), 
\end{align} 
where, for any $(q^i=q^i(x^i))_{i=1,\dots, N} \in (L^\infty(\R^d))^N$, 
$$
\mathcal H^N((q^i)_{i=1,\dots, N}, \boldsymbol \mu) = \sup_{(a^i=a^i(x^i))} -\int_{(\R^d)^N} (\sum_i (a^i(x^i) q^i(x^i)+\frac{1}{2N} |a^i(x^i)|^2) + f^N({\bf a(x)})) \boldsymbol \mu(d{\bf x}). 
$$
\end{enumerate}

As mentioned in the introduction, to compare the two problems, we will focus on the lifted version of $V^N\,, \mathcal{V}^N : [0,T] \times \mathcal{P}_2(\mathbb{R}^d)^N\to \R\,,$ defined in \eqref{def:lift_valuefx}. Note that $\mathcal{V}^N$ (formally) solves 
\begin{align*}
& -\partial_t \mathcal V^N -\sum_{i=1}^N \int \dive_y (D_{m^i}\mathcal V^N( \boldsymbol \mu^{-i},\cdot)) d\mu^i +\int_{(\R^d)^N} H^N(D V^N)d\boldsymbol \mu =0, \\
& \mathcal V^N(T, \boldsymbol \mu)=  \int_{(\R^d)^N} g^N({\bf x})\boldsymbol \mu(d{\bf x}). 
\end{align*} 

\begin{rmk}
    By the definition of $\mathcal{V}^N$, the Lions derivative $D_{m^i}\mathcal{V}^N$ can be expressed explicitly in terms of $D_iV^N\,,$ namely, for any $t \in [0,T]\,,$ ${\bf m^{-i}} \in (\mathcal P_2(\R^d))^{N-1}\,,$ and $x \in \R^d\,,$
    \begin{align*}
        D_{m^i}\mathcal{V}^N(t,{\bf m^{-i}},x)=\int_{(\R^d)^{N-1}}D_iV^N(t,{\bf y^{-i}},x){\bf m^{-i}}(d{\bf y^{-i}})\,.
    \end{align*}
\end{rmk}

\subsection{A first estimate for $\vert \mathcal{V}^N_{dist}-\mathcal{V}^N\vert$}

In order to prove Theorem \ref{thm:diff_V_O}, we introduce,
for each $\boldsymbol \mu \in \mathcal{P}_2(\R^d)^N$, the error
 
\begin{align}
    \label{def:E}
    E^N(t,\boldsymbol \mu):=  \int_{(\R^d)^N} H^N(D V^N(t,x))\boldsymbol \mu(d\bx) - \mathcal H^N((D_{m^i} \mathcal V^N(t,\cdot))_{i=1,\dots, N}, \boldsymbol \mu). 
\end{align}

For any $\boldsymbol \mu \in \mathcal{P}_2(\mathbb{R}^d)^N$, let us also introduce the process ${\bf \check{X}^{t,\boldsymbol \mu}}$, which satisfies the following McKean-Vlasov stochastic differential equation (MV-SDE), for all $i = 1, \dots, N,$
\begin{align}
    \label{def:checkX}
    d\check{X}^{t,\boldsymbol \mu,i}_{s} = \check a^{i}((D_{m^i}\mathcal{V}^N(t,{\bf m_s^{-i}},\cdot))_i,{\bf m_s})\big(s,\check{X}^{t,\boldsymbol \mu,i}_{s}\big ) ds + dW^i_s, \quad s \in (t, T), \quad \mathcal{L}(\check{X}^{t,\boldsymbol \mu,i}_{t})=\mu^i
\end{align}
with
\[
{\bf m_s} = ( \mathcal{L}(\check{X}^{t,\boldsymbol \mu,1}_{s}), \dots, \mathcal{L}(\check{X}^{t,\boldsymbol \mu,N}_{s}) )\quad  \forall s \in [t, T)\,.
\] \\
As a direct consequence of Lemma 4.4 of \cite{jackson2023approximatelyoptimaldistributedstochastic}, there exists a unique strong solution to this MV-SDE  for any $(t,\boldsymbol \mu) \in [0,T] \times \mathcal{P}_2(\mathbb{R}^d)^N$.\\

Analyzing the error $E^N$ evaluated at ${\bf m}$ will be useful to bound the difference between the two value functions. This is shown in the following lemma.

\begin{lem}
    \label{lem:diffvalfx}
    For $(t, \boldsymbol \mu) \in [0, T) \times \mathcal{P}_2(\mathbb{R}^d)^N$, we have
\begin{align*}
0 \leq \mathcal{V}^N_{\text{dist}}(t, \boldsymbol \mu) - \mathcal{V}^N(t, \boldsymbol \mu) \leq  \int_t^T E^N(s, {\bf m}_s) \, ds,
\end{align*}
where $E^N$ is given by \eqref{def:E}, and ${\bf m}_s := \left( \mathcal{L}(\check{X}_s^{t,\boldsymbol \mu,1}), \ldots, \mathcal{L}(\check{X}_s^{t,\boldsymbol \mu,N}) \right)$ is the law of the solution ${\bf \check X^{t,\boldsymbol \mu}}$ of \eqref{def:checkX}.
\end{lem}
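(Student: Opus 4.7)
The plan is to prove the two inequalities separately. The lower bound is immediate from the inclusion $\mathcal{A}_{dist} \subset \mathcal{A}$: for any $\alpha \in \mathcal{A}_{dist}$ and any $\bx \in (\R^d)^N$, the full-information optimality of $V^N$ yields $\E_{\bx}[J_0(\alpha)] \geq V^N(t, \bx)$; integrating against $\boldsymbol \mu$ and using the definition of the lift \eqref{def:lift_valuefx} gives $\E_{\boldsymbol \mu}[J_0(\alpha)] \geq \mathcal V^N(t, \boldsymbol \mu)$, so that taking the infimum over $\alpha \in \mathcal{A}_{dist}$ yields $\mathcal V^N_{dist}(t, \boldsymbol \mu) \geq \mathcal V^N(t, \boldsymbol \mu)$.

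For the upper bound, I construct an explicit candidate in $\mathcal{A}_{dist}$ and evaluate the cost. I define $\check \alpha^i(s, \cdot) := \check a^i\bigl((D_{m^j}\mathcal V^N(s, \mathbf{m}_s^{-j}, \cdot))_{j=1, \dots, N}, \mathbf{m}_s\bigr)$ along the deterministic flow $\mathbf{m}_s$ of marginal laws of the solution $\check{\mathbf{X}}^{t, \boldsymbol \mu}$ to \eqref{def:checkX}; since, by Definition \ref{def:check_alpha}, $\check a^i$ depends only on the $i$-th spatial variable, we have $\check \alpha \in \mathcal{A}_{dist}$, and hence $\mathcal V^N_{dist}(t, \boldsymbol \mu) \leq J_0(\check \alpha)$ when the initial law is $\boldsymbol \mu$. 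Next, I apply the classical Itô formula to $s \mapsto V^N(s, \check{\mathbf{X}}_s^{t, \boldsymbol \mu})$, which is justified by the $C^2$ regularity of $V^N(s, \cdot)$ from Lemma \ref{lem:D2V} together with the uniform bound $\|DV^N\|_\infty \leq C_G/N$ from Lemma \ref{lem:Lip_V} (making the stochastic integral a true martingale), and I eliminate $\partial_s V^N$ via the HJB equation \eqref{eq:V}. After taking expectation, using $V^N(T, \cdot) = g^N$ and the definition of the lift, subtracting $\mathcal V^N(t, \boldsymbol \mu)$ from $J_0(\check \alpha)$ cancels both terminal costs and leaves a time integral whose integrand is
\begin{align*}
\E\Big[\tfrac{1}{2N}\sum_i |\check \alpha^i(s, \check X_s^i)|^2 + f^N(\check \alpha(s, \check{\mathbf{X}}_s)) + \sum_i \check \alpha^i(s, \check X_s^i) \cdot D_iV^N(s, \check{\mathbf{X}}_s) + H^N(DV^N(s, \check{\mathbf{X}}_s))\Big].
\end{align*}

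It remains to recognize $E^N$ inside this integrand. Since each $\check X^i$ is driven by an independent Brownian motion, starts from the independent marginal $\mu^i$, and is fed a drift depending only on its own state, the joint law of $\check{\mathbf{X}}_s$ factorizes as $\mathbf{m}_s = \mu^1_s \otimes \cdots \otimes \mu^N_s$. In particular, the cross term rewrites as $\sum_i \int \check \alpha^i(s, y) \cdot D_{m^i}\mathcal V^N(s, \mathbf{m}_s^{-i}, y)\, \mu^i_s(dy)$ using the expression of $D_{m^i}\mathcal V^N$ as a partial expectation of $D_iV^N$. By the characterization of $\check a$ as the maximizer in $\mathcal H^N$ (Definition \ref{def:check_alpha}), the running cost together with this cross term, integrated against $\mathbf{m}_s$, equals $-\mathcal H^N((D_{m^i}\mathcal V^N(s, \mathbf{m}_s^{-i}, \cdot))_i, \mathbf{m}_s)$; combining with the remaining $\int H^N(DV^N(s, \cdot))\, d\mathbf{m}_s$ reproduces exactly $E^N(s, \mathbf{m}_s)$ from \eqref{def:E}. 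The main subtle point is precisely this last rewriting: one must convert the full-information cross term $\E[\sum_i \check \alpha^i \cdot D_iV^N]$ arising naturally from pathwise Itô into the Wasserstein/distributed cross term featured in $\mathcal H^N$, and this works only because $\check \alpha$ being distributed and the BMs and initial marginals being independent forces $\mathbf{m}_s$ to remain a product measure for all $s \geq t$.
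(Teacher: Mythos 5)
Your proof is correct and reaches the same bound, but by a more self-contained route than the paper. The paper's proof relies on two external results from \cite{jackson2023approximatelyoptimaldistributedstochastic} (Lemma~4.14 and the verification Proposition~3.4): it first shows, via that abstract verification machinery on $\mathcal{P}_2(\mathbb R^d)^N$, that the lifted value $\mathcal V^N$ itself equals the infimum of a distributed control problem with the modified running cost $\tfrac1{2N}\sum_i|\alpha^i|^2+f^N(\boldsymbol\alpha)-E^N$, then observes that the McKean--Vlasov feedback $\alpha^*$ attains this infimum and concludes. Your argument instead stays at the level of $V^N$ on $(\mathbb R^d)^N$: you take the same candidate control $\check\alpha$, apply the classical It\^o formula to $s\mapsto V^N(s,\check{\mathbf X}_s)$, use the finite-dimensional HJB \eqref{eq:V} to remove $\partial_sV^N+\sum_i\Delta_{x^i}V^N$, and then recognize $E^N$ after converting the pathwise cross term $\E[\sum_i\check\alpha^i\cdot D_iV^N]$ into $\sum_i\int\check\alpha^i\cdot D_{m^i}\mathcal V^N\,d\mu^i_s$ via the product structure of $\mathbf m_s$ and the maximizer characterization in Definition~\ref{def:check_alpha}. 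This unfolds in elementary terms what the cited verification result is doing under the hood; the key point you rightly flag --- that the product structure of $\mathbf m_s$ is what lets you pass from $D_iV^N$ to $D_{m^i}\mathcal V^N$, and this holds because the initial marginals are independent ($Z^i$ is $\mathbb F^i$-measurable), the Brownian motions are independent, and $\check\alpha$ is distributed --- is also the structural fact the paper relies on implicitly. Your proof additionally spells out the trivial lower bound $0\le\mathcal V^N_{\mathrm{dist}}-\mathcal V^N$, which the paper leaves implicit. The one caveat common to both proofs is the regularity needed to justify It\^o / the HJB equation: Lemma~\ref{lem:D2V} gives spatial $C^2$ regularity but not time regularity, and the paper itself says the PDEs hold only ``formally,'' so you are not worse off than the paper here, but you should not claim the It\^o step is fully justified by the two lemmas you cite alone.
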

\begin{proof} 
 First, we need to check that the lift $\mathcal{V}^N$ of $V^N$ is regular enough to apply the verification result (Proposition $3.4$) of \cite{jackson2023approximatelyoptimaldistributedstochastic}.
Using Lemma \ref{lem:D2V}, we have \begin{align*}
\sup_{t \in [0, T]} \sup_{m \in \mathcal{P}_2(\mathbb{R}^d)^N} \int_{\R^d}\left| D_y D_{m^i} \mathcal{V}^N(t, \boldsymbol \mu, x^i) \right|^2\mu^i(dx^i) < \infty, \quad i = 1, \ldots, N\,.
\end{align*}

 As mentioned before, the McKean-Vlasov SDE \eqref{def:checkX} is well-posed by Lemma 4.4 of \cite{jackson2023approximatelyoptimaldistributedstochastic}.\\
Thus, we can indeed apply Lemma 4.14 and the verification result Proposition 3.4 of \cite{jackson2023approximatelyoptimaldistributedstochastic}, with $\mathcal F(t,\boldsymbol \mu) := - E^N(t, \boldsymbol \mu)$ and $\mathcal G(\boldsymbol \mu) := \langle \boldsymbol \mu, g^N \rangle$, to get
\begin{align}
\label{eq:value_fx_E}
\mathcal{V}^N(t, \boldsymbol \mu) = \inf_{\alpha \in \mathcal{A}^{\text{dist}}} \mathbb{E} \bigg[ \int_t^T &\biggl( \frac{1}{2N} \sum_{i=1}^N\vert \alpha^i(s, X_s^i)\vert^2  +f^N({\bf \alpha}(s, X_s)) \nonumber \\
&-E^N(s, \big (\mathcal{L}(X_s^1), \ldots, \mathcal{L}(X_s^n)) \big )\biggr) ds 
+ g^N(X_T) \bigg],
\end{align}
where $X$ is given by $dX_s^i = \alpha^i(s, X_s^i) ds + dW_s^i$, with $X_t \sim \boldsymbol \mu$.\\

Moreover, we recall that ${\bf \check{X}^{t,m}}$ solves the McKean-Vlasov SDE \eqref{def:checkX} and 
$${\bf m}_s = (\mathcal{L}(\check{X}_s^{t,\boldsymbol \mu,1}), \ldots, \mathcal{L}(\check{X}_s^{t,\boldsymbol \mu,N}))\,.$$ Then, we define by $\alpha^*$ the associated control process  so that, for all $i \in \llbracket 1,N \rrbracket$ and $s \in (t,T]\,,$ $$\alpha^{*,i}_s:=\check a^{i}((D_{m^i}\mathcal{V}^N(t,{\bf m_s^{-i}},\cdot))_i,{\bf m_s})\big(s,\check{X}^{t,\boldsymbol \mu,i}_{s}\big ),$$ where ${\bf \check a}$ is given in Definition \ref{def:check_alpha}.

We deduce from the optimality criterion in Proposition 3.4 of \cite{jackson2023approximatelyoptimaldistributedstochastic} that the control $$(\alpha^{*,1},...,\alpha^{*,N}) \in \mathcal{A}^{\text{dist}}\,,$$ attains the infimum given in \eqref{eq:value_fx_E}. In particular,
\begin{align*}
\mathcal{V}^N(t, \boldsymbol \mu) &= \mathbb{E} \bigg[ \int_t^T \left( \frac{1}{2N} \sum_{i=1}^N \vert \alpha^{*,i}_s\vert^2 + f^N({\bf \alpha^{*}_s}) - E^N(s, {\bf m}_s) \right) ds + G(\check{X}_T^{t,\boldsymbol \mu}) \bigg] \\
&\geq V_{\text{dist}}^N(t, \boldsymbol \mu) - \int_t^T E^N(s, {\bf m}_s)  ds\,.
\end{align*}
\end{proof}

Therefore, to compare the two value functions $\mathcal V$ and $\mathcal V_{dist}$ and give a quantitative convergence rate, we first need to estimate $E^N$.  

We split the error into two terms: 
$$
\vert E^N \vert \leq \vert E^N_1 \vert + \vert E^N_2 \vert \,,
$$
where, for any ${\bf m} \in \mathcal{P}_2(\R^d)^N$ and $t \in [0,T]\,,$
\begin{align}
    \label{def:E1}
    E^N_1(t,{\bf m}) =   \mathcal H^N((D_{m^i} \mathcal V^N(t,\cdot))_{i=1,\dots, N}, {\bf m})-\int_{(\R^d)^N} H^N((D_{m^i} \mathcal V^N(t,x))_{i=1, \dots, N}){\bf m}(d\bx)\,,
\end{align}

and 
\begin{align}
\label{def:E2}
E^N_2(t,{\bf m}) =    \int_{(\R^d)^N} H^N((D_{m^i} \mathcal V^N(t,x))_{i=1, \dots, N}){\bf m}(d\bx) -\int_{(\R^d)^N} H^N(D V^N(t,x)){\bf m}(d\bx)   \,.
\end{align}

\subsection{Bound for $E_1^N$}
\begin{prop} 
\label{prop:bd_E1}
Let ${\bf m} \in \mathcal{P}_2(\R^d)^N$ and $t \in [0,T]$.
Then, we have  
$$
\vert E^N_1(t,{\bf m})\vert\leq K_1\,,
$$
with \begin{align*}
    K_1:= \frac{K_1'}{\sqrt{N}}(2\max_{ij}\Vert Dh_{ij}&\Vert_{\infty,C_G+\Vert Df_0\Vert_{\infty}}+\Vert Df_0\Vert_{\infty})\\
    &\qquad \times (C_G+\Vert Df_0\Vert_{\infty}) \big (\sqrt{\frac{4}{N^2}\sum_{ij}\Vert D^2h_{ij}\Vert_{\infty}^2}+\Vert D^2f_0\Vert_{\infty}\big )^2\,,
    \end{align*}
    and $K_1'\geq 0$ independent of N. \end{prop}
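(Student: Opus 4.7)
The approach is to compare the two suprema defining $\mathcal H^N$ and the pointwise $H^N$ through their respective optimizers ${\bf \check a}({\bf p}, {\bf m})$ from Definition~\ref{def:check_alpha} and ${\bf \hat a}({\bf p})$ from Definition~\ref{def:hat_a_pt}. Since distributed controls form a subset of the class in which $H^N$ is optimized pointwise, one has $\mathcal H^N({\bf p}, {\bf m}) \leq \int H^N({\bf p}({\bf x})){\bf m}(d{\bf x})$, so $E_1^N \leq 0$ and only the opposite bound needs work. Writing $\ell({\bf a}, {\bf p}) := \sum_i (p^i \cdot a^i + \frac{1}{2N}|a^i|^2) + f^N({\bf a})$, the two quantities read respectively $-\int \ell({\bf \hat a}({\bf x}), {\bf p}({\bf x})){\bf m}(d{\bf x})$ and $-\int \ell({\bf \check a}({\bf x}), {\bf p}({\bf x})){\bf m}(d{\bf x})$, so $|E_1^N| = \int [\ell({\bf \check a}) - \ell({\bf \hat a})]{\bf m}(d{\bf x}) \geq 0$. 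By convexity of $\ell$ in ${\bf a}$, this is bounded above by $\int \nabla_{\bf a} \ell({\bf \check a}({\bf x})) \cdot ({\bf \check a} - {\bf \hat a}){\bf m}(d{\bf x})$.

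The next step exploits the first-order optimality of ${\bf \check a}$ in the distributed problem. Perturbing $\check a^i(x^i)$ by an arbitrary $\eta^i(x^i)$ yields the Euler condition $\E_{{\bf m}^{-i}}[\partial_i \ell({\bf \check a}(x^i, Y^{-i}))] = 0$ for $m^i$-a.e.~$x^i$, so that $\int \partial_i \ell({\bf \check a})\cdot g(x^i){\bf m}(d{\bf x}) = 0$ for every $g$ depending only on $x^i$. Choosing $g = \check a^i$ and $g = N p^i$ and substituting produces the cancellation
\[
|E_1^N| \leq -\sum_i \int \partial_i \ell({\bf \check a}({\bf x}))\cdot(\hat a^i({\bf x}) + N p^i(x^i)){\bf m}(d{\bf x}).
\]
The fixed-point relation \eqref{defhatapt} for ${\bf \hat a}$ gives $\hat a^i + N p^i = -N\partial_i f^N({\bf \hat a})$, and using the explicit expression of $\partial_i f^N$ together with the $L^\infty$-bound on ${\bf \hat a}$ (obtained by the same projection argument as in Lemma~\ref{lem:check_a_bdd}, applied pointwise), $|\hat a^i + N p^i|$ is uniformly bounded by $A := \|Df_0\|_\infty + 2\max_{ij}\|Dh_{ij}\|_{\infty, C_G+\|Df_0\|_\infty}$. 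Cauchy--Schwarz then gives $|E_1^N| \leq A\sqrt N \big(\sum_i \int |\partial_i \ell({\bf \check a})|^2 d{\bf m}\big)^{1/2}$.

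The crux is then to show that the remaining factor is $O(1/N)$. Using the Euler equation once more, $\partial_i \ell({\bf \check a}({\bf x})) = \partial_i f^N({\bf \check a}({\bf x})) - \E_{{\bf m}^{-i}}[\partial_i f^N({\bf \check a}(x^i, Y^{-i}))]$ is a centered quantity in ${\bf x}^{-i}$, and the explicit decomposition $\partial_i f^N({\bf a}) = \frac{1}{N}Df_0(\frac{1}{N}\sum_k a^k) + \frac{2}{N^2}\sum_j Dh_{ij}(a^i - a^j)$ splits it into two pieces. Crucially, ${\bf m} = m_1\otimes\cdots\otimes m_N$ is a product measure, so the $X^k$'s are independent, and together with the uniform bound $|\check a^k|\leq M := C_G+\|Df_0\|_\infty$ from Lemma~\ref{lem:check_a_bdd} and the Lipschitz continuity of $Df_0$ and $Dh_{ij}$, each centered piece reduces to the fluctuation of a sum of independent random variables. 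A direct variance computation yields, for each $i$, an $O(\|D^2 f_0\|_\infty^2 M^2/N^3)$ bound for the $Df_0$ contribution and a $\frac{4 M^2}{N^4}\sum_j \|D^2 h_{ij}\|_\infty^2$ bound for the $Dh_{ij}$ contribution; summing over $i$ and applying the Cauchy--Schwarz estimate delivers the desired $O(1/\sqrt N)$ bound with the prescribed dependence on $A$, $M$, $\|D^2 f_0\|_\infty$ and $B_h := (\tfrac{4}{N^2}\sum_{ij}\|D^2 h_{ij}\|_\infty^2)^{1/2}$.

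The main obstacle is to obtain the $O(1/\sqrt N)$ rate \emph{without} invoking a Poincaré-type inequality on ${\bf m}$: this is precisely what is achieved by combining (i) the centering coming from the Euler equation for ${\bf \check a}$, which expresses $\partial_i \ell({\bf \check a})$ as a difference between a random quantity and its conditional expectation, and (ii) the independence of the components of ${\bf X}$ under the product measure ${\bf m}$, which produces the variance cancellation $\mathrm{Var}(S_N)=O(1/N)$ typical of sums of independent random variables. The uniform $L^\infty$ bound of Lemma~\ref{lem:check_a_bdd} is exactly what plays the role of the regularity assumption on ${\bf m}$ one would otherwise require.
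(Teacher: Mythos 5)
Your proof is correct and follows a genuinely different, and in fact slicker, route than the paper's. The paper decomposes $E_1^N = E_{1,1}^N + E_{1,2}^N$ by first substituting the fixed-point relations \eqref{defhatapt} and \eqref{defchecka} into the explicit formulas for the two Hamiltonians, and then bounds each piece via the intermediate quantity $A^N = \int \sum_i |\hat a^i - \check a^i|^2\,{\bf m}$ controlled in Lemma~\ref{lem:AN}, which itself invokes Lemma~\ref{lem:bd_diff_f}. You instead observe that the sign of $E_1^N$ is determined by the inclusion of distributed controls among full-information ones, apply the convexity of the integrand $\ell$ in ${\bf a}$ once, and then exploit the Euler conditions of both optimizers to collapse everything onto a single inner product $-\sum_i\int \partial_i\ell({\bf\check a})\cdot(\hat a^i + Np^i)\,{\bf m}$: here $\hat a^i + N p^i = -N\partial_i f^N({\bf\hat a})$ is uniformly bounded (via the projection argument of Lemma~\ref{lem:check_a_bdd} adapted to the pointwise problem), while $\partial_i\ell({\bf\check a})$ is, thanks to the Euler condition for $\check a$, exactly the centered fluctuation whose second moment Lemma~\ref{lem:bd_diff_f} controls. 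This bypasses the two-term decomposition and Lemma~\ref{lem:AN} entirely, and in fact gives a slightly tighter constant: you land on $\tfrac{K'}{\sqrt N}\,A\,M\,\sqrt{B_h^2 + \|D^2f_0\|_\infty^2}$ (with $A$, $M$, $B_h$ as in your notation), i.e.\ a linear rather than quadratic dependence on $\sqrt{B_h^2}+\|D^2f_0\|_\infty$, whereas the paper's stated $K_1$ has the square simply because it absorbs a product of two sub-estimates arising from its $E_{1,1}/E_{1,2}$ split. What both proofs share is the key variance estimate (independence under the product measure ${\bf m}$ plus the uniform $L^\infty$ bound on $\check a$); the difference is that you apply the convexity/optimality mechanism at the level of the cost functional, rather than reconstructing the optimizers' $L^2$ proximity first.

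One point to make rigorous in a write-up: the uniform bound $|\hat a^i({\bf p}({\bf x}))|\leq C_G + \|Df_0\|_\infty$ used to control $N|\partial_i f^N({\bf\hat a})|$ is not explicitly stated in the paper (only the distributed version, Lemma~\ref{lem:check_a_bdd}, is); you correctly note it follows by the same truncation argument applied pointwise (equivalently, taking ${\bf m}$ a Dirac mass at ${\bf x}$), but this should be spelled out since $|\hat a^i({\bf x}) - \hat a^j({\bf x})|$ can range up to $2(C_G + \|Df_0\|_\infty)$ and the radius in $\|Dh_{ij}\|_{\infty,\cdot}$ should be adjusted accordingly — the paper has the same slight imprecision, and it is harmless since $K_1'$ absorbs the factor, but it is worth flagging.
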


We start by proving the following lemma.
\begin{lem}
    \label{lem:bd_diff_f}
    Let $(a^i=a^i(x^i))_{i=1,\dots, N} \in (L^\infty(\R^d))^N$ such that $\|a^i\|_{L^{\infty}} \leq C_G+\Vert Df_0\Vert_{\infty}$ for all i and ${\bf m} \in \mathcal{P}_2(\R^d)^N$. Then, for any $i \in \llbracket 1,N \rrbracket\,,$ 
    \begin{align}
    \int_{(\R^d)^{N}}\vert \partial_if^N({\bf   a(y)})-\int_{(\R^d)^{N-1}} &\partial_if^N ({\bf   a(y^i,{\bf x^{-i}})}){\bf m}^{-i}(d\bx^{-i})\vert^2{\bf m}(d\by) \nonumber\\
    &
     \leq \frac{8(C_G+\Vert Df_0\Vert_{\infty})^2}{N^3}(\frac{4}{N}\sum_j\Vert D^2h_{ij}\Vert_{\infty}^2+\Vert D^2f_0\Vert_{\infty}^2)\,.
\end{align}
\end{lem}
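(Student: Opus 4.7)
The plan is to interpret the left-hand side as the $m^i$-average of a conditional variance, then exploit the product structure of $\mathbf{m}=m^1\otimes\cdots\otimes m^N$ together with the explicit Lipschitz estimates on $\partial_i f^N$ to get the claimed $1/N^3$ rate.

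\emph{Step 1 (Reduction to a variance).} Set $\Phi(\mathbf{y}):=\partial_i f^N(\mathbf{a}(\mathbf{y}))$ with $\mathbf{a}(\mathbf{y})=(a^1(y^1),\ldots,a^N(y^N))$. Since $\mathbf{m}=m^1\otimes\cdots\otimes m^N$, Fubini rewrites the LHS as $\int_{\mathbb{R}^d}\mathrm{Var}_{\mathbf{m}^{-i}}\bigl(\Phi(y^i,\cdot)\bigr)\,m^i(dy^i)$. Using the elementary bound $\mathrm{Var}(X)\leq \mathbb{E}|X-X'|^2$ with $X'$ an independent copy, for $\mathbf{Y}^{-i},\mathbf{X}^{-i}$ i.i.d.\ of law $\mathbf{m}^{-i}$ I get $\mathrm{Var}_{\mathbf{m}^{-i}}(\Phi(y^i,\cdot))\leq \mathbb{E}|\Phi(y^i,\mathbf{Y}^{-i})-\Phi(y^i,\mathbf{X}^{-i})|^2$.

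\emph{Step 2 (Decomposition of $\partial_i f^N$).} From item 1 of Assumption \ref{hyp:fG} and the evenness of $h_{ij}(\cdot)=\tilde h_{ij}(|\cdot|)$, a direct computation gives $\partial_i f^N(\mathbf{a})=\frac{1}{N}Df_0\bigl(\frac{1}{N}\sum_{k}a^k\bigr)+\frac{1}{N^2}\sum_{j}(Dh_{ij}+Dh_{ji})(a^i-a^j)$. I split $\Phi=A+B$ accordingly and use $(A+B)^2\leq 2A^2+2B^2$, reducing the estimate to the two pieces separately.

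\emph{Step 3 (Pointwise Lipschitz estimates and independence).} For the $f_0$ piece, the Lipschitz bound on $Df_0$ gives $|A(y^i,\mathbf{Y}^{-i})-A(y^i,\mathbf{X}^{-i})|\leq \frac{\|D^2f_0\|_\infty}{N^2}\bigl|\sum_{k\neq i}\bigl(a^k(Y^k)-a^k(X^k)\bigr)\bigr|$. The crucial step is that, under $\mathbf{m}^{-i}\otimes\mathbf{m}^{-i}$, the variables $a^k(Y^k)-a^k(X^k)$ are centered and independent across $k$; hence cross expectations vanish and $\mathbb{E}\bigl|\sum_{k\neq i}(a^k(Y^k)-a^k(X^k))\bigr|^2=\sum_{k\neq i}\mathbb{E}|a^k(Y^k)-a^k(X^k)|^2\leq 4(N-1)(C_G+\|Df_0\|_\infty)^2$, thanks to the uniform bound $\|a^k\|_{L^\infty}\leq C_G+\|Df_0\|_\infty$. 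Together with the $1/N^4$ prefactor this gives a contribution of order $(C_G+\|Df_0\|_\infty)^2\|D^2f_0\|_\infty^2/N^3$. For the $B$ piece the same strategy applies: the Lipschitz continuity of $Dh_{ij}$ (with constant $\|D^2h_{ij}\|_\infty$, plus the analogous contribution of $Dh_{ji}$) combined with independence across $j\neq i$ yields $\mathbb{E}|B(y^i,\mathbf{Y}^{-i})-B(y^i,\mathbf{X}^{-i})|^2\lesssim (C_G+\|Df_0\|_\infty)^2\sum_j\|D^2h_{ij}\|_\infty^2/N^4$. Summing the two contributions and integrating in $y^i$ against $m^i$ produces the announced bound.

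\emph{Main obstacle.} The one subtle point is the $f_0$ piece: a naive Cauchy--Schwarz $|\sum_{k\neq i}\xi_k|^2\leq (N-1)\sum|\xi_k|^2$ applied to the centered sum would cost a full factor of $N$ and only give a $1/N^2$ rate. Exploiting the product structure of $\mathbf{m}$ so that the cross expectations drop is essential to recover the advertised $1/N^3$. Beyond that, the proof is a matter of careful constant bookkeeping across the two contributions.
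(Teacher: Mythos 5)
Your proposal is correct and follows essentially the same route as the paper: decompose $\partial_i f^N$ into the $f_0$ piece and the $h_{ij}$ piece, apply $(A+B)^2\leq 2A^2+2B^2$, and in each piece exploit independence under the product measure $\mathbf{m}^{-i}$ to kill the cross terms before invoking the Lipschitz bounds $\|D^2f_0\|_\infty$ and $\|D^2h_{ij}\|_\infty$ together with the $L^\infty$ bound on the $a^k$. The only cosmetic difference is that you pass through the replication inequality $\mathrm{Var}(X)\leq\mathbb{E}|X-X'|^2$ where the paper works directly with the centered quantities $a^j(y^j)-\int a^j\,dm^j$; this costs at most a factor of $2$ in the constants and changes nothing structurally. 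You have also correctly identified the one genuinely delicate point, namely that a blind Cauchy--Schwarz on the sum over $k\neq i$ would lose a factor of $N$ and give only a $1/N^2$ rate for the $f_0$ contribution.
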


\begin{proof}
    We first compute the derivatives of $f^N$. For each $i$ and ${\bf a} \in (\R^d)^N$, we have

 \begin{align*}
    \partial_if^N({\bf a})=\frac{2}{N^2}\sum_jDh_{ij}( a^i-a^j )+\frac{1}{N}Df_0(\frac{1}{N}\sum_ia^i)\,.
\end{align*}
Therefore, for each i, 
\begin{align*}
    &\vert \partial_if^N({\bf  a(y)})-\int_{(\R^d)^{N-1}} \partial_if^N ({\bf   a}(y^i,{\bf x^{-i}}){\bf m}^{-i}(d\bx^{-i})\vert^2\\
    &\leq 2\biggl|\frac{1}{N}Df_0\big (\frac{1}{N}\sum_j a^j(y^j)\big )-\frac{1}{N}\int_{(\R^d)^{N-1}} Df_0\big (  \frac{1}{N}a^i(y^i)+\frac{1}{N}\sum_{j \neq i}a^j(x^j)\big ) {\bf m^{-i}}(d\bx^{-i})\biggr|^2\\
    &+2\biggl|\frac{2}{N^2}\sum_j \biggl[Dh_{ij}(  a^i(y^i)-  a^j(y^j))
    -\int_{(\R^d)^{N-1}} Dh_{ij}(   a^i(y^i)-   a^j(x^j)) m^{j}(dx^j)\biggr]\biggr|^2\\
\end{align*}
Therefore, 
\begin{align*}
\int_{(\R^d)}&\vert \partial_if^N({\bf  a(y)})-\int_{(\R^d)^{N-1}} \partial_if^N ({\bf   a}(y^i,{\bf x^{-i}}){\bf m}^{-i}(d\bx^{-i})\vert^2{\bf m}(d\by)\\
    &\leq 2\int_{(\R^d)}\biggl|\frac{1}{N}Df_0\big (\frac{1}{N}\sum_j a^j(y^j)\big )\\
    & \quad-\frac{1}{N}\int_{(\R^d)^{N-1}} Df_0\big ( \frac{1}{N} a^i(y^i)+\frac{1}{N}\sum_{j \neq i}a^j(x^j)\big ) {\bf m^{-i}}(\bx^{-i})\biggr|^2{\bf m}(d\by)\\
    &+2\int_{(\R^d)^{N}}\biggl|\frac{2}{N^2}\sum_j \biggl[Dh_{ij}(  a^i(y^i)-  a^j(y^j))\\
    & \qquad-\int_{(\R^d)^{N-1}} Dh_{ij}(  a^i(y^i)-   a^j(x^j)) m^{j}(dx^j)\biggr]\biggr|^2{\bf m}(d\by)\,.
\end{align*}
First,
\begin{align*}
    \biggl|\frac{2}{N^2}\sum_j \biggl[Dh_{ij}&(  a^i(y^i)-  a^j(y^j))
    -\int_{(\R^d)^{N-1}} Dh_{ij}(  a^i(y^i)-   a^j(x^j)) m^{j}(dx^j)\biggr]\biggr|^2\\
    &\leq\frac{4}{N^4}\sum_{j,k}\biggl|Dh_{ij}(  a^i(y^i)-  a^j(y^j))-\int_{\R^d} Dh_{ij}(  a^i(y^i)-   a^j(x^j))m^j(dx^j)\biggr|\\
    & \qquad \times \biggl|Dh_{ik}(  a^i(y^i)-  a^k(y^k))
   -\int_{\R^d}Dh_{ik}(  a^i(y^i)-   a^k(x^k))m^k(dx^k)\biggr|\,.
\end{align*}
So, by independence of the $(   a^j)_{j \neq i}$ with respect to the product measure ${\bf m^{-i}}$, we get, for $x^i \in \R^d,$  
\begin{align}
    \label{eq:cond_indep}
    \int_{(\R^d)^{N-1}} &\biggl|\frac{2}{N^2}\sum_j \biggl[Dh_{ij}(  a^i(x^i)-  a^j(y^j))
    -\int_{(\R^d)^{N-1}} Dh_{ij}(   a^i(x^i)-   a^j(x^j))m^{j}(dx^j)\biggr]\biggr|^2{\bf m}^{-i}(d\by^{-i})\nonumber \\
    &\leq\frac{4}{N^4}\sum_{j}\int_{\R^d}\biggl|Dh_{ij}(  a^i(x^i)-  a^j(y^j))-\int_{\R^d} Dh_{ij}(  a^i(x^i)-   a^j(x^j) )m^j(dx^j)\biggr|^2m^j(dy^j)\,.
\end{align}
Moreover, by item 2. of Assumption \ref{hyp:fG}, we have for all $(x,y) \in (\R^d)^2$,
\begin{align}
    \label{eq:bddh}
    \big | Dh_{ij}(  a^i(x^i)-  a^j(y^j))-Dh_{ij}(  a^i(x^i)-  a^j(x^j))\bigl | &\leq \Vert D^2h_{ij}\Vert_{\infty} \big |   a^j(y^j)-  a^j(x^j) \big | \,.
\end{align}
Therefore, using consecutively  \eqref{eq:cond_indep} and \eqref{eq:bddh}, we have 
\begin{align}
    \label{ineq:diff_f}
    \frac{4}{N^4}\sum_{j}\int_{\R^d}&\int_{\R^d}\biggl[Dh_{ij}(  a^i(x^i)-  a^j(y^j))-\int_{\R^d} Dh_{ij}(   a^i(x^i)-   a^j(x^j) )m^j(dx^j)\biggr]^2m^j(dy^j)m^i(dx^i) \nonumber\\
    &\leq \frac{4}{N^4}\sum_j\int_{\R^d} \biggl[\int_{\R^d} \Vert D^2h_{ij}\Vert_{\infty} \big |   a^j(y^j)-  a^j(x^j) \big |m^j(dy^j)\biggr]^2m^j(dx^j) \nonumber\\
    & \leq \frac{16(C_G+\Vert Df_0\Vert_{\infty})^2}{N^4}\sum_j\Vert D^2h_{ij}\Vert_{\infty}^2\,.
\end{align}
 The last inequality comes from the boundedness of the controls (by $C_G+\Vert Df_0\Vert_{\infty}$). \\
 
 Secondly,
 \begin{align*}
     \int_{(\R^d)^N}&\biggl|\frac{1}{N}Df_0\big (\frac{1}{N}\sum_j a^j(y^j)\big )-\frac{1}{N}\int_{(\R^d)^{N-1}} Df_0\big ( \frac{1}{N} a^i(y^i)+\frac{1}{N}\sum_{j \neq i}a^j(x^j)\big ) {\bf m^{-i}}(\bx^{-i})\biggr|^2{\bf m}(d\by)\\
     &\leq \frac{\Vert D^2f_0\Vert_{\infty}^2}{N^4}\int_{(\R^d)^N}\biggl|\sum_{j\neq i}(a^j(y^j)-\int_{\R^d}a^j(x^j)m^j(dx^j))\biggr|^2{\bf m}(d\by)\\
     &\leq \frac{\Vert D^2f_0\Vert_{\infty}^2}{N^4}\\
     &\quad \times\int_{(\R^d)^N}\sum_{j,k\neq i}\biggl|a^j(y^j)-\int_{\R^d}a^j(x^j)m^j(dx^j)\biggr|\biggl|a^k(y^k)-\int_{\R^d}a^k(x^k)m^k(dx^k)\biggr|{\bf m}(d\by)\\
     &\leq \frac{\Vert D^2f_0\Vert_{\infty}^2}{N^4}\int_{(\R^d)^N}\sum_{j \neq i}\int_{\R^d}\big |a^j(y^j)-\int_{\R^d}a^j(x^j)m^j(dx^j)\big |^2m^j(dy^j)\\
     &\leq \frac{4(C_G+\Vert Df_0\Vert_{\infty})^2\Vert D^2f_0\Vert_{\infty}^2}{N^3}\,,
 \end{align*}
 where the third inequality comes from the independence of the $(a^j)_j$ with respect to the product measure ${\bf m}\,.$
 Therefore, we get 
 \begin{align}
    \label{ineq:diff_f}
    \int_{\R^d}&\int_{(\R^d)^{N-1}}\vert \partial_if({\bf   a}(x^i, {\bf y^{-i}}))-\int_{(\R^d)^{N-1}} \partial_if^N ({\bf   a}(x^i, {\bf x^{-i}})){\bf m}^{-i}(d\bx^{-i})\vert^2{\bf m}^{-i}(d\by^{-i})m^i(dx^i) \nonumber\\
    &\leq \frac{32(C_G+\Vert Df_0\Vert_{\infty})^2}{N^4}\sum_j\Vert D^2h_{ij}\Vert_{\infty}^2+ \frac{8(C_G+\Vert Df_0\Vert_{\infty})^2\Vert D^2f_0\Vert_{\infty}^2}{N^3}\nonumber\\
    &=\frac{8(C_G+\Vert Df_0\Vert_{\infty})^2}{N^3}(\frac{4}{N}\sum_j\Vert D^2h_{ij}\Vert_{\infty}^2+\Vert D^2f_0\Vert_{\infty}^2)\,.
\end{align}
\end{proof}

Before stating the next lemma, we can notice that, for all ${\bf m} \in \mathcal{P}_2(\R^d)^N, {\bf p}\in (L^\infty(\R^d))^N $ and $i \in \llbracket 1,N\rrbracket$, $(\hat a \circ p)^i$ defined in Remark \ref{rmk:hat_a_fx}, satisfies the fixed point relation:
\be\label{defhata}
(\hat a \circ p)^i(\bx) = -N p^i(x^i) - N \partial_if^N \big (((\hat a \circ p)^j(\bx))_{j=1, \dots N}\big )\quad  \forall {\bf x} \in (\R^d)^N\,.
\ee

\begin{lem}
    \label{lem:AN}
    Let ${\bf m} \in \mathcal{P}_2(\R^d)^N$ and ${\bf p}\in (L^\infty(\R^d))^N$ such that
   \begin{align*}
      \|p^i\|_{\infty}\leq \frac{C_G}{N} \; \text{ for all $i$.} 
   \end{align*} \\
    Let $\hat {\bf a} \circ {\bf p}$ be given by Definition \ref{def:hat_a_pt} and ${\bf \check a({\bf p},{\bf m})}$ by Definition \ref{def:check_alpha}.
    Then,  
    \begin{align*}
    A^N&:= \int_{(\R^d)^N} \sum_i | (\hat a\circ {\bf p})^i(\bx)- \check a({\bf p},{\bf m})^i(x^i)|^2 {\bf m}(d\bx) \\
    &\leq 8(C_G+\Vert Df_0\Vert_{\infty})^2(\frac{4}{N^2}\sum_{ij}\Vert D^2h_{ij}\Vert_{\infty}^2+\Vert D^2f_0\Vert_{\infty}^2)\,.
     \end{align*}
\end{lem}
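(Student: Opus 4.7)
The plan is to subtract the two fixed-point characterisations \eqref{defhata} and \eqref{defchecka} pointwise, exploit the monotonicity of $\nabla f^N$, and then use Lemma \ref{lem:bd_diff_f} to control the resulting error. Fix $\bx \in (\R^d)^N$ and write, for brevity, $\hat a^i := (\hat a\circ p)^i(\bx)$ and $\check a^i := \check a({\bf p},{\bf m})^i(x^i)$. Subtracting \eqref{defchecka} from \eqref{defhata} and rewriting, one gets
\begin{equation*}
\hat a^i - \check a^i
= -N\bigl(\partial_i f^N(\hat{\bf a}) - \partial_i f^N(\check{\bf a}(\bx))\bigr) - N\, R_i(\bx),
\end{equation*}
where $\check{\bf a}(\bx):=(\check a^j(x^j))_j$ and
\begin{equation*}
R_i(\bx):= \partial_i f^N(\check{\bf a}(\bx)) - \int_{(\R^d)^{N-1}} \partial_i f^N\bigl(\check a^i(x^i),(\check a^j(y^j))_{j\neq i}\bigr)\,{\bf m}^{-i}(d\by^{-i}).
\end{equation*}

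Taking the scalar product with $(\hat a^i - \check a^i)$ and summing over $i$, the convexity of $f^N$ (Assumption \ref{hyp:fG}, items 2.\ and 3.) ensures that the monotonicity term is nonnegative, so
\begin{equation*}
\sum_i |\hat a^i(\bx) - \check a^i(x^i)|^2
\;\leq\; -N \sum_i (\hat a^i - \check a^i)\cdot R_i(\bx)
\;\leq\; N\sum_i |\hat a^i - \check a^i|\,|R_i(\bx)|.
\end{equation*}
Integrating against ${\bf m}$ and applying Cauchy–Schwarz yields
\begin{equation*}
A^N \;\leq\; N \sqrt{A^N}\, \sqrt{\int_{(\R^d)^N} \sum_i |R_i(\bx)|^2\,{\bf m}(d\bx)},
\end{equation*}
hence $A^N \leq N^2 \int \sum_i |R_i|^2\,d{\bf m}$.

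It remains to control each $\int |R_i(\bx)|^2\,{\bf m}(d\bx)$. Under the hypothesis $\|p^i\|_\infty \leq C_G/N$, Lemma \ref{lem:check_a_bdd} gives the pointwise bound $\|\check a^i\|_{L^\infty}\leq C_G+\|Df_0\|_\infty$, so Lemma \ref{lem:bd_diff_f} applies with $a^i=\check a^i$ and produces
\begin{equation*}
\int_{(\R^d)^N}|R_i(\bx)|^2\,{\bf m}(d\bx) \leq \frac{8(C_G+\|Df_0\|_\infty)^2}{N^3}\Bigl(\tfrac{4}{N}\sum_j\|D^2h_{ij}\|_\infty^2 + \|D^2f_0\|_\infty^2\Bigr).
\end{equation*}
Summing over $i$ and multiplying by $N^2$ gives exactly the claimed estimate. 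The only genuinely delicate ingredient is ensuring that the fixed-point equation for $\check a^i$ is available under the integral (which holds ${\bf m}$-a.e.\ by the convention stated after \eqref{defchecka}) and that the convexity step legitimately discards the monotonicity contribution; the remainder is a direct combination of Cauchy–Schwarz with the two preliminary lemmas.
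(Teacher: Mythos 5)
Your proof is correct and follows essentially the same route as the paper's: subtract the fixed-point characterisations \eqref{defhata} and \eqref{defchecka}, pair with $\hat a^i - \check a^i$ and discard the nonnegative monotonicity term coming from convexity of $f^N$, apply Cauchy–Schwarz to reduce the problem to bounding $\int \sum_i |R_i|^2\,d{\bf m}$, and then invoke Lemma \ref{lem:check_a_bdd} to justify the $L^\infty$ bound needed to apply Lemma \ref{lem:bd_diff_f}. The only difference from the paper is purely presentational — you perform the convexity and Cauchy–Schwarz steps pointwise before integrating, while the paper integrates first — and this does not change the substance of the argument.
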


\begin{proof}
For simplicity, let us introduce ${\bf \hat a:=\hat a \circ {\bf p}}$ from Definition \ref{def:hat_a_pt} and ${\bf \check a:=\check a}({\bf p},{\bf m})$ from Definition \ref{def:check_alpha}.\\
Thus, for each $i \in \llbracket 1,N\rrbracket$, 
 $\hat a^i$ satisfies the fixed point relation \eqref{defhata} and $\check a^i$ satisfies the relation \eqref{defchecka}.\\
   
Using these fixed point relations, we have 
\begin{align*}
&A^N:= \int_{(\R^d)^N} \sum_i | \hat a^i(\bx)- \check a^i(x^i)|^2 {\bf m}(d\bx)\\
 & = - N \int_{(\R^d)^N} \sum_i  ( \partial_if^N ({\bf\hat a(x)}) - \int_{(\R^d)^{N-1}} \partial_if^N ({\bf \check a}(x^i,{\bf y^{-i}})){\bf m}^{-i}(d\by^{-i})) \cdot (\hat a^i(\bx)- \check a^i(x^i)) {\bf m}(d\bx)\\
& = - N \int_{(\R^d)^N} \sum_i   ( \partial_if^N ({\bf \hat a(x)}) - \partial_if^N ({\bf \check a(x)})  ) \cdot (\hat a^i(\bx)- \check a^i(x^i)) {\bf m}(d\bx)\\ 
&  - N \int_{(\R^d)^N} \sum_i  \biggl (\partial_if^N ({\bf \check a(x)}) - \int_{(\R^d)^{N-1}} \partial_if^N ({\bf \check a}(x^i,{\bf y^{-i}})){\bf m}^{-i}(d\by^{-i}) \cdot  (\hat a^i(\bx)- \check a^i(x^i))\biggr) {\bf m}(d\bx)
\end{align*} 
The first term in the right-hand side is non-positive since $f^N$ is convex. Thus, by Cauchy-Schwarz,  
\begin{align*}
A^N & \leq  N \left( \int_{(\R^d)^N} \sum_i  |\partial_if^N ({\bf \check a(x)}) - \int_{(\R^d)^{N-1}} \partial_if^N ({\bf \check a}(x^i,{\bf y^{-i}})){\bf m}^{-i}(d\by^{-i}))|^2{\bf m}(d\bx)\right)^{1/2} \\
& \qquad \qquad  \times  \left(  \int_{(\R^d)^N} \sum_i |\hat a^i(\bx)- \check a^i(x^i)|^2 {\bf m}(d\bx)\right)^{1/2}\,,
\end{align*} 
which implies that 
\begin{align}\label{ineq:a_f}
A^N & \leq  N^2  \left( \int_{(\R^d)^N} \sum_i |\partial_if^N ({\bf \check a(x)}) - \int_{(\R^d)^{N-1}} \partial_if^N ({\bf \check a}(x^i,{\bf y^{-i}})){\bf m}^{-i}(d\by^{-i}))|^2{\bf m}(d\bx)\right). 
\end{align}

Moreover, by assumption,  $\Vert p^i\Vert_{\infty} \leq \frac{C_G}{N}$ for each i, so we can apply Lemma \ref{lem:check_a_bdd} for such constant $C_G$ which gives us
\be
 \| \check a^i\|_{L^\infty} \leq C_G + \Vert Df_0\Vert_{\infty} \,.
\ee

Therefore, we can now apply Lemma \ref{lem:bd_diff_f}, knowing that the controls respect the boundedness condition. Using \eqref{ineq:a_f}, we get
\begin{align}
    \label{ineq:An}
    A^N &\leq N^2 \left( \int_{(\R^d)^N} \sum_i |\partial_i f^N ({\bf \check a(x)}) - \int_{(\R^d)^{N-1}} \partial_i f^N \big ({\bf \check a}(x^i, {\bf y^{-i}})\big ){\bf m}^{-i}(d\by^{-i}))|^2{\bf m}(d\bx)\right)\nonumber\\
    &\leq N^2\sum_i\frac{8(C_G+\Vert Df_0\Vert_{\infty})^2}{N^3}(\frac{4}{N}\sum_j\Vert D^2h_{ij}\Vert_{\infty}^2+\Vert D^2f_0\Vert_{\infty}^2)\nonumber\\
    &=8(C_G+\Vert Df_0\Vert_{\infty})^2(\frac{4}{N^2}\sum_{ij}\Vert D^2h_{ij}\Vert_{\infty}^2+\Vert D^2f_0\Vert_{\infty}^2). 
\end{align}

\end{proof}
\begin{proof}[Proof of Prop. \ref{prop:bd_E1}] 

For simplicity, we will use the following notation. \\
Fix $t \in [0,T]$ and define ${\bf p}\in (L^\infty(\R^d))^N$ such that, for each i, $p^i=D_{m^i}\mathcal V^N(t,\cdot) $.

Then, for each ${\bf m} \in \mathcal{P}_2(\R^d)^N$, recall that : 
\begin{align}
  \label{def:E_1}  
E_1^N(t,{\bf m})=\int_{(\R^d)^N} H^N({(p^i({\bf{x}}))_{i=1, \dots, N})}{\bf m}(d\bx)- \mathcal H^N({(p^i({\bf{x}}))_{i=1, \dots, N})}, {\bf m}).
\end{align}

Let ${\bf \hat a:=\hat a \circ p}$ defined in Definition \ref{def:hat_a_pt}, so being optimal in the problem
\begin{align*}
\int_{(\R^d)^N} H^N(&(p^i(x^i))_{i=1, \dots, N}){\bf m}(d\bx) \\
&= \sup_{(a^i=a^i(\bx))} \int_{(\R^d)^N} -((\sum_i a^i(\bx) p^i(x^i) +\frac{1}{2N} |a^i(\bx)|^2) + f^N({\bf a(\bx)})){\bf m}(d\bx)\,,
\end{align*}
see Remark \ref{rmk:hat_a_fx}.\\

Let ${\bf \check a:=\check a(p, {\bf m})}$, as in Definition \ref{def:check_alpha}, being optimal in the definition of $\mathcal H^N((p^i)_{i=1,\dots, N}, {\bf m})$.

By definition of the Hamiltonians and the fixed point relations \eqref{defhata} and \eqref{defchecka}, we have 
\begin{align}
\label{eq:H}
\int_{(\R^d)^N} &H^N((p^i(x^i))_{i=1, \dots, N}){\bf m}(d\bx)\nonumber \\
&=  \int_{(\R^d)^N} -((\sum_i \hat a^i(\bx) p^i(x^i) +\frac{1}{2N} |\hat a^i(\bx)|^2) + f^N({\bf \hat a}(\bx))){\bf m}(d\bx) \nonumber\\ 
& = \int_{(\R^d)^N} (\frac{N}{2} \sum_i  (|p^i(x^i)|^2 + |\partial_i f^N({\bf \hat a}(\bx))|^2) + f^N({\bf \hat a}(\bx))){\bf m}(d\bx) \,,
\end{align}
while 
\begin{align}
\label{eq:Hcal}
\mathcal H^N(&(p^i)_{i=1,\dots, N}, {\bf m}) =  \int_{(\R^d)^N} -((\sum_i \check a^i(x^i) p^i(x^i) +\frac{1}{2N} |\check a^i(x^i)|^2) + f^N({\bf \check a(x)})){\bf m}(d\bx) \,.\nonumber\\ 
& = \int_{(\R^d)^N} (\frac{N}{2} \sum_i  (|p^i(x^i)|^2 + |\int_{(\R^d)^{N-1}} \partial_i f^N({\bf \check a}(x^i,{\bf y}^{-i}){\bf m}^{-i}(d\by^{-i}) |^2) + f^N({\bf \check a(x)})){\bf m}(d\bx) \,.
\end{align}

Therefore, from Definition \ref{def:E_1}, we can rewrite
\begin{align*}
\vert E^N_1(t,{\bf m}) \vert 
& = \left|  \int_{(\R^d)^N} (f^N({\bf \hat a(x)})-f^N({\bf \check a(x)})){\bf m}(d\bx) \right|\\
& +  \left|  \int_{(\R^d)^N} \frac{N}{2} \sum_i  (|\partial_i f^N({\bf \hat a(x)})|^2-|\int_{(\R^d)^{N-1}} \partial_i f^N({\bf \check a})(x^i,{\bf y}^{-i}){\bf m}^{-i}(d\by^{-i}) |^2){\bf m}(d\bx) \right|  \\ 
&  := \vert E^N_{1,1}(t,{\bf m})\vert+ \vert E^N_{1,2}(t,{\bf m})\vert .
\end{align*}

 We have, by Lemma \ref{lem:Lip_V}, that $\Vert {\bf p}\Vert_{\infty} \leq \Vert {\bf p}\Vert_{2,1}\leq \frac{C_G}{N},$ where the constant $C_G $ is defined in item 4. of Assumption \ref{hyp:fG}. So, for each i, $\Vert p^i\Vert_{\infty}\leq \Vert {\bf p}\Vert_{\infty} \leq \frac{C_G}{N}\,.$ \\
Then, we can apply Lemma \ref{lem:AN} with ${\bf m}$ and ${\bf p}=D_m\mathcal{V}^N$, and get 
\begin{align*}
    A^N&:= \int_{(\R^d)^N} \sum_i | (\hat a\circ {\bf p})^i(\bx)- \check a({\bf p},{\bf m})^i(x^i)|^2 {\bf m}(d\bx) \\
    & \leq 8(C_G+\Vert Df_0\Vert_{\infty})^2(\frac{4}{N^2}\sum_{ij}\Vert D^2h_{ij}\Vert_{\infty}^2+\Vert D^2f_0\Vert_{\infty}^2)\,.
\end{align*}

We then obtain 
\begin{align}
\label{ineq:E11}
&\vert E^N_{1,1}(t,{\bf m})\vert \nonumber\\
&= \left|  \int_{(\R^d)^N} (f^N({\bf \hat a(x)})-f^N({\bf \check a(x)})){\bf m}(d\bx) \right| \nonumber \\& \leq     \int_{(\R^d)^N} \frac{1}{N^2}\sum_{i,j}|h_{ij}( \hat a^i(\bx)- \hat a^j(\bx))-h_{ij}(\check a^i(x^i) -\check a^j(x^j))|{\bf m}(d\bx) \nonumber\\
&+\int_{(\R^d)^N}\vert f_0(\frac{1}{N}\sum_i\hat{a}^i(\bx))-f_0(\frac{1}{N}\sum_i\check{a}^i(\bx))\vert {\bf m}(d\bx) \nonumber\\
&\leq \max_{ij}\Vert Dh_{ij}\Vert_{\infty,C_G+\Vert Df_0\Vert_\infty}\int_{(\R^d)^N}\frac{1}{N^2}\sum_{i,j}\left| \hat a^i(\bx)- \hat a^j(\bx) - (\check a^i(x^i) -\check a^j(x^j)) \right|{\bf m}(d\bx) \nonumber\\
&+\Vert Df_0\Vert_\infty\int_{(\R^d)^N}\frac{1}{N}\sum_{i}\left| \hat a^i(\bx) - \check a^i(x^i) \right|{\bf m}(d\bx)\nonumber\\
& \leq (2\max_{ij}\Vert Dh_{ij}\Vert_{\infty,C_G+\Vert Df_0\Vert_\infty}+\Vert Df_0\Vert_\infty)\int_{(\R^d)^N}\frac{1}{N}\sum_{i}\left| \hat a^i(\bx) - \check a^i(x^i) \right|{\bf m}(d\bx)\nonumber\\
&\leq (2\max_{ij}\Vert Dh_{ij}\Vert_{\infty,C_G+\Vert Df_0\Vert_\infty}+\Vert Df_0\Vert_\infty) \sqrt{\int_{(\R^d)^N}\frac{1}{N}\sum_{i}\left| \hat a^i(\bx) - \check a^i(x^i) \right|^2{\bf m}(d\bx)}\nonumber\\
&=\frac{(2\max_{ij}\Vert Dh_{ij}\Vert_{\infty,C_G+\Vert Df_0\Vert_\infty}+\Vert Df_0\Vert_\infty)\sqrt{A^N}}{\sqrt{N}}\nonumber\\
&\leq \frac{(4\max_{ij}\Vert Dh_{ij}\Vert_{\infty,C_G+\Vert Df_0\Vert_\infty}+2\Vert Df_0\Vert_\infty)}{\sqrt{N}} \nonumber\\
& \qquad \times (C_G+\Vert Df_0\Vert_{\infty})\sqrt{(\frac{8}{N^2}\sum_{ij}\Vert D^2h_{ij}\Vert_{\infty}^2+2\Vert D^2f_0\Vert_{\infty}^2)}\,. 
 \end{align} 
 On the other hand 

 \begin{align*}
\vert &E^N_{1,2}(t,{\bf m}) \vert \\
&=\left|  \int_{(\R^d)^N} \frac{N}{2} \sum_i  (|\partial_i f^N({\bf \hat a(x)})|^2-|\int_{(\R^d)^{N-1}} \partial_i f^N({\bf \check a}(x^i,{\bf y}^{-i})){\bf m}^{-i}(d\by^{-i}) |^2){\bf m}(d\bx) \right|\\
&\leq \int_{(\R^d)^N}\frac{N}{2}\frac{2(2\max_{ij}\Vert Dh_{ij}\Vert_{\infty,C_G+\Vert Df_0\Vert_{\infty}}+\Vert Df_0\Vert_{\infty})}{N}\\
&\qquad \qquad\times\sum_i\left| \vert \partial_i f^N({\bf \hat a(x)})\vert -\vert \int_{(\R^d)^{N-1}} \partial_i f^N({\bf \check a}(x^i,{\bf y}^{-i})){\bf m}^{-i}(d\by^{-i})\vert  \right|\\
&\leq  \int_{(\R^d)^N} (2\max_{ij}\Vert Dh_{ij}\Vert_{\infty,C_G+\Vert Df_0\Vert_{\infty}}+\Vert Df_0\Vert_{\infty}) \sum_i \biggl(\left| \vert \partial_i f^N({\bf \hat a(x)})\vert- \vert\partial_i f^N({\bf \check a(x)})\vert\right| \\
& \qquad \qquad +\left| \vert\partial_i f^N({\bf \check a(x)}) \vert  -\int_{(\R^d)^{N-1}} \vert \partial_i f^N\big ({\bf \check a}(x^i,{\bf y}^{-i})\big ){\bf m}^{-i}(d\by^{-i})\vert\right| \biggl){\bf m}(d\bx)\,.
\end{align*}

First, using item 2. and 3. of Assumption \ref{hyp:fG}, we have
\begin{align*}
    \int_{(\R^d)^N}& \sum_i\left| \vert\partial_i f^N({\bf \hat a(x)})\vert- \vert\partial_i f^N({\bf \check a(x)})\vert\right|{\bf m}(d\bx)\\
    \leq&  \int_{(\R^d)^N}\frac{2}{N^2}\sum_{ij}\Vert D^2h_{ij} \Vert_{\infty}\vert \hat a^i(\bx) - \check a^i(x^i)-(\hat a^j(\bx) - \check a^j(x^j)) \vert{\bf m}(d\bx)\\
    + & \int_{(\R^d)^N}\frac{1}{N^2}\Vert D^2f_0\Vert_{\infty}\sum_{i}\vert \hat a^i(\bx) - \check a^i(x^i) \vert{\bf m}(d\bx)\\
    \leq& \frac{1}{N}(\sqrt{\frac{8}{N^2}\sum_{ij}\Vert D^2h_{ij} \Vert_{\infty}^2}+\frac{1}{N}\Vert D^2f_0\Vert_\infty)\sqrt{\int_{(\R^d)^N}N\sum_{i} \vert \hat a^i(\bx) - \check a^i(x^i)\vert^2{\bf m}(d\bx)}\\
    \leq& \frac{1}{N}(\sqrt{\frac{8}{N^2}\sum_{ij}\Vert D^2h_{ij} \Vert_{\infty}^2}+\frac{1}{N}\Vert D^2f_0\Vert_\infty)\sqrt{NA^N}\\
    \leq &
    \frac{1}{\sqrt{N}}(\sqrt{\frac{8}{N^2}\sum_{ij}\Vert D^2h_{ij} \Vert_{\infty}^2}+\frac{1}{N}\Vert D^2f_0\Vert_\infty)(C_G+\Vert Df_0\Vert_{\infty})\\
    &\qquad \times \sqrt{8\big (\frac{4}{N^2}\sum_{ij}\Vert D^2h_{ij}\Vert_{\infty}^2+\Vert D^2f_0\Vert_{\infty}^2\big )}\,.
\end{align*}

Secondly, by Cauchy-Schwarz inequality and by using Lemma \ref{lem:bd_diff_f},
\begin{align*}
    \int_{(\R^d)^N}&\sum_i 
\left| \vert  \partial_i f^N({\bf \check a(x)}) \vert   - \vert \int_{(\R^d)^{N-1}} \partial_i f^N({\bf \check a}(x^i,{\bf y}^{-i})){\bf m}^{-i}(d\by^{-i}) \vert \right|{\bf m}(d\bx) \\
&\leq \sqrt{N}\sqrt{\int_{(\R^d)^N}\sum_i
\left| \vert \partial_i f^N({\bf \check a(x)})  \vert -\vert \int_{(\R^d)^{N-1}} \partial_i f^N({\bf \check a}(x^i,{\bf y}^{-i})){\bf m}^{-i}(d\by^{-i}) \vert \right|^2 {\bf m}(d\bx) }\\
&\leq \sqrt{N}\sqrt{\frac{8(C_G+\Vert Df_0\Vert_{\infty})^2}{N^2}(\frac{4}{N^2}\sum_{ij}\Vert D^2h_{ij}\Vert_{\infty}^2+\Vert D^2f_0\Vert_{\infty}^2)}\\
&\leq \frac{1}{\sqrt{N}}(C_G+\Vert Df_0\Vert_{\infty})\sqrt{8(\frac{4}{N^2}\sum_{ij}\Vert D^2h_{ij}\Vert_{\infty}^2+\Vert D^2f_0\Vert_{\infty}^2)}\,,
\end{align*}
where the second inequality comes from Lemma \ref{lem:bd_diff_f}.\\
Thus, 
\begin{align}
\label{ineq:E12}
    \vert& E^N_{1,2}(t,{\bf m})\vert \nonumber \\
    &\leq  \int_{(\R^d)^N} (2\max_{ij}\Vert Dh_{ij}\Vert_{\infty,C_G+\Vert Df_0\Vert_{\infty}}+\Vert Df_0\Vert_{\infty})  \sum_i \biggl(\left| \vert \partial_i f^N({\bf \hat a(x)})\vert - \vert \partial_i f^N({\bf \check a(x)})\vert \right| \nonumber\\
    & \qquad +\left| \vert \partial_i f^N({\bf \check a(x)}) \vert  -\vert \int_{(\R^d)^{N-1}} \partial_i f^N({\bf \check a}(x^i,{\bf y}^{-i})){\bf m}^{-i}(d\by^{-i})\vert  \right| \biggr){\bf m}(d\bx) \nonumber \\
    & \leq (2\max_{ij}\Vert Dh_{ij}\Vert_{\infty,C_G+\Vert Df_0\Vert_{\infty}}+\Vert Df_0\Vert_{\infty})\frac{1}{\sqrt{N}}(C_G+\Vert Df_0\Vert_{\infty})\nonumber\\
    & \qquad \times \sqrt{8 \big (\frac{4}{N^2}\sum_{ij}\Vert D^2h_{ij}\Vert_{\infty}^2+\Vert D^2f_0\Vert_{\infty}^2\big )} (1+\sqrt{\frac{8}{N^2}\sum_{ij}\Vert D^2h_{ij} \Vert_{\infty}^2}+\frac{1}{N}\Vert D^2f_0\Vert_\infty)\,.
\end{align}
By \eqref{ineq:E11} and \eqref{ineq:E12}, this shows that
$$
E^N_1(t,{\bf m})\leq K_1\,,
$$
with \begin{align*}
    K_1:= \frac{K_1'}{\sqrt{N}}(2\max_{ij}\Vert Dh_{ij}\Vert&_{\infty,C_G+\Vert Df_0\Vert_{\infty}}+\Vert Df_0\Vert_{\infty})\\
    &\qquad \times (C_G+\Vert Df_0\Vert_{\infty}) \big (\sqrt{\frac{4}{N^2}\sum_{ij}\Vert D^2h_{ij}\Vert_{\infty}^2}+\Vert D^2f_0\Vert_{\infty}\big )^2\,,
    \end{align*}
    and $K_1'\geq 0$ independent of N. 
\end{proof}
\subsection{Bound for $E_2^N$}
We now bound the error $$E_2^N(t,{\bf m}):=  \int_{(\R^d)^N} H^N(D V^N(t,{\bf x})){\bf m}(d\bx)-  \int_{(\R^d)^N} H^N((D_{m^i} \mathcal V^N(t,\cdot))_{i=1, \dots, N}){\bf m}(d\bx)\,.$$

\begin{prop}
    \label{prop:E1/EQ}
    For all ${\bf m} \in \mathcal{P}_2(\R^d)^N$ and $t \in [0,T]$, $E_2^N(t,{\bf m})$ satisfies $$-K_1 \leq E_2^N(t,{\bf m}) \leq E_Q^N(t,{\bf{m}})+2(C_G +\Vert Df_0\Vert_\infty)\sqrt{E_Q^N(t,{\bf{m}})}\,,$$
    where with \begin{align*}
    K_1:= \frac{K_1'}{\sqrt{N}}(2\max_{ij}&\Vert Dh_{ij}\Vert_{\infty,C_G+\Vert Df_0\Vert_{\infty}}+\Vert Df_0\Vert_{\infty})\\
    &\qquad \times (C_G+\Vert Df_0\Vert_{\infty}) \big (\sqrt{\frac{4}{N^2}\sum_{ij}\Vert D^2h_{ij}\Vert_{\infty}^2}+\Vert D^2f_0\Vert_{\infty}\big )^2\,,
    \end{align*}
    $K_1'\geq 0$ independent of N and
    $$
    E_Q^N(t,{\bf m}):=N\sum_i \left[\int_{(\R^d)^N}\vert D_iV^N(t,\bx)\vert ^2{\bf m}(d\bx)-\int_{(\R^d)^N}\vert D_{m^i}\mathcal{V}^N(t,{\bf m}^{-i},x^i)\vert ^2 m^i(dx^i)\right]\,.
    $$
\end{prop}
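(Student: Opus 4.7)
The plan is to prove the upper and lower inequalities separately. For the upper bound, I would start from the convexity of the Hamiltonian $H^N$ in its costate argument: using $\hat a(DV^N(t,\bx))$ as a test point in the supremum defining $H^N((D_{m^i}\mathcal V^N(t,{\bf m^{-i}},x^i))_i)$, one obtains the pointwise inequality
\[
H^N(DV^N(t,\bx)) - H^N\bigl((D_{m^i}\mathcal V^N(t,{\bf m^{-i}},x^i))_i\bigr) \le \sum_i \hat a^i(DV^N(t,\bx)) \cdot \bigl(D_{m^i}\mathcal V^N(t,{\bf m^{-i}},x^i) - D_iV^N(t,\bx)\bigr).
\]
I would then substitute the fixed-point relation \eqref{defhatapt} taken at ${\bf p}=DV^N(t,\bx)$, which decomposes the right-hand side into a purely quadratic term in $D_iV^N$ and a cross term involving $\partial_if^N(\hat a(DV^N))$.

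After integrating against ${\bf m}$, and using that $D_{m^i}\mathcal V^N(t,{\bf m^{-i}},x^i)$ is exactly the conditional expectation of $D_iV^N(t,\cdot,x^i)$ under the product structure of ${\bf m}$, the quadratic contribution telescopes to $E_Q^N(t,{\bf m})$ by the standard variance decomposition. The remaining cross term $N\int\sum_i\partial_if^N(\hat a(DV^N))\cdot(D_{m^i}\mathcal V^N-D_iV^N)\,d{\bf m}$ is handled by Cauchy--Schwarz, giving a bound of $\sqrt{E_Q^N}$ times $\bigl(N\sum_i\int|\partial_if^N(\hat a(DV^N))|^2\,d{\bf m}\bigr)^{1/2}$. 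To close this estimate I need a pointwise $L^\infty$ bound on $\hat a(DV^N)$: a truncation argument at each fixed $\bx$, completely analogous to the one used in Lemma \ref{lem:check_a_bdd} for $\check a$ and relying on $|D_iV^N|\le C_G/N$ from Lemma \ref{lem:Lip_V} together with the monotonicity of the $\tilde h_{ij}$, yields $|\hat a^i(DV^N)|\le C_G+\|Df_0\|_\infty$. Injecting this bound into the fixed-point relation gives $|N\partial_if^N(\hat a(DV^N))|\le 2(C_G+\|Df_0\|_\infty)$ pointwise, so that the bracketed factor is at most $2(C_G+\|Df_0\|_\infty)$, producing the claimed upper bound.

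For the lower bound I would observe that $E^N(t,{\bf m})\ge 0$: indeed $\mathcal H^N((p^i)_i,{\bf m})$ is a supremum over controls $a^i=a^i(x^i)$ depending only on the single coordinate $x^i$, whereas $\int H^N({\bf p}(\bx))\,d{\bf m}$ equals a supremum over the larger class of controls that may depend on the full state $\bx$. Combining $E^N\ge 0$ with the identity $E^N=E_1^N+E_2^N$ and the bound $|E_1^N|\le K_1$ from Proposition \ref{prop:bd_E1} then immediately yields $E_2^N\ge -K_1$. The main technical obstacle in the argument is establishing the pointwise $L^\infty$ bound on the full-information optimizer $\hat a(DV^N)$: since $\hat a(\cdot)$ is defined pointwise in its costate argument, Lemma \ref{lem:check_a_bdd} does not apply directly and its truncation argument must be reproduced in that pointwise framework. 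Once this a priori bound is in place, the remainder of the proof reduces to routine Cauchy--Schwarz estimates.
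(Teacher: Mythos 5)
The proposal is correct and, for the upper bound, takes a genuinely different (and arguably cleaner) route than the paper. The paper applies convexity of the auxiliary function $\phi({\bf y})=N|{\bf y}|^2 - H^N({\bf y})$ with the tangent taken at ${\bf q}=(D_{m^i}\mathcal V^N)_i$, so the cross term involves $\hat a^i({\bf q}(\bx))$; it then subtracts $\check a^i(q^i(x^i))$ (which integrates against $p^i-q^i$ to zero) and Cauchy--Schwarz against the bound on $\hat a^{\mathcal V}-\check a^{\mathcal V}$ from Lemma~\ref{lem:AN}. You instead apply the subgradient inequality for $H^N$ itself, with tangent at ${\bf p}=DV^N$: using $\hat a({\bf p})$ as a test point in the supremum defining $H^N({\bf q})$ gives $H^N({\bf p})-H^N({\bf q})\le\sum_i\hat a^i({\bf p})(q^i-p^i)$, then the fixed-point relation \eqref{defhatapt} isolates the purely quadratic piece (which integrates to $E^N_Q$ via the conditional-expectation identity $q^i=\mathbb E[p^i\,|\,x^i]$) and leaves a cross term in $N\partial_i f^N(\hat a({\bf p}))$. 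What your route buys is that no $\check a$-subtraction trick is needed; what it costs is that you must prove an $L^\infty$ bound on the pointwise optimizer $\hat a(DV^N)$, which the paper never states (Lemma~\ref{lem:check_a_bdd} is for $\check a$ only). You correctly identify this as the main obstacle and the truncation argument you sketch does transfer verbatim (project $\hat a^i({\bf p})$ onto $B(0,C_G+\|Df_0\|_\infty)$; the $\tilde h_{ij}$ terms decrease by monotonicity and the $1$-Lipschitz projection, and the remaining terms are handled exactly as in the proof of Lemma~\ref{lem:check_a_bdd} using $|p^i|\le C_G/N$). With that lemma in place your Cauchy--Schwarz step gives precisely the factor $2(C_G+\|Df_0\|_\infty)\sqrt{E^N_Q}$, matching the statement.

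Two minor imprecisions, neither fatal. First, the identity is $E^N=E^N_2-E^N_1$, not $E^N=E^N_1+E^N_2$; since $|E^N_1|\le K_1$, your conclusion $E^N_2=E^N+E^N_1\ge -K_1$ still follows once $E^N\ge 0$ is granted. Second, in the lower-bound heuristic you compare $\mathcal H^N(({p^i})_i,{\bf m})$ with $\int H^N({\bf p})\,d{\bf m}$ as if they shared the same costate, but in $E^N$ the two Hamiltonians are evaluated at different costates (${\bf p}=DV^N$ versus ${\bf q}=D_{m^i}\mathcal V^N$). The constraint-set inclusion alone is not enough; you also need the observation that for any distributed control ${\bf a}$ one has $\int a^i(x^i)p^i(\bx)\,{\bf m}(d\bx)=\int a^i(x^i)q^i(x^i)\,m^i(dx^i)$, so $\mathcal H^N({\bf q},{\bf m})=\mathcal H^N({\bf p},{\bf m})\le\int H^N({\bf p})\,d{\bf m}$. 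You do invoke the conditional-expectation structure later in the proof, so the gap is a matter of presentation, but it should be made explicit here (this is exactly the Fubini step in the paper's lower-bound argument).
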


        %$K_1:=4C_G\max_{ij}\Vert h'_{ij}\Vert_{\infty,C_G}\sqrt{\frac{1}{N^3}\sum_{ij}\Vert h_{ij}''\Vert_{\infty}^2}\big (3+\frac{\sqrt{2}}{N}\sqrt{\sum_{ij}\Vert h_{ij}''\Vert_{\infty}^2}\big)\,,$
   
\begin{proof}[Proof of Prop. \ref{prop:E1/EQ}]

Let ${\bf m} \in \mathcal{P}_2(\R^d)^N$ and $t \in [0,T]$. For the sake of simplicity,
define for each $i \in \llbracket1,N \rrbracket,$ 
\begin{align}
    \label{def:pi}
    p^i:=D_iV^N(t,\cdot),
\end{align} and
\begin{align}
    \label{def:qi}
    q^i:=\int_{(\R^d)^{N-1}}p^i(\boldsymbol{x}){\bf m}^{-i}(d\boldsymbol{x^{-i}})=D_{m^i} \mathcal V^N(t,{\bf m}^{-i},\cdot)\,.
\end{align}

By Lemma \ref{lem:Lip_V}, we see that for all $i \in \llbracket1,N \rrbracket,$
$$\Vert p^i \Vert_{\infty}\leq \frac{C_G}{N}\,. $$
We first prove that $ E_2^N(t,{\bf m})\geq- K_1$.\\

Recalling Definition \ref{def:check_alpha}, let ${\bf \check a(q)}:={\bf \check a(q,{\bf m})}$ be the optimal solution associated to the problem $$\mathcal H^N((q^i)_{i=1,\dots, N}, {\bf m})
= \sup_{(a^i=a^i(x^i))} -\int_{(\R^d)^N} (\sum_i (a^i(x^i) q^i(x^i) +\frac{1}{2N} |a^i(x^i)|^2) + f^N({\bf a(x)})) {\bf m}(d{\bf x}). 
$$

Thus,
\begin{align*}
    \int_{(\R^d)^N}&H^N(\boldsymbol{p(x)}){\bf m}(d\boldsymbol{x}) \\
    &\geq \int_{(\R^d)^N}\biggl(-\sum_i\check a^i({\bf q})(x^i)p^i(\boldsymbol{x})+\frac{1}{2N}|\check a^i({\bf q})(x^i)|^2-f^N({\bf \check a(q)(x)})\biggr){\bf m}(d\boldsymbol{x})\\
    & = \int_{\R^d}\biggl(-\sum_i\check a^i({\bf q})(x^i)\int_{(\R^d)^{N-1}}p^i(\boldsymbol{x}){\bf m}^{-i}(d\boldsymbol{x^{-i}})+\frac{1}{2N}|\check a^i({\bf q})(x^i)|^2\biggr)m^i(dx^i)\\
& \qquad -\int_{(\R^d)^N}f^N({\bf \check a(q)(x)}){\bf m}(d\boldsymbol{x}) \\
    &= \int_{(\R^d)^N}\biggl(-\sum_i\check a^i({\bf q})(x^i)q^i(x^i)+\frac{1}{2N}|\check a^i({\bf q})(x^i)|^2-f^N({\bf \check a(q)(x)})\biggr){\bf m}(d\boldsymbol{x})\\
    &= \mathcal{H}^N(\boldsymbol{q}, {\bf m})\\
    &\geq \int_{(\R^d)^N}H^N(\boldsymbol{q}(x)){\bf m}(d\boldsymbol{x}) - E^N_1(t,{\bf m})\,.
\end{align*}
We use Prop. \ref{prop:bd_E1} to conclude that

$$
\int_{(\R^d)^N}H^N(\boldsymbol{p}){\bf m}(d\boldsymbol{x})-\int_{(\R^d)^N}H^N(\boldsymbol{q}){\bf m}(d\boldsymbol{x})\geq -K_1\,.
$$
We now want to prove 
\begin{align*}
     E_2^N(t,{\bf m}) \leq E_Q^N(t,{\bf{m}})+2(C_G+\Vert Df_0\Vert_{\infty})\sqrt{E_Q^N(t,{\bf{m}})}\,.
\end{align*}

    Define  \[ \phi := \begin{cases}
(\R^d)^N \to \mathbb{R^d}\\
{\bf y} \mapsto N|{\bf y}|^2-  H^N({\bf y})\,,
\end{cases}
\]

and let's show that it is convex. \\

First notice that, by the envelope theorem, for all $y \in (\R^d)^N$, $D_{y^i} \phi({\bf y})= 2Ny^i+ \hat a^i({\bf y})$. \\

Let ${\bf y},{\bf \tilde y} \in (\R^d)^N$,
\begin{align*}
    \langle \nabla \phi({\bf y})-\nabla \phi({\bf \tilde y}),{\bf y} -{\bf \tilde y}\rangle&=\sum_i \biggl[2N(y^i -\tilde y^i)+\hat a^i(y)-\hat a^i(\tilde y)\biggr]\cdot\biggl[y^i -\tilde y^i \biggr]\\
    &= \sum_i 2N\vert y^i -\tilde y^i\vert^2+\sum_i(\hat a^i(y)-\hat a^i(\tilde y))(y^i -\tilde y^i)\\
    &\geq \sum_i 2N\vert y^i -\tilde y^i \vert^2-\sqrt{N^2\sum_i\vert y^i -\tilde y^i\vert^2}\sqrt{\sum_i\vert y^i -\tilde y^i\vert ^2}\\
    &=\sum_i 2N\vert y^i -\tilde y^i\vert ^2-\sum_i N\vert y^i -\tilde y^i \vert ^2\\
    &\geq 0\,.
\end{align*}
where $\hat a(.)$ is defined in \ref{def:hat_a_pt} and where the first inequality comes from Lemma \ref{lem:bd_diff_a}.\\

%\begin{align*}
%   \sum_i\bigl(\partial_if^N(\hat a(p))- \partial_if^N(\hat a(\tilde p))\bigr)\times(p^i -\tilde p^i)&=\sum_i\bigl(\partial_if^N(\hat a(p))- \partial_if^N(\hat a(\tilde p))\bigr)\times(\hat a^i(p)- \hat a^i(\tilde p))\times \frac{(p^i -\tilde p^i)}{\hat a^i(p)- \hat a^i(\tilde p)}
%\end{align*}

Recalling the definition of ${\bf p}$ and ${\bf q}$ respectively in \eqref{def:pi} and \eqref{def:qi}, we deduce that
\begin{align*}
 &\int_{(\R^d)^N}\phi({\bf p(x)}){\bf m}(d\bx)  \\
 &\geq \int_{(\R^d)^N}\phi({\bf q(x)}{\bf m}(d\bx)  +\sum_i  \int_{(\R^d)^N}\big (2Nq^i(x^i)+\hat a^i({\bf q(x)})\big )\cdot (p^i({\bf x})-q^i(x^i)) {\bf m}(d\bx)  \,.
\end{align*}
First, one can notice that $ \int_{(\R^d)^N} 2N q^i(x^i)\cdot (p^i(\bx)-q^i(x^i)) {\bf m}(d\bx)=0$.\\

Secondly, 
\begin{align*}
  \sum_i  \int_{(\R^d)^N}&\hat a^i({\bf q}(\bx))\cdot (p^i(\bx)-q^i(x^i)) {\bf m}(d\bx) \\
 &\geq  \sum_i  \int_{(\R^d)^N}(\hat a^i({\bf q}(\bx))-\check a^i(q^i(x^i))+\check a^i(q^i(x^i)))\cdot (p^i(\bx)-q^i(x^i)) {\bf m}(d\bx) \\
   & =  \sum_i  \int_{(\R^d)^N}(\hat a^i({\bf q}(\bx))-\check a^i(q^i(x^i)))\cdot (p^i(\bx)-q^i(x^i)) {\bf m}(d\bx) \\
   &\geq -2(C_G +\Vert Df_0\Vert_\infty) \sqrt{  N\int_{(\R^d)^N}\sum_i \vert q^i(x^i)-p^i(\bx)\vert^2 {\bf m}(d\bx)}\,,
\end{align*}
because $\int_{(\R^d)^N}\check a^i(q^i(x^i))\cdot (q^i(x^i)-p^i(\bx)) {\bf m}(\bx)=0$. The last inequality comes from Cauchy Schwarz inequality and from the boundedness of $\sum_i  \int_{(\R^d)^N}(\hat a^i({\bf q}(\bx))-\check a^i(q^i(x^i)))^2$ according to Lemma \ref{lem:AN}. \\

We thus have 
\begin{align*}
 \int_{(\R^d)^N}\phi({\bf p}(\bx)){\bf m}(\bx)&\geq \int_{(\R^d)^N}\phi({\bf q}(\bx)){\bf m}(\bx)  \\
& \qquad -2(C_G +\Vert Df_0\Vert_\infty) \sqrt{  N\int_{(\R^d)^N}\sum_i \vert q^i(x^i)-p^i(\bx)\vert^2 {\bf m}(\bx)} . 
\end{align*}
Therefore, with $$\int_{(\R^d)^N}\phi({\bf p}(\bx)){\bf m}(\bx)= \int_{(\R^d)^N}N\vert {\bf p}(\bx)\vert^2{\bf m}(\bx)-\int_{(\R^d)^N}H^N({\bf p}(\bx)){\bf m}(\bx),$$ we get, 
\begin{align*}
    E_2^N(t,{\bf{m}}) \leq E_Q^N(t,{\bf{m}})+2(C_G +\Vert Df_0\Vert_\infty)\sqrt{E_Q^N(t,{\bf{m}})}\,.
\end{align*}

\end{proof}

\subsection{Dynamics of $E_Q^N$ along a certain curve}

Recall that the definition of $E^N_Q$ is given in Proposition \ref{prop:E1/EQ}.\\
According to Lemma \ref{lem:diffvalfx}, we do not need to find a uniform bound for the error $E^N$ but we will only focus on $E^N$ applied to a specific distribution, namely $({\bf m_t})_{t\in[0,T]}=(\mathcal{L}(\check X^{\boldsymbol \mu,1}_t),...,\mathcal{L}(\check X^{\boldsymbol \mu,N}_t)\big )_{t\in[0,T]}$, where ${\bf \check X}$ is the process defined by the SDE \eqref{def:checkX}. In fact, by Proposition \ref{prop:E1/EQ}, it suffices to focus on $E^N_Q$ applied to $({\bf m_t})_{t\in[0,T]}\,.$\\

Let us introduce some notations. \\

Denote by $\check a^{\mathcal V}$ the control function of the form
   \begin{align}
       \label{def:checkaVcal}
       \check a^{\mathcal V}  : \begin{cases}
[0,T] \times \mathcal{P}_2(\R^d)^N \times (\R^d)^N \to (\mathbb{R}^d)^N\\
(t,{\bf m},\bx) \mapsto \check{a}\big ((D_{m^i}\mathcal{V}^N(t,{\bf m}^{-i},\cdot))_i,{\bf m}\big )(\bx),
\end{cases}
   \end{align}
   where $\check a$ is given in Definition \ref{def:check_alpha}.\\
   
Denote also by $\hat a^{\mathcal{V}}$ and $\hat a^{V}$, the control functions of the form
   \begin{align}
   \label{def:aVcal}
       \hat a^{\mathcal{V}}  :\begin{cases}
[0,T] \times \mathcal{P}_2(\R^d)^N \times (\R^d)^N \to (\mathbb{R}^d)^N\\
(t,{\bf m},\bx) \mapsto \hat{a} \circ\big((D_{m^i}\mathcal{V}^N(t,{\bf m^{-i}},\cdot))_i\big )(\bx),
\end{cases}
   \end{align}

   \begin{align}
       \label{def:aV}
       \hat a^{V}  : \begin{cases}
[0,T] \times (\R^d)^N \to (\mathbb{R}^d)^N\\
(t,\bx) \mapsto \hat{a} \circ \big (DV^N(t,\cdot)\big )(\bx)\,.
\end{cases}
   \end{align}

where $\hat{a}$ is given in Definition \ref{def:hat_a_pt}.\\

As mentioned above, studying the dynamics of $E_Q^N(s,{\bf m_s})$ will be sufficient to bound $E^N(s,{\bf m_s})$, for each $s \in [0,T]$.

\begin{prop}
    \label{prop:EQ_mT}
    Given $\boldsymbol \mu \in \mathcal{P}_2(\mathbb{R}^d)^N$ satisfying the Poincaré inequality with some constant $c_p$ and $t \in [0,T]$, consider ${\bf \check X}^{t,\boldsymbol \mu}$ solving the previous McKean-Vlasov SDE \eqref{def:checkX}.
Then, for all $s \in [t,T],$ 
\begin{align}
    \label{ineq:EQ}
E_Q^N(s,{\bf m_s}) 
    &\leq e^{3\frac{C_G}{N}(T-s)}E_Q^N(T,{\bf m_T}) \nonumber\\
    & \qquad+ \frac{(C_G+\Vert Df_0\Vert_{\infty})^2}{N}(\frac{4}{N^2}\sum_{ij}\Vert D^2h_{ij}\Vert_{\infty}^2+\Vert D^2f_0\Vert_{\infty}^2)\big ( e^{3\frac{C_G}{N}(T-s)}-1
    \big )
    \,. 
    \end{align}
where $$E_Q^N(T,{\bf m_T})=C_pN \mathbb{E} \sum_{ij} |D_{ij} g^N({\bf \check X}^{t,\boldsymbol \mu}_T)|^2,$$ 
 \text{ and}
$$C_p:=\frac{e^{2C_G T} - 1}{2C_G} + c_p e^{2C_G T}\,.$$

\end{prop}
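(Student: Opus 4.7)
The strategy is to combine a terminal estimate for $E_Q^N(T,\mathbf{m}_T)$ based on the Poincaré inequality with a backward Gronwall argument applied to the map $s\mapsto E_Q^N(s,\mathbf{m}_s)$. A useful preliminary observation is that the drift $\check{a}^{\mathcal{V},i}(s,\cdot)$ in the MV-SDE \eqref{def:checkX} depends on $\check{X}^i_s$ only through the $i$-th coordinate (the other marginals being deterministic), and the initial law $\boldsymbol{\mu}=\mu^1\otimes\cdots\otimes\mu^N$ is a product. Consequently the components $(\check{X}^i_s)_i$ remain mutually independent, so $\mathbf{m}_s=m^1_s\otimes\cdots\otimes m^N_s$ for every $s\in[t,T]$. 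Setting $Y^i_s:=D_iV^N(s,\check{\mathbf{X}}_s)$ and $Z^i_s:=D_{m^i}\mathcal{V}^N(s,\mathbf{m}_s^{-i},\check{X}^i_s)$, the identity $Z^i_s=\mathbb{E}[Y^i_s\mid\check{X}^i_s]$ together with the $L^2$ orthogonality of conditional expectations rewrites the error as
\begin{equation*}
E_Q^N(s,\mathbf{m}_s)=N\sum_i\mathbb{E}\,|Y^i_s-Z^i_s|^2.
\end{equation*}

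For the terminal bound, $Y^i_T=D_ig^N(\check{\mathbf{X}}_T)$, and by Poincaré applied to the product measure $\mathbf{m}_T^{-i}$ (with constant $c_T$),
\begin{equation*}
\mathbb{E}\,|Y^i_T-Z^i_T|^2\leq c_T\sum_{j\neq i}\mathbb{E}\,|D^2_{ij}g^N(\check{\mathbf{X}}_T)|^2.
\end{equation*}
The constant $c_T$ is controlled by propagating Poincaré along the distributed flow: Lemma~3.6 combined with the bound $\|\partial_xD_{m^i}\mathcal{V}^N\|_\infty\leq C_G/N$ (a direct consequence of Lemma~3.1) shows that $\check{a}^{\mathcal{V},i}$ is $C_G$-Lipschitz in $x^i$, and the classical commutation-of-gradient estimate for diffusions with Lipschitz drift then yields $c_{t+\tau}\leq c_p\,e^{2C_G\tau}+(e^{2C_G\tau}-1)/(2C_G)\leq C_p$ for $\tau\leq T$. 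Summing over $i$ produces the stated formula for $E_Q^N(T,\mathbf{m}_T)$.

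For the propagation, I would compute $\tfrac{d}{ds}E_Q^N(s,\mathbf{m}_s)$ via Ito's formula. Differentiating the HJB \eqref{eq:V} in $x^i$ and using the envelope relation $\partial_{p^j}H^N=-\hat{a}^{V,j}$ gives
\begin{equation*}
dY^i_s=\sum_jD^2_{ij}V^N(s,\check{\mathbf{X}}_s)\big(\check{a}^{\mathcal{V},j}(s,\check{X}^j_s)-\hat{a}^{V,j}(s,\check{\mathbf{X}}_s)\big)\,ds+\sum_jD^2_{ij}V^N\,dW^j_s,
\end{equation*}
while the analogous expansion for $Z^i_s=\phi^i(s,\check{X}^i_s)$, where $\phi^i(s,x):=D_{m^i}\mathcal{V}^N(s,\mathbf{m}_s^{-i},x)$ satisfies the PDE obtained by differentiating the lifted HJB in $m^i$, supplies compensating drift and diffusion terms. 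Exploiting the orthogonality $\mathbb{E}[(Y^i-Z^i)\,\psi(\check{X}^i)]=0$, the Hessian bound $\|D^2V^N\|_{\mathrm{op}}\leq C_G/N$, the $L^\infty$ bound $\|\check{a}^{\mathcal{V},i}\|_\infty\leq C_G+\|Df_0\|_\infty$ (Lemma~3.7), and Lemma~4.3 to control $\sum_i\mathbb{E}|\hat{a}^{V,i}-\check{a}^{\mathcal{V},i}|^2$, Young's inequality produces a differential inequality of the form
\begin{equation*}
\tfrac{d}{ds}E_Q^N(s,\mathbf{m}_s)\geq-\tfrac{3C_G}{N}\,E_Q^N(s,\mathbf{m}_s)-\tfrac{3C_G}{N}\cdot\tfrac{(C_G+\|Df_0\|_\infty)^2}{N}\Big(\tfrac{4}{N^2}\textstyle\sum_{ij}\|D^2h_{ij}\|_\infty^2+\|D^2f_0\|_\infty^2\Big),
\end{equation*}
and a backward Gronwall argument then delivers the claimed bound.

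The main technical obstacle is the rigorous derivation of $dZ^i_s$. Because $\phi^i$ depends on the evolving marginals $\mathbf{m}_s^{-i}$, one must either invoke a measure-flow Ito formula or differentiate the lifted HJB for $\mathcal{V}^N$ in $m^i$ and carefully track the contribution of the flow $\partial_s\mathbf{m}_s^{-i}$ into the drift of $Z^i_s$. A secondary difficulty is pinning down the exact constant $3C_G/N$ in the exponent: this requires balancing three Young splittings corresponding to the drift discrepancy $\check{a}^{\mathcal{V}}-\hat{a}^V$, the quadratic-variation contribution from $\sum_j|D^2_{ij}V^N|^2$, and the cross term produced by the projection $Y^i-Z^i$.
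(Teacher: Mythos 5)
Your proof follows the paper's approach essentially exactly: differentiate $E_Q^N(s,\mathbf{m}_s) = N\sum_i \mathbb{E}|Y^i_s - Z^i_s|^2$ via Ito, use the Hessian bound $D^2V^N \leq \frac{C_G}{N}I$, Lemma~\ref{lem:AN} for the drift discrepancy, and Young plus a backward Gronwall, with the terminal value controlled by propagating the Poincaré constant along the distributed flow (Lemma~\ref{lem:Lip_aVcal} together with Lemma 4.12 of Jackson--Lacker). One small precision: Lemma~\ref{lem:AN} only controls $\sum_i\mathbb{E}|\hat{a}^{\mathcal{V},i} - \check{a}^{\mathcal{V},i}|^2$, not $\sum_i\mathbb{E}|\hat{a}^{V,i} - \check{a}^{\mathcal{V},i}|^2$ as you write; the remaining piece $\hat{a}^{\mathcal{V}} - \hat{a}^{V}$ is handled via the $N$-Lipschitz estimate of Lemma~\ref{lem:bd_diff_a} applied to $D_{m^i}\mathcal{V}^N - D_iV^N$ and is exactly what regenerates the $-\tfrac{3C_G}{N}E_Q^N$ term in your differential inequality, so the final conclusion you state is correct.
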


The proof of the proposition requires several preliminary steps.

\begin{lem}
    \label{lem:Lip_aVcal}
    Given $t \in [0,T]$ and ${\bf m} \in \mathcal{P}_2(\mathbb{R}^d)^N$, let $\bx, \bar{\bx} \in (\R^d)^N$. Then, 
\begin{align*}
    \Vert \check a^{\mathcal{V}}(t,{\bf m},\bx)-\check a^{\mathcal{V}}(t,{\bf m},\bar{\bx}) \Vert^2\leq C_G^2\Vert \bx-\bar \bx \Vert^2\,.
\end{align*}

    In other words, $\check a^{\mathcal{V}}$ is $C_G-$Lipschitz with respect to its third variable $\bx$.
\end{lem}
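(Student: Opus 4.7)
The plan is to reduce the Lipschitz estimate for $\check a^{\mathcal V}$ to a composition of two Lipschitz estimates already available: one on $\check a$ viewed as a function of its first argument ${\bf p}$ (via Lemma \ref{lem:bd_diff_check_a}) and one on the map $x^i \mapsto D_{m^i}\mathcal V^N(t,{\bf m^{-i}},x^i)$ (via Lemma \ref{lem:D2V}). The crucial structural point I will exploit is that, because we work in the \emph{distributed} framework, the $i$-th component $\check a^{\mathcal V, i}(t,{\bf m},\bx)$ depends on $\bx$ only through the $i$-th coordinate $x^i$. Thus the $(\R^d)^N$-Lipschitz constant decouples nicely across components, and the $\ell^2$ Lipschitz bound follows by summing componentwise estimates.

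First, I recall that for any $(t,{\bf m})$, Lemma \ref{lem:bd_diff_check_a} applied with $p^i := D_{m^i}\mathcal V^N(t,{\bf m^{-i}},\cdot)$ gives, for each $i$ and any $x^i,\bar x^i \in \R^d$,
\begin{equation*}
|\check a^{\mathcal V, i}(t,{\bf m},\bx) - \check a^{\mathcal V, i}(t,{\bf m},\bar\bx)| \;\le\; N\,|p^i(x^i) - p^i(\bar x^i)|.
\end{equation*}
Next I need to control $|p^i(x^i) - p^i(\bar x^i)|$. Using the representation of the Lions derivative from the remark after equation \eqref{eq:Vdist}, namely
\begin{equation*}
D_{m^i}\mathcal V^N(t,{\bf m^{-i}},x^i) = \int_{(\R^d)^{N-1}} D_i V^N(t,{\bf y^{-i}},x^i)\,{\bf m^{-i}}(d{\bf y^{-i}}),
\end{equation*}
the bound $D^2 V^N(t,\cdot) \le \frac{C_G}{N} I_{Nd\times Nd}$ from Lemma \ref{lem:D2V}, together with the non-negativity $D^2 V^N \ge 0$, implies $\|D^2 V^N(t,\cdot)\|_{op}\le \frac{C_G}{N}$. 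Consequently $D_i V^N(t,\cdot)$ is $\frac{C_G}{N}$-Lipschitz when only the $i$-th coordinate varies, and integrating against ${\bf m^{-i}}$ preserves this Lipschitz constant:
\begin{equation*}
|p^i(x^i) - p^i(\bar x^i)| \;\le\; \frac{C_G}{N}\,|x^i - \bar x^i|.
\end{equation*}

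Combining the two estimates yields, componentwise, $|\check a^{\mathcal V, i}(t,{\bf m},\bx) - \check a^{\mathcal V, i}(t,{\bf m},\bar\bx)| \le C_G\,|x^i - \bar x^i|$. Squaring and summing over $i = 1,\dots,N$ gives the desired inequality
\begin{equation*}
\|\check a^{\mathcal V}(t,{\bf m},\bx) - \check a^{\mathcal V}(t,{\bf m},\bar\bx)\|^2 \;\le\; C_G^2\,\|\bx - \bar\bx\|^2.
\end{equation*}
The main (and really only) subtlety is the passage from the bound on $D^2 V^N$ as a quadratic form to an operator-norm bound on $D V^N$, but this is immediate once one observes that $D^2 V^N$ is simultaneously non-negative and bounded above by $\frac{C_G}{N} I$. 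Everything else is a one-line composition of the two Lipschitz inequalities.
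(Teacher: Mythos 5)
Your proof is correct and follows essentially the same route as the paper's: both combine Lemma \ref{lem:bd_diff_check_a} (Lipschitz in ${\bf p}$) with Lemma \ref{lem:D2V} (bound on $D^2V^N$) via the Lions-derivative representation. The only cosmetic difference is that you pass directly to a componentwise Lipschitz bound using the fact that averaging preserves Lipschitz constants, whereas the paper inserts Jensen's inequality to pull the square inside the integral; your remark that the non-negativity $D^2V^N\ge 0$ is needed (to convert the quadratic-form bound into an operator-norm bound) is exactly the point the paper uses implicitly.
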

\begin{proof}
Let $t \in [0,T]$, ${\bf m} \in \mathcal{P}_2(\mathbb{R}^d)^N$ and $\bx, \bar{\bx} \in (\R^d)^N$. By Lemma \ref{lem:D2V}, $V^N$ is twice differentiable and $D^2V^N \leq \frac{C_G}{N} I_{nd\times nd}$, so $DV^N$ is $\frac{C_G}{N}$-Lipschitz.
Therefore,
\begin{align}
    \label{ineq:b_lip}
    \Vert \check a^{\mathcal{V}}&(t,{\bf m},\bx)-\check a^{\mathcal{V}}(t,{\bf m},\bar{\bx}) \Vert^2 \nonumber\\
    &=\sum_i \vert\check a^i\big ((D_{m^i}\mathcal{V}^N(t,{\bf m}^{-i},\cdot))_i,{\bf m}\big )(x^i)-\check a^i\big ((D_{m^i}\mathcal{V}^N(t,{\bf m}^{-i},\cdot))_i,{\bf m}\big )(\bar x^i)\vert^2\nonumber \\
    & \leq N^2\sum_i\vert D_{m^i}\mathcal{V}^N(t,{\bf m}^{-i},x^i)-D_{m^i}\mathcal{V}^N(t,{\bf m}^{-i},\bar x^i))\vert^2\nonumber \\
    &\leq N^2\sum_i\biggl|\int_{(\R^d)^{N-1}}D_iV^N(t,{\bf y^{-i}},x^i)-D_iV^N(t,{\bf y^{-i}},\bar x^i){\bf m}^{-i}(d{\bf {\bf y^{-i}}})\biggr|^2\nonumber \\
    &\leq N^2\sum_i\int_{(\R^d)^{N-1}}\vert D_iV^N(t,{\bf y^{-i}},x^i)-D_iV^N(t,{\bf y^{-i}},\bar x^i)\vert^2{\bf m}^{-i}(d{\bf {\bf y^{-i}}})\nonumber \\
    & \leq N^2\frac{C_G^2}{N^2}\sum_i\vert x^i-\bar x^i\vert^2\nonumber \\
    &=C_G^2\Vert \bx-\bar \bx \Vert^2\,,
\end{align}
where the first inequality comes from Lemma \ref{lem:bd_diff_check_a}, the second from the definition of the lift of $V$ and the last inequality from Lemma \ref{lem:D2V}.
\end{proof}

\begin{lem}
 \label{ineq:b_holder}
    For all $\bx, \bar{\bx} \in (\R^d)^N, t \in [0,T]$, and ${\bf m} \in \mathcal{P}_2(\mathbb{R}^d)^N$, we have
    $$\big (\check a^{\mathcal{V}}(t,{\bf m},\bar{\bx})-\check a^{\mathcal{V}}(t,{\bf m},\bx) \big )\cdot(\bx-\bar \bx)\leq C_G\sum_i\vert x^i-\bar x^i\vert ^2\,.$$
\end{lem}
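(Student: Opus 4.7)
The plan is to obtain this one-sided estimate as an immediate corollary of the two-sided Lipschitz bound already established in Lemma \ref{lem:Lip_aVcal}, combined with the Cauchy--Schwarz inequality on the Euclidean space $(\R^d)^N$. Recall that the inner product on $(\R^d)^N$ decomposes as $\bx\cdot \by = \sum_i x^i\cdot y^i$ and that $\Vert \bx\Vert^2 = \sum_i |x^i|^2$, so the claimed right-hand side $C_G\sum_i|x^i-\bar x^i|^2$ is exactly $C_G\Vert \bx-\bar\bx\Vert^2$.

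Concretely, I would first apply Cauchy--Schwarz to write
\begin{align*}
\bigl(\check a^{\mathcal{V}}(t,{\bf m},\bar{\bx})-\check a^{\mathcal{V}}(t,{\bf m},\bx)\bigr)\cdot(\bx-\bar\bx)
\leq \Vert \check a^{\mathcal{V}}(t,{\bf m},\bar{\bx})-\check a^{\mathcal{V}}(t,{\bf m},\bx)\Vert\,\Vert \bx-\bar\bx\Vert .
\end{align*}
Then, taking square roots in the bound of Lemma \ref{lem:Lip_aVcal} gives $\Vert \check a^{\mathcal{V}}(t,{\bf m},\bar{\bx})-\check a^{\mathcal{V}}(t,{\bf m},\bx)\Vert \leq C_G \Vert \bx-\bar\bx\Vert$. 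Substituting this back into the previous display yields $C_G\Vert \bx-\bar\bx\Vert^2 = C_G\sum_i |x^i-\bar x^i|^2$, which is the claim.

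I do not anticipate any real obstacle: the statement is essentially an algebraic consequence of the preceding Lipschitz estimate. An alternative direct route would exploit the fixed-point characterization \eqref{defchecka} of $\check a^{\mathcal{V},i}$ together with the convexity of $f^N$ in each coordinate $a^i$ (items 2 and 3 of Assumption \ref{hyp:fG}) and the fact that $p^i = D_{m^i}\mathcal V^N(t,{\bf m}^{-i},\cdot)$ is monotone with Jacobian bounded by $C_G/N$ (which follows from Lemma \ref{lem:D2V} and the formula $D_{m^i}\mathcal V^N(t,{\bf m}^{-i},x^i)=\int D_i V^N(t,\by^{-i},x^i){\bf m}^{-i}(d\by^{-i})$). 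However, such a derivation does not sharpen the constant compared to the Cauchy--Schwarz route and would only duplicate arguments already used in Lemmas \ref{lem:bd_diff_check_a} and \ref{lem:Lip_aVcal}, so the short proof via Lemma \ref{lem:Lip_aVcal} is preferable.
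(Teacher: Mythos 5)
Your proof is correct and is exactly the paper's argument: the authors also note that the lemma follows directly from Lemma \ref{lem:Lip_aVcal} together with Cauchy--Schwarz. Nothing further is needed.
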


\begin{proof}
The proof is direct using Lemma \ref{lem:Lip_aVcal} and applying Cauchy-Schwarz inequality.
\end{proof}
We can now prove the previous proposition.

\begin{proof}[Proof of Prop. \ref{prop:EQ_mT}]

This proof closely follows the proof of Theorem $4.5$ in \cite{jackson2023approximatelyoptimaldistributedstochastic}.\\

  Let $\boldsymbol \mu \in \mathcal{P}_2(\mathbb{R}^d)^N$ satisfying the Poincaré inequality with some constant $c_p$ and $t \in [0,T]$, and introduce ${\bf \check X}:={\bf \check X}^{t,\boldsymbol \mu}$ solution to the SDE \eqref{def:checkX}, namely, for all $i = 1, \dots, N,$
\begin{align*}
    d\check{X}^{t,\boldsymbol \mu,i}_{s} = \check a^{i}((D_{m^i}\mathcal{V}^N(t,{\bf m_s^{-i}},\cdot))_i,{\bf m_s})\big(s,\check{X}^{t,\boldsymbol \mu,i}_{s}\big ) ds + dW^i_s, \quad s \in (t, T), \quad \mathcal{L}(\check{X}^{t,\boldsymbol \mu,i}_{t})=\mu^i
\end{align*}
with
\[
{\bf m_s} = ( \mathcal{L}(\check{X}^{t,\boldsymbol \mu,1}_{s}), \dots, \mathcal{L}(\check{X}^{t,\boldsymbol \mu,N}_{s}) )\quad  \forall s \in (t, T)\,.
\] \\
The strategy used in \cite{jackson2023approximatelyoptimaldistributedstochastic} is to compute the differential of the Ito process 
$$|D_i V^N (s, {\bf \check{X}_s})|^2 - |D_{m^i} \mathcal V^N (s, {\bf m^{-i}_s},\check{X}^i_{s})|^2=
|D_i V^N (s, {\bf \check{X}_s})|^2 - |\E[D_i V^N (s, {\bf \check{X}_s}) | \check{X}^i_{s}]|^2\,.
$$
Similarly, by differentiating the term above, using the Ito formula and recalling the definitions of \( \check a^{\mathcal{V}} \) and \( \hat a^{V} \), we obtain the following:

\begin{align*}
    \frac{dE_Q^N}{dt}(s,m_s)=A_1+A_2\,,
\end{align*}
with 
\begin{align*}
    \left\{
    \begin{array}{l}
    A_1 := \E\biggl[ \sum_{i,j}\biggl(D_iV^N(s,{\bf \check{X}_s})-\E\big(D_iV^N(s,{\bf \check{X}_s}) \vert \check{X}_s^i \big )\biggr)^T\\
    \qquad \qquad \qquad D_{ij}V^N(s,{\bf \check{X}_s})\biggl( \check a^{\mathcal{V}}_j (s, {\bf m_s},{\bf \check{X}_{s}}) - \hat a^{V}_j (s, {\bf \check{X}_{s}})\biggr)\biggr]\,,\\
    A_2 := \frac{1}{2}\E\biggl[\sum_{i,j}\vert D_{ij}V^N(s,{\bf \check{X}_s})\vert^2 -\sum_i\vert \E(D_{ii}V^N(s,{\bf \check{X}_s})\vert \check{X}_s^i)\vert^2\biggr]\geq 0\,.
    \end{array}
    \right .
\end{align*}
We now focus on the term $A_1$. \\
First, recall that the operator norm is defined such that, for any $A \in \R^{Nd\times Nd}$: 
$$
\Vert A \Vert_{op}:= \sup_{\|v\|=1} \|A v\|\,.
$$
We decompose $A_1$ into two terms. 
\begin{align*}
    A_1 =A_{1,1}+A_{1,2}&\\
    :=\E\biggl[ \sum_{i,j}\biggl(D_i&V^N(s,{\bf \check{X}_s})-\E\big(D_iV^N(s,{\bf \check{X}_s}) \vert \check{X}_s^i \big )\biggr)^T\\
    & \qquad D_{ij}V^N(s,{\bf \check{X}_s})\biggl( \check a^{\mathcal{V}}_j (s, {\bf m_s},{\bf \check{X}_{s}})  -\hat a^{\mathcal{V}}_j (s, {\bf m_s},{\bf \check{X}_{s}})\biggr)\biggr]\\
     \quad + \E\biggl[ \sum_{i,j}\biggl(&D_iV^N(s,{\bf \check{X}_s})-\E\big(D_iV^N(s,{\bf \check{X}_s}) \vert \check{X}_s^i \big )\biggr)^T\\
     & \qquad D_{ij}V^N(s,{\bf \check{X}_s})\biggl( \hat a^{\mathcal{V}}_j (s, {\bf m_s},{\bf \check{X}_{s}})- \hat a^{V}_j (s, {\bf \check{X}_{s}})\biggr)\biggr]
\end{align*}

First,
\begin{align*}
    A_{1,1}=\biggl|& \E\biggl[ \sum_{i,j}\biggl(D_iV^N(s,{\bf \check{X}_{s}})-\E\big(D_iV^N(s,{\bf \check{X}_{s}}) \vert \check{X}_s^i \big )\biggr)^T\\
    & \qquad  D_{ij}V^N(s,{\bf \check{X}_{s}})\biggl(\check a^{\mathcal{V}}_j (s,{\bf m_s}, {\bf \check{X}_{s}}) - \hat a^{\mathcal V}_j (s,{\bf m_s}, {\bf \check{X}_{s}})\biggr)\biggr]\biggr|\\
     \leq& \Vert D^2 V^N\Vert_{op}  \sqrt{\E\sum_{i} \vert D_iV^N(s,{\bf \check{X}_{s}})-\E\big(D_iV^N(s,{\bf \check{X}_{s}}) \vert \check{X}_s^i \big )\vert^2} \\ 
     &\qquad \sqrt{\E\sum_{j} \vert \check a^{\mathcal{V}}_j (s,{\bf m_s}, {\bf \check{X}_{s}}) - \hat a^{\mathcal V}_j (s,{\bf m_s}, {\bf \check{X}_{s}})\vert^2}\\
       \leq& \frac{C_G}{N^{\frac{3}{2}}}\sqrt{E_Q^N(s,m_s)}\sqrt{8}(C_G+\Vert Df_0\Vert_{\infty})\sqrt{(\frac{4}{N^2}\sum_{ij}\Vert D^2h_{ij}\Vert_{\infty}^2+\Vert D^2f_0\Vert_{\infty}^2)}\,,
\end{align*}
where the first inequality comes from Cauchy-Schwarz inequality and the last inequality comes from Lemma \ref{lem:AN} which can be applied because, by Lemma \ref{lem:Lip_V}, that for each i, $\Vert D_{m^i}\mathcal V^N\Vert_{\infty} \leq \frac{C_G}{N}$ (where the constant $C_G$ is defined in item 4. of Assumption \ref{hyp:fG}).\\

Secondly, by Lemma \ref{lem:bd_diff_a}, we get
\begin{align}
    \label{ineq:a_p}
    \sum_j\vert\hat a^{\mathcal{V}}_j (s,{\bf m_s}, {\bf \check{X}_{s}}) - \hat a^{V}_j (s, {\bf \check{X}_{s}}) \vert^2\leq N^2\sum_j\vert D_{m^j} \mathcal V^N(s,{\bf m^{-j}_s},\check{X}_{s}^j)-D_jV(s,{\bf \check{X}_{s}})\vert^2 \,.
\end{align}
Then,
\begin{align*}
      A_{1,2}=\biggl| \E\biggl[ \sum_{i,j}\biggl(D_i&V^N(s,{\bf m_s},{\bf \check{X}_{s}})-\E\big(D_iV^N(s,{\bf \check{X}_{s}}) \vert \check{X}_s^i \big )\biggr)^T \\
      & \qquad \qquad D_{ij}V^N(s,{\bf \check{X}_{s}})\biggl(\hat a^{\mathcal{V}}_j (s,{\bf m_s}, {\bf \check{X}_{s}}) - \hat a^{V}_j (s, {\bf \check{X}_{s}})\biggr)\biggr]\biggr|\\
     \leq \Vert D^2 V^N\Vert_{op}& \E\biggl[ \sqrt{\sum_{i} \vert D_iV^N(s,{\bf \check{X}_{s}})-\E\big(D_iV^N(s,{\bf \check{X}_{s}}) \vert \check{X}_s^i \big )\vert^2} \\
     &\qquad \qquad \sqrt{\sum_{j} \vert \hat a^{\mathcal{V}}_j (s, {\bf m_s},{\bf \check{X}_{s}}) - \hat a^{V}_j (s, {\bf \check{X}_{s}})\vert^2}\biggr]\\
     \leq \Vert D^2 V^N\Vert_{op}&  \sqrt{\E\bigg[\sum_{i} \vert D_iV^N(s,{\bf \check{X}_{s}})-\E\big(D_iV^N(s,{\bf \check{X}_{s}}) \vert \check{X}_s^i \big )\vert^2\bigg]} \\
     &N\sqrt{\E\bigg[\sum_{i} \vert D_iV^N(s,{\bf \check{X}_{s}})-\E\big(D_iV^N(s,{\bf \check{X}_{s}}) \vert \check{X}_s^i \vert\bigr )^2\bigg]}\\
     \leq \Vert D^2 V^N\Vert_{op}&N \E\biggl[ \sum_{i} \vert D_iV^N(s,{\bf \check{X}_{s}})-\E\big(D_iV^N(s,{\bf \check{X}_{s}}) \vert \check{X}_s^i \big )\vert^2\biggr]\,.
     \end{align*}
     Therefore, by Lemma \ref{lem:D2V},
     \begin{align*}
         A_{1,2} \leq \frac{C_G}{N}N\E\biggl[ &\sum_{i} \vert D_iV^N(s,{\bf \check{X}_{s}})-\E\big(D_iV^N(s,{\bf \check{X}_{s}}) \vert \check{X}_s^i \big )\vert^2\biggr]=\frac{C_G}{N}E_Q^N(s,m_s)\,.
     \end{align*}
Therefore, summing the two previous results,
\begin{align*}
    \vert A_1 \vert \leq  \frac{C_G}{N}E_Q^N(s,{\bf m_s}) +\frac{C_G}{N^{\frac{3}{2}}}(C_G+\Vert Df_0\Vert_{\infty})\sqrt{8(\frac{4}{N^2}\sum_{ij}\Vert D^2h_{ij}\Vert_{\infty}^2+\Vert D^2f_0\Vert_{\infty}^2)}\sqrt{E_Q^N(s,m_s)}\,.
\end{align*}

We then find that 
\begin{align*}
    &\frac{dE_Q^N}{ds}(s,{\bf m_s})\geq -\frac{C_G}{N}E_Q^N(s,{\bf m_s})\\
    &\qquad -\sqrt{2}\sqrt{2\frac{C_G}{N^2}(C_G+\Vert Df_0\Vert_{\infty})^2(\frac{4}{N^2}\sum_{ij}\Vert D^2h_{ij}\Vert_{\infty}^2+\Vert D^2f_0\Vert_{\infty}^2)}\sqrt{2\frac{C_G}{N}E_Q^N(s,{\bf m_s})}\,.
\end{align*}
By Young's Inequality, 
\begin{align*}
&\sqrt{2\frac{C_G}{N^2}(C_G+\Vert Df_0\Vert_{\infty})^2(\frac{4}{N^2}\sum_{ij}\Vert D^2h_{ij}\Vert_{\infty}^2+\Vert D^2f_0\Vert_{\infty}^2)}\sqrt{2\frac{C_G}{N}E_Q^N(s,{\bf m_s})}\\
&\leq \frac{C_G}{N^2}(C_G+\Vert Df_0\Vert_{\infty})^2(\frac{4}{N^2}\sum_{ij}\Vert D^2h_{ij}\Vert_{\infty}^2+\Vert D^2f_0\Vert_{\infty}^2)+\frac{C_G}{N}E_Q^N(s,{\bf m_s})\,.
\end{align*}

Therefore, 
\begin{align*}
    &\frac{dE_Q^N}{ds}(s,{\bf m_s})\\
    &\geq -\frac{(1+\sqrt{2})C_G}{N}E_Q^N(s,{\bf m_s})-\frac{2C_G}{N^2}(C_G+\Vert Df_0\Vert_{\infty})^2(\frac{4}{N^2}\sum_{ij}\Vert D^2h_{ij}\Vert_{\infty}^2+\Vert D^2f_0\Vert_{\infty}^2)\\
    &\geq -\frac{3C_G}{N}E_Q^N(s,{\bf m_s})-\frac{2C_G}{N^2}(C_G+\Vert Df_0\Vert_{\infty})^2(\frac{4}{N^2}\sum_{ij}\Vert D^2h_{ij}\Vert_{\infty}^2+\Vert D^2f_0\Vert_{\infty}^2)\,.
    \end{align*}
We then apply Gronwall's inequality, we get for all $s \in [0,T]$ : 
\begin{align*}
    E_Q^N(s,{\bf m_s}) &\leq e^{3\frac{C_G}{N}(T-s)}E_Q^N(T,{\bf m_T})\\
    & \qquad+ \frac{N}{3C_G}\frac{2C_G}{N^2}(C_G+\Vert Df_0\Vert_{\infty})^2(\frac{4}{N^2}\sum_{ij}\Vert D^2h_{ij}\Vert_{\infty}^2+\Vert D^2f_0\Vert_{\infty}^2)\big ( e^{3\frac{C_G}{N}(T-s)}-1
    \big ) \nonumber \\
    & \leq e^{3\frac{C_G}{N}(T-s)}E_Q^N(T,{\bf m_T})\\
    & \qquad+ \frac{(C_G+\Vert Df_0\Vert_{\infty})^2}{N}(\frac{4}{N^2}\sum_{ij}\Vert D^2h_{ij}\Vert_{\infty}^2+\Vert D^2f_0\Vert_{\infty}^2)\big ( e^{3\frac{C_G}{N}(T-s)}-1
    \big )
    \,. 
\end{align*}

Now, recall that for all $s \in [t,T]$
\[
{\bf m_s} = ( \mathcal{L}(\check{X}^1_{s}), \dots, \mathcal{L}(\check{X}^N_{s}) )\,.
\]
The drift coefficient $\check a^{\mathcal{V}}$ is $C_G-$Lipschitz according to Lemma \ref{lem:Lip_aVcal} and is bounded (as shown before) so a fortiori of linear growth. Moreover, it satisfies the inequality in Lemma \ref{ineq:b_holder}. Therefore, it satisfies the conditions of Lemma $4.12$ of \cite{jackson2023approximatelyoptimaldistributedstochastic}. \\

Thus, if ${\bf m_t}$ satisfies the Poincaré inequality with some constant $c$, then,  for $t \leq s \leq T$,  
${\bf m_s}$ satisfies a Poincaré inequality with constant  
$$
\frac{e^{2C_G (s-t)} - 1}{2C_G} + c e^{2C_G (s-t)}\,.
$$

By assumption ${\bf m_t}=\mathcal{L}({\bf\check{X}_{t}})=\boldsymbol{\mu}$ satisfies the Poincaré inequality with some constant $c_p$. Therefore, $\mathcal{L}({\bf \check X_T)={\bf m_T}}$ satisfies the Poincaré inequality with constant $
\frac{e^{2C_G (T-t)} - 1}{2C_G} + c e^{2C_G (T-t)}\,,
$ so a fortiori with constant $C_p:=\frac{e^{2C_G T} - 1}{2C_G} + c_p e^{2C_G T}\,.$
 Thus, noticing that for all $i \in \{1,...,N\}$, $D_ig^N$ is a bounded Lipschitz function, we deduce that
\begin{align*}
E_Q^N(T,{\bf m_T}) &= N \sum_{i=1}^{N} \mathbb{E} \left[ |D_i g^N({\bf \check{X}_T})|^2 - \left| \mathbb{E} [D_i g^N({\bf \check{X}_T}) | \check{X}^i_{T}] \right|^2 \right]\\
&\leq NC_p \mathbb{E} \sum_{ij} |D_{ij} 
 g^N({\bf \check{X}_T})|^2\\
 & \leq NC_p \sum_{i, j } \Vert D_{ij} 
 g^N\Vert ^2_{\infty}\,.
\end{align*}
\end{proof}

\subsection{Proof of Theorem \ref{thm:diff_V_O}}
We are now ready to prove Theorem \ref{thm:diff_V_O}.
\begin{proof}
By Lemma \ref{lem:diffvalfx}, it suffices to analyse $$\int_t^T E^N(s, {\bf m_s})\,,$$ for all $t \in [0,t]\,.$ 
Recalling the definition of $E_1^N$ and $E_2^N$, respectively from \eqref{def:E1} and \eqref{def:E2}, we get
\begin{align*}
    \int_t^T E^N(s, {\bf m_s}) \, ds &=\int_t^T \big (E_1^N(s,{\bf m_s})+E_2^N(s,{\bf m_s})\big )ds \,,
\end{align*}
where
$$
\int_t^T E_1^N(s,{\bf m_s})ds \leq (T-t)K_1\,,
$$
and, by the Proposition \ref{prop:EQ_mT}, we know that, for all $s \in [0,T]\,,$
\begin{align*}
    E_Q^N(s,{\bf m_s}) &\leq e^{3\frac{C_G}{N}(T-s)}E_Q^N(T,{\bf m_T})\\
    & \qquad +\frac{(C_G+\Vert Df_0\Vert_{\infty})^2}{N}(\frac{4}{N^2}\sum_{ij}\Vert D^2h_{ij}\Vert_{\infty}^2+\Vert D^2f_0\Vert_{\infty}^2)\big ( e^{3\frac{C_G}{N}(T-s)}-1
    \big )\,.
\end{align*}

Therefore, we also have, for all $s \in [0,T]\,,$:
\begin{align*}
    \sqrt{E_Q^N(s,{\bf m_s})} &\leq e^{\frac{3}{2}\frac{C_G}{N}(T-s)}\sqrt{E_Q^N(T,{\bf m_T})}\\
    & \qquad + \sqrt{\frac{(C_G+\Vert Df_0\Vert_{\infty})^2}{N}(\frac{4}{N^2}\sum_{ij}\Vert D^2h_{ij}\Vert_{\infty}^2+\Vert D^2f_0\Vert_{\infty}^2)\big ( e^{3\frac{C_G}{N}(T-s)}-1
    \big )}
    \,. 
\end{align*}

Thus, we deduce that
\begin{align*}
    &\int_t^T E_2^N(s,{\bf m_s})ds \\
    &\leq \int_t^T\big (E_Q^N(s,{\bf m_s})+2(C_G +\Vert Df_0\Vert_\infty)\sqrt{E_Q^N(s,{\bf m_s})}\big)ds\\
    &\leq \int_t^Te^{\frac{3C_G}{2N}(T-s)}\sqrt{NC_P\sum_{ij}\Vert D_{ij}g^N\Vert_{\infty}^2}\biggl(2C_G+e^{\frac{3C_G}{2N}(T-s)}\sqrt{NC_P\sum_{ij}\Vert D_{ij}g^N\Vert_{\infty}^2}\biggr)ds\\
    &+\int_t^T\frac{(C_G +\Vert Df_0\Vert_\infty)^2}{\sqrt{N}}\sqrt{(\frac{4}{N^2}\sum_{ij}\Vert D^2h_{ij}\Vert_{\infty}^2+\Vert D^2f_0\Vert_{\infty}^2)\big ( e^{3\frac{C_G}{N}(T-s)}-1
    \big )}\\
    & \qquad \times\big (2+\sqrt{\frac{1}{N}(\frac{4}{N^2}\sum_{ij}\Vert D^2h_{ij}\Vert_{\infty}^2+\Vert D^2f_0\Vert_{\infty}^2)\big ( e^{3\frac{C_G}{N}(T-s)}-1
    \big )}\big )ds \\
     &\leq (T-t)e^{\frac{3C_G}{2N}(T-t)}\sqrt{NC_P\sum_{ij}\Vert D_{ij}g^N\Vert_{\infty}^2}\biggl(2C_G+e^{\frac{3C_G}{2N}(T-t)}\sqrt{NC_P\sum_{ij}\Vert D_{ij}g^N\Vert_{\infty}^2}\biggr)\\
     &+(T-t)\frac{(C_G +\Vert Df_0\Vert_\infty)^2}{\sqrt{N}}\sqrt{(\frac{4}{N^2}\sum_{ij}\Vert D^2h_{ij}\Vert_{\infty}^2+\Vert D^2f_0\Vert_{\infty}^2)\big ( e^{3\frac{C_G}{N}(T-t)}-1
    \big )}\\
    & \qquad \times\big (2+\sqrt{\frac{1}{N}(\frac{4}{N^2}\sum_{ij}\Vert D^2h_{ij}\Vert_{\infty}^2+\Vert D^2f_0\Vert_{\infty}^2)\big ( e^{3\frac{C_G}{N}(T-t)}-1
    \big )}\big )\,.
\end{align*}
Therefore, $$\int_t^T E^N(s,{\bf m_s})ds \leq (T-t)\big (K_f(t)+K_g(t)\big )\,,
$$
where $K_f$ and $K_G$ are defined in Theorem \ref{thm:diff_V_O}.
\end{proof}

\bibliographystyle{alpha}
\bibliography{bib_control}

\end{document}